\documentclass[11pt,letterpaper,reqno]{amsart}

\usepackage[T1]{fontenc}
\usepackage{amsmath,amssymb,amsthm,amsfonts}
\usepackage{mathrsfs}
\usepackage{booktabs}
\usepackage{enumitem}
\usepackage{tikz}
\usepackage{hyperref}
\usepackage[capitalize,noabbrev]{cleveref}

\hypersetup{
  pdfstartview={FitH},
  colorlinks=true,
  linkcolor=blue,
  citecolor=blue,
  urlcolor=blue,
  pdftitle={Bernstein Transfers and Greedy Records for Fence and Circular-Fence Order Polynomials},
  pdfauthor={Pyuyi Chufeng Huang},
  pdfsubject={Enumerative combinatorics and poset theory},
  pdfkeywords={order polynomial, fence poset, permutation statistic,
    right-to-left record, record set, Bernstein basis, explicit bijection,
    quasisymmetric function, circular fence}
}

\newtheorem{theorem}{Theorem}[section]
\newtheorem{lemma}[theorem]{Lemma}
\newtheorem{proposition}[theorem]{Proposition}
\newtheorem{corollary}[theorem]{Corollary}
\theoremstyle{definition}
\newtheorem{definition}[theorem]{Definition}
\newtheorem{example}[theorem]{Example}
\newtheorem{remark}[theorem]{Remark}

\DeclareMathOperator{\rec}{rec}
\DeclareMathOperator{\zzrec}{zzrec}
\newcommand{\dd}{\,\mathrm{d}}
\newcommand{\eps}{\varepsilon}

\title[Bernstein transfers and greedy records]
{Bernstein Transfers and Greedy Records for
Fence and Circular-Fence Order Polynomials}

\author{Pyuyi Chufeng Huang}
\address{School of Mathematics, Sichuan University, Chengdu 610064, China}
\email{pyuyi233@gmail.com}

\subjclass[2020]{Primary 05A15, 05A19; Secondary 05E05, 06A07}
\keywords{Order polynomial, fence poset, permutation statistic, right-to-left
record, record set, Bernstein basis, explicit bijection, quasisymmetric
function, circular fence}

\begin{document}

\begin{abstract}
Let \(P_\eps\) be the fence poset associated with an orientation \(\eps\in\{+,-\}^{n-1}\) of a path.  We define a greedy right-to-left record statistic \(\rec_\eps\) on \(S_n\) and prove
\[
\sum_{\pi\in S_n}t^{\rec_\eps(\pi)}=n!\Omega(P_\eps;t),
\]
by a Bernstein-basis transfer between a continuous threshold recurrence and endpoint-refined order-preserving maps.  Under reflection, this statistic agrees pointwise with Kahane's independently obtained greedy block statistic; the alternating specialization gives the zig-zag case posed by Ferroni, Morales, and Panova.  A finite form of the transfer yields a direct recursive bijection for \(m\le n\), extended algorithmically to arbitrary alphabets. Refining by record set, direction, and terminal value identifies fixed fibers with decorated endpoint paths and pointed linear extensions of posets whose cover graphs are caterpillars.  We also define cyclic records for every nonconstant orientation \(\eta\) of a cycle and prove
\[
\sum_{\pi\in S_n}t^{\operatorname{crec}_\eta(\pi)}
=n!\Omega(C_\eta;t).
\]
When the Hasse diagram is a cycle, reflection identifies these records with Kahane's circular blocks and establishes his circular-fence conjecture.
\end{abstract}

\maketitle

\section{Introduction}

Let \(P\) be a finite poset.  Its order polynomial \(\Omega(P;t)\) is
characterized by
\[
\Omega(P;m)=\#\{f:P\to[m]\text{ order-preserving}\}
\]
for all positive integers \(m\); see \cite[Chapter~3]{StanleyEC1}.  Order
polynomials are closely tied to order polytopes
\cite{StanleyTwoPosetPolytopes} and hence to Ehrhart theory.  Ferroni,
Morales, and Panova proved coefficient nonnegativity for skew-shape cell posets,
hence for fences, and separately for circular fences
\cite[Corollary~5.1 and Theorem~7.7]{FMP}.  For the alternating, or zig-zag,
fence \(Z_n\), they asked for a direct permutation-statistic interpretation of
\(n!\Omega(Z_n;t)\) \cite[Problem~5.3]{FMP}.

Independently, Kahane introduced a greedy block statistic and proved the
corresponding identity for every fence; he also obtained a lower bound for the
linear coefficient of order-polytope Ehrhart polynomials and a coefficient
interpretation for Ehrhart polynomials of Schubert-matroid base polytopes
\cite[Sections~5 and 6]{Kahane}.  After reflection,
\Cref{prop:kahane-path-comparison} identifies the statistic used here
pointwise with Kahane's.  The additional path results are the Bernstein
transfer, its finite and bijective forms, and the fixed-fiber refinements
developed below.  The cyclic construction is separate and gives a record
formula for every nonconstant orientation of a cycle.

We work with an arbitrary orientation
\(\eps=(\eps_1,\ldots,\eps_{n-1})\in\{+,-\}^{n-1}\) of a path.  Let
\(P_\eps\) be the transitive closure of
\[
\eps_i=+ \Longrightarrow x_i\succ x_{i+1},
\qquad
\eps_i=- \Longrightarrow x_i\prec x_{i+1}
\]
on \(\{x_1,\ldots,x_n\}\).  Thus \(Z_n=P_{\eps^{\mathrm{zz}}}\), where
\(\eps_i^{\mathrm{zz}}=+\) for odd \(i\) and \(\eps_i^{\mathrm{zz}}=-\) for
even \(i\).

For \(\pi=\pi_1\cdots\pi_n\in S_n\), the statistic starts with threshold
\(\pi_n\) and scans from right to left.  At a \(+\)-edge it records a value
larger than the current threshold, and at a \(-\)-edge a value smaller than
the threshold; every record resets the threshold.  The total number of
records is \(\rec_\eps(\pi)\).  A precise definition and an equivalent
greedy-chain formulation are given in
\Cref{def:records-chain,prop:chain-threshold-equivalence}.

\begin{theorem}\label{thm:main-general}
For every \(n\ge1\) and every \(\eps\in\{+,-\}^{n-1}\),
\[
\sum_{\pi\in S_n} t^{\rec_\eps(\pi)}
=
n!\,\Omega(P_\eps;t).
\]
\end{theorem}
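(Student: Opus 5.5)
The plan follows the route the abstract announces: a Bernstein-basis transfer between a continuous threshold recurrence on the permutation side and endpoint-refined counts of order-preserving maps on the poset side.

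\textbf{Permutation side.} Replace a uniform random \(\pi\in S_n\) by i.i.d.\ uniform variables \(U_1,\dots,U_n\in[0,1]\); ties have probability zero, and \(\rec_\eps\) depends only on relative order. Scanning right to left, the state after processing position \(i\) is the current threshold \(s\in[0,1]\). Define \(F_k(s;t)\) as the conditional generating function for records among \(U_1,\dots,U_k\), given that the threshold entering position \(k\) is \(s\). Since \(U_k\) is independent of the threshold, the definition gives the recurrence
\[
F_k(s)=\int_s^1 t\,F_{k-1}(u)\,\mathrm{d}u+s\,F_{k-1}(s)\qquad(\eps_k=+),
\]
and the mirror-image recurrence, with \(\int_0^s\) and factor \(1-s\), when \(\eps_k=-\). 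A non-record leaves the threshold at \(s\). The initial threshold is \(U_n\), so
\[
\frac1{n!}\sum_\pi t^{\rec_\eps(\pi)}=\int_0^1 t^{c}\,F_{n-1}(u)\,\mathrm{d}u,
\]
where \(c\in\{0,1\}\) is whatever the convention in \Cref{def:records-chain} assigns to the last letter. By induction each \(F_k(s;t)\) is a polynomial in \(s\) of degree at most \(k\) with coefficients in \(\mathbb{Q}[t]\).

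\textbf{Poset side.} For integers \(m\) and \(1\le j\le m\), let \(N_k(m,j)\) be the number of order-preserving maps on the restriction of \(P_\eps\) to \(x_1,\dots,x_k\) into \([m]\) whose value at \(x_k\) is \(j\). These satisfy a transfer recurrence by summing over the allowed values at the new endpoint: \(j'\ge j\) or \(j'\le j\) according to the edge sign. The key move is to write \(F_k(s;t)\) in the Bernstein basis \(b_{k,i}(s)=\binom ki s^i(1-s)^{k-i}\). The aim is to show that its Bernstein coefficients, viewed as functions of \(t\), are Lagrange-type polynomials that interpolate \(N_k(m,\cdot)\) at \(t=m\). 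Equivalently, the target is an identity of the form
\[
F_k(s;m)=\sum_{j} N_k(m,j)\,\beta_{m,j}(s)
\]
for a fixed kernel \(\beta_{m,j}\). The natural candidate kernel is
\[
\beta_{m,j}(s)=\binom{m-1}{j-1}s^{j-1}(1-s)^{m-j}\cdot(\text{normalization}),
\]
and the identity would be checked by verifying that the kernel intertwines the two recurrences. Concretely, I would show that \(\int_s^1 m\,\beta_{m,j'}\) plus \(s\beta_{m,j}\) reproduces the discrete partial sums, using \(\int_s^1 b_{m-1,j-1}=\tfrac1m\sum_{i<j}b_{m,i}\), the standard Bernstein integration identity. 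The factor \(t\) on records then matches the factor \(m\) produced by integration.

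\textbf{Conclusion.} Integrating the identity against \(\mathrm{d}u\) gives \(\int_0^1 F_{n-1}(u;m)\,\mathrm{d}u\propto \sum_j N_n(m,j)=\Omega(P_\eps;m)\). Both sides are polynomials in \(t\), so agreement for every positive integer \(m\) yields \Cref{thm:main-general}. The endpoint contribution \(t^c\) must be reconciled with the final sum over \(j\).

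\textbf{Main obstacle.} The hardest part is finding the exact kernel and normalization so that the intertwining holds simultaneously for \(+\)- and \(-\)-edges, including the handling of equal values (\(j'=j\)) versus the strict comparisons on the continuous side. I would first test \(n=2,3\) to fix the scaling. If a fixed-\(m\) kernel fails, I would fall back on degree-elevated Bernstein coefficients, proving polynomial identity in \(t\) directly by comparing Bernstein coefficients of degree \(k\) in \(s\).
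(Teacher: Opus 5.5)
Your plan is the paper's proof: \(F_k(s;m)\) is its \(H_w^{(m)}\) with \(w=\eps_k\cdots\eps_1\), your target identity is its transfer lemma, and that lemma is proved by the intertwining \(s\beta_{m,\ell}(s)+m\int_s^1\beta_{m,\ell}(u)\dd u=\sum_{j\le\ell}\beta_{m,j}(s)\) (with its mirror for \(-\)), followed by \(\int_0^1\beta_{m,j}=1/m\). No normalization is needed, but the identity should read \(F_k(s;m)=\sum_j N_{k+1}(m,j)\beta_{m,j}(s)\) because the threshold plays the role of \(x_{k+1}\) (check \(k=0\)), and \(c=1\) because position \(n\) is always a record, so the terminal factor \(m\) cancels the \(1/m\) exactly.
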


For alternating signs, \Cref{thm:main-general} specializes to the zig-zag case
of \cite[Problem~5.3]{FMP}.  When all signs are \(+\) or all are \(-\), it
specializes to the classical right-to-left maximum or minimum statistic.

The relation with Kahane's statistic is pointwise: if
\[
\delta_j=-\eps_{n-j},
\qquad
\pi^{\mathrm{rev}}_j=\pi_{n+1-j},
\]
then \Cref{prop:kahane-path-comparison} gives
\[
\rec_\eps(\pi)
=\operatorname{bl}_{P_\delta}(\pi^{\mathrm{rev}}).
\]
Thus greedy records and block roots are two descriptions of the same
statistic under reflection.

Beyond the path generating identity, the results developed here are as
follows.
\begin{enumerate}[label=(\roman*)]
\item For every nonconstant orientation \(\eta\) of a cycle, a canonical root
chosen from the labeled cycle defines a cyclic record statistic for which
\[
\sum_{\pi\in S_n}t^{\operatorname{crec}_\eta(\pi)}
=n!\,\Omega(C_\eta;t).
\]
When the undirected Hasse diagram is a cycle, reflection identifies the cyclic
records with Kahane's circular block roots and proves his circular-fence
conjecture \cite[Conjecture~4.8]{Kahane}.
\item The Bernstein recurrence lifts the path identity to endpoint-refined
finite transfers and a direct recursive bijection for \(m\le n\), with an
algorithmic extension to arbitrary alphabets:
\begin{multline*}
S_n\times\{f:P_\eps\to[m]\text{ order-preserving}\}
\\[-2pt]
\longleftrightarrow\ 
\{(\pi,\lambda):\pi\in S_n,\ 
  \lambda:\operatorname{Rec}_\eps(\pi)\to[m]\}.
\end{multline*}
\item Refining by the complete record set, record directions, and terminal
value identifies every fixed fiber with decorated endpoint paths and with
pointed linear extensions of a record poset \(Q_{\eps,R}\), whose cover graph
is a caterpillar.  Atkinson's recursion for posets whose cover graph is a tree
\cite{AtkinsonTree}, specialized to record-gap coordinates, then gives the
terminal-value spectrum.  Standard \(P\)-partition theory supplies a further
quasisymmetric consequence.
\end{enumerate}

Related rank-polynomial work concerns fences \cite{MGO,KOR} and their circular
or loop analogues \cite{Chainlink,KLL}; rank-matrix models appear in
\cite{KantarciOguz}.  These polynomials enumerate lower ideals by cardinality.
By contrast, work on \(h^\ast\)-polynomials of zig-zag order and chain
polytopes \cite{CZ,CS} and on zig-zag Eulerian polynomials \cite{PZ} encodes
descent data.  Here the transfer tracks maps to arbitrary chains together with
their endpoint and complete record-set data.

After defining the statistic and comparing it pointwise with Kahane's blocks,
we prove the Bernstein transfer and the cyclic identity.  We then develop the
finite bijection, the record-set refinements, and their record-poset
consequences.

\section{Greedy records for oriented paths}\label{sec:greedy-paths}

\begin{definition}\label[definition]{def:general-fence}
Let \(n\ge1\) and \(\eps\in\{+,-\}^{n-1}\).  The \(\eps\)-fence \(P_\eps\) is
the poset on \(\{x_1,\ldots,x_n\}\) obtained as the transitive closure of the
relations
\[
\eps_i=+ \Longrightarrow x_i\succ x_{i+1},
\qquad
\eps_i=- \Longrightarrow x_i\prec x_{i+1}
\]
for \(1\le i\le n-1\).  Equivalently, its order-preserving maps to a chain are
exactly the maps satisfying
\[
\eps_i=+ \Longrightarrow f(x_i)\ge f(x_{i+1}),
\qquad
\eps_i=- \Longrightarrow f(x_i)\le f(x_{i+1})
\]
for \(1\le i\le n-1\).
\end{definition}

\begin{figure}[t]
\centering
\begin{tikzpicture}[x=.8cm,y=.58cm,baseline=(current bounding box.center),
  every node/.style={circle,draw,inner sep=1.4pt,font=\scriptsize}]
  \node (z1) at (0,1) {$x_1$};
  \node (z2) at (1,0) {$x_2$};
  \node (z3) at (2,1) {$x_3$};
  \node (z4) at (3,0) {$x_4$};
  \node (z5) at (4,1) {$x_5$};
  \node (z6) at (5,0) {$x_6$};
  \draw (z1)--(z2)--(z3)--(z4)--(z5)--(z6);
  \node[draw=none,rectangle] at (2.5,-.85)
    {$Z_6=P_{(+,-,+,-,+)}$};

  \node (p1) at (7,1) {$x_1$};
  \node (p2) at (8,0) {$x_2$};
  \node (p3) at (9,1) {$x_3$};
  \node (p4) at (10,2) {$x_4$};
  \node (p5) at (11,1) {$x_5$};
  \node (p6) at (12,0) {$x_6$};
  \draw (p1)--(p2)--(p3)--(p4)--(p5)--(p6);
  \node[draw=none,rectangle] at (9.5,-.85)
    {$P_{(+,-,-,+,+)}$};
\end{tikzpicture}
\caption{Two fence posets.  A \(+\)-edge is directed downward from left to
right, and a \(-\)-edge upward.}
\label{fig:fence-examples}
\end{figure}

\begin{definition}\label[definition]{def:records-chain}
Let \(\pi=\pi_1\cdots\pi_n\in S_n\).  The greedy \(\eps\)-record sequence of
\(\pi\) is the decreasing sequence of positions $i_0>i_1>\cdots>i_s$ constructed as follows.  Start with \(i_0=n\).  Once \(i_j\) has been chosen,
form
\[
A_j(\pi)=
\left\{i<i_j:
\begin{array}{ll}
\pi_i>\pi_{i_j}, & \text{if }\eps_i=+,\\
\pi_i<\pi_{i_j}, & \text{if }\eps_i=-
\end{array}
\right\}.
\]
If \(A_j(\pi)=\varnothing\), the construction stops. Otherwise, set $i_{j+1}=\max A_j(\pi)$. The positions \(i_0,i_1,\ldots,i_s\) are the \(\eps\)-records of \(\pi\).  Set
\[
\operatorname{Rec}_\eps(\pi)=\{i_0,i_1,\ldots,i_s\}.
\]
The record number is
\[
\rec_\eps(\pi)=|\operatorname{Rec}_\eps(\pi)|=s+1.
\]
We also set
\[
\rec_\eps^+(\pi)=\#\{1\le j\le s:\eps_{i_j}=+\},
\qquad
\rec_\eps^-(\pi)=\#\{1\le j\le s:\eps_{i_j}=-\}.
\]
Thus
\[
\rec_\eps(\pi)=1+\rec_\eps^+(\pi)+\rec_\eps^-(\pi),
\]
where the additional \(1\) comes from the terminal record at position \(n\),
which has no incoming sign.
\end{definition}

The greedy-chain definition is equivalent to the following threshold scan.

\begin{proposition}\label[proposition]{prop:chain-threshold-equivalence}
The sequence in \Cref{def:records-chain} is obtained by the following
right-to-left threshold scan.  Declare \(n\) to be a record and set
\(h=\pi_n\).  For \(i=n-1,n-2,\ldots,1\), declare \(i\) to be a record exactly
when \(\pi_i>h\) if \(\eps_i=+\), and exactly when \(\pi_i<h\) if
\(\eps_i=-\).  Whenever a record is declared, replace \(h\) by \(\pi_i\).
\end{proposition}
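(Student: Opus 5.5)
We prove that the threshold scan and the greedy chain produce the same set of positions, in the same decreasing order, by induction on the number of records found. Let \(r_0>r_1>\cdots\) be the positions declared records by the scan, and let \(h_k=\pi_{r_k}\) be the value of the threshold immediately after \(r_k\) is declared. The claim is \(r_j=i_j\) for every \(j\), and that the scan declares exactly \(s+1\) records. The base case is immediate: both procedures start at position \(n\) with threshold \(\pi_n\), so \(r_0=i_0=n\) and \(h_0=\pi_{i_0}\).

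For the inductive step, suppose \(r_j=i_j\). Then the threshold is \(h=\pi_{i_j}\) when the scan moves on to position \(i_j-1\). Since the threshold changes only when a record is declared, it stays equal to \(\pi_{i_j}\) for every position \(i<i_j\) examined before the next record. At such a position \(i\), the scan's test is \(\pi_i>\pi_{i_j}\) if \(\eps_i=+\) and \(\pi_i<\pi_{i_j}\) if \(\eps_i=-\). This is exactly the condition \(i\in A_j(\pi)\).

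Because the scan moves through \(i=i_j-1,i_j-2,\ldots\) in decreasing order, the first position it declares a record is the largest element of \(A_j(\pi)\), namely \(i_{j+1}\). If \(A_j(\pi)=\varnothing\), the scan declares no further records, and the greedy construction also stops at this point. In either case \(r_{j+1}=i_{j+1}\) (or both procedures terminate together), and the threshold is reset to \(\pi_{i_{j+1}}\), so the induction hypothesis holds again.

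The only point needing care is that the positions skipped between \(i_j\) and \(i_{j+1}\) do not alter the threshold. This holds because the threshold is updated only at declared records, and no position strictly between \(i_{j+1}\) and \(i_j\) passes the test, since each lies outside \(A_j(\pi)\) by the maximality of \(i_{j+1}\). It follows that \(\operatorname{Rec}_\eps(\pi)\) coincides with the set of records declared by the scan, and the signed counts \(\rec^\pm_\eps\) coincide as well.
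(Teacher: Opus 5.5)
Your proof is correct and follows the paper's argument: by induction on \(j\), the scan threshold stays at \(\pi_{i_j}\) until the next record, so the scan's test at positions \(i<i_j\) is exactly membership in \(A_j(\pi)\). The first such position met from the right is therefore \(\max A_j(\pi)\), both procedures stop together when \(A_j(\pi)=\varnothing\), and your remark about skipped positions only makes explicit what the paper leaves implicit.
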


\begin{proof}
After \(i_j\) is chosen, the scan threshold is \(\pi_{i_j}\).  If
\(A_j(\pi)\ne\varnothing\), its largest element is the first position met from
the right that satisfies the required comparison, and both constructions
reset to its value.  If \(A_j(\pi)=\varnothing\), both stop.  Induction on
\(j\) proves the claim.
\end{proof}

\begin{proposition}[Reflection to the block statistic]
\label[proposition]{prop:kahane-path-comparison}
For \(\eps\in\{+,-\}^{n-1}\), define
\[
\delta_j=-\eps_{n-j}\qquad(1\le j\le n-1),
\qquad
\pi^{\mathrm{rev}}_j=\pi_{n+1-j}\qquad(1\le j\le n).
\]
Then \(P_\delta\) is the reflection of \(P_\eps\), and
\[
\rec_\eps(\pi)
=
\operatorname{bl}_{P_\delta}(\pi^{\mathrm{rev}}),
\]
where \(\operatorname{bl}\) is Kahane's greedy block statistic
\cite[Definition~3.5 and Proposition~3.4]{Kahane}.  More precisely, position
\(i\) is an \(\eps\)-record of \(\pi\) if and only if reflected position
\(n+1-i\) is the root of a block of \((P_\delta,\pi^{\mathrm{rev}})\).
\end{proposition}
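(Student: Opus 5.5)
We prove this by unwinding both definitions and checking that the reflection matches them step by step. The first claim is immediate from \Cref{def:general-fence}. Under the relabeling \(y_j=x_{n+1-j}\), the edge \(\{x_i,x_{i+1}\}\) becomes \(\{y_{n-i},y_{n+1-i}\}\), with \(j=n-i\). If \(\eps_i=+\) then \(x_i\succ x_{i+1}\), that is, \(y_{j+1}\succ y_j\). This is exactly the relation imposed by \(\delta_j=-\). The case \(\eps_i=-\) is symmetric, so \(P_\delta\) is \(P_\eps\) read backwards. Also \(\pi^{\mathrm{rev}}\) assigns the value \(\pi_i\) to position \(n+1-i\). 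Hence the pair \((P_\delta,\pi^{\mathrm{rev}})\) is the pair \((P_\eps,\pi)\) read in the reversed direction.

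Next we recall Kahane's greedy block decomposition \cite[Definition~3.5 and Proposition~3.4]{Kahane} in a left-to-right form. Blocks are built greedily starting from position \(1\), and each block has a distinguished root. A new block begins at position \(k+1\) precisely when \(\pi^{\mathrm{rev}}_{k+1}\) compares with the current root value in the direction prescribed by the edge \(\delta_k\). When that happens, the new position becomes the root, and every other position joins the current block. We plan to state this characterization explicitly as a lemma, since it is the form used by the threshold scan. We will then apply \Cref{prop:chain-threshold-equivalence}: our records are produced by a right-to-left scan whose threshold \(h\) resets at each record. Reflected, that scan is a left-to-right scan starting at position \(1\), which is the root of the first block, just as \(n\) is always a record.

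The core step is an induction along the scan. The inductive claim is that after processing original positions \(n,\dots,i+1\), equivalently reflected positions \(1,\dots,n-i\), the threshold \(h\) equals the value at the current block root, and the set of records reflects to the set of roots found so far. For the inductive step, let \(j=n-i\). Position \(i\) is a record iff \(\pi_i>h\) when \(\eps_i=+\), or \(\pi_i<h\) when \(\eps_i=-\). In reflected terms, position \(j+1\) carries value \(\pi_i\), and it opens a new block iff it exceeds the root value across a \(\delta_j=-\) edge, or falls below the root value across a \(\delta_j=+\) edge. This requires checking which comparison Kahane attaches to each edge orientation. The two conditions coincide, and both procedures update to the value \(\pi_i\) in exactly the same case. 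Hence the record set reflects to the root set, and taking cardinalities gives \(\rec_\eps(\pi)=\operatorname{bl}_{P_\delta}(\pi^{\mathrm{rev}})\).

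The main obstacle is the faithful translation of Kahane's definition. His blocks may be phrased as maximal greedy chains or through a non-scanning rule, and his convention for which comparison opens a block may be either up-versus-down or the opposite. We will first derive the one-step "new root iff comparison with current root" rule from his Definition~3.5 and Proposition~3.4. Once that is in place, we will verify it on a single example such as \(\eps=(+,-)\) to pin down the sign convention. If his convention comes out opposite to the one assumed above, the negation in \(\delta_j=-\eps_{n-j}\) is the place where that discrepancy would show up.
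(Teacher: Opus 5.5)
Your proposal is correct and takes essentially the same approach as the paper. Both relabel by \(y_j=x_{n+1-j}\) to identify \(P_\delta\) with \(P_\eps\), then match Kahane's left-to-right root-threshold rule (a new block opens at an ascent when the label is larger, and at a descent when it is smaller) with the threshold scan of \Cref{prop:chain-threshold-equivalence}, position by position. The paper simply asserts this one-step rule for Kahane's construction rather than deriving it, and it agrees with the convention you assumed, so no sign correction arises.
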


\begin{proof}
Write \(y_j=x_{n+1-j}\).  The edge between \(y_j\) and \(y_{j+1}\) has sign
\(\delta_j=-\eps_{n-j}\), so the relabeling \(y_j\leftrightarrow x_{n+1-j}\)
identifies \(P_\delta\) with \(P_\eps\).

Kahane's greedy construction reads the reflected fence from left to right and
keeps the label of the leftmost vertex of the current block as its root
threshold.  At a reflected ascent, a new block begins exactly when the new
label is larger than that threshold; at a reflected descent, it begins exactly
when the new label is smaller.  For \(j\ge2\), put \(i=n+1-j\).  The vertex
\(y_j\) is an ascent when \(\delta_{j-1}=-\), equivalently when
\(\eps_i=+\), and it is a descent when \(\delta_{j-1}=+\), equivalently when
\(\eps_i=-\).  Hence a new block begins at \(j\) exactly when
\[
\eps_i=+\text{ and }\pi_i>h,
\qquad\text{or}\qquad
\eps_i=-\text{ and }\pi_i<h,
\]
where \(h\) is the current root label.  These are precisely the two record
conditions in \Cref{prop:chain-threshold-equivalence}.  Both scans start at the
same reflected endpoint and reset the threshold at the same positions, which
proves the pointwise correspondence and the equality of the two statistics.
\end{proof}

\begin{example}\label[example]{ex:41532}
For the zig-zag sign sequence
\(\eps^{\mathrm{zz}}\) of length \(n-1\), given by
\(\eps_i^{\mathrm{zz}}=+\) for odd \(i\) and \(\eps_i^{\mathrm{zz}}=-\) for
even \(i\), and for \(\pi=41532\), the greedy record sequence is
\[
5\longrightarrow 3\longrightarrow 2\longrightarrow 1.
\]
Starting from position \(5\), whose value is \(2\), position \(4\) is even and
therefore has sign \(-\), so it would have to satisfy \(\pi_4<2\); but
\(\pi_4=3\).  The nearest admissible position is therefore \(3\), since
\(\pi_3=5>2\).  From position \(3\), the next admissible position is \(2\), and
from position \(2\), the next admissible position is \(1\).  Hence
\(\rec_{\eps^{\mathrm{zz}}}(41532)=4\), which is \(\zzrec(41532)\) in the
notation used below.
\end{example}

\section{The Bernstein transfer proof}\label{sec:bernstein-proof}

The proof of \Cref{thm:main-general} compares two explicit models for the same
right-to-left process.  The first model uses polynomial recurrences
\(H_w^{(m)}(x)\in\mathbb Q[x]\) derived from a continuous record process.  The
second model uses endpoint-refined order-preserving maps.  The bridge between
the two is the Bernstein basis.

Let \(w=w_1\cdots w_r\in\{+,-\}^r\) be a word.  The letter \(w_1\) is the first
comparison made with the current right endpoint, \(w_2\) is the next comparison
after moving one step to the left, and so on.

\begin{definition}\label[definition]{def:endpoint-counts}
Fix \(m\ge1\).  For \(1\le k\le m\), define endpoint-refined counts
\(C_w^{(m)}(k)\) recursively by $C_\varnothing^{(m)}(k)=1$, and for a word \(v\),
\[
C_{+v}^{(m)}(k)=\sum_{\ell=k}^{m} C_v^{(m)}(\ell),
\qquad
C_{-v}^{(m)}(k)=\sum_{\ell=1}^{k} C_v^{(m)}(\ell).
\]
Equivalently, \(C_w^{(m)}(k)\) counts sequences
\((g_0,g_1,\ldots,g_r)\in[m]^{r+1}\) with \(g_0=k\) such that
\[
w_j=+ \Longrightarrow g_j\ge g_{j-1},
\qquad
w_j=- \Longrightarrow g_j\le g_{j-1}
\]
for \(1\le j\le r\).
\end{definition}

\begin{lemma}[Endpoint-refined order polynomials]\label[lemma]{lem:endpoint-order}
Let \(\eps\in\{+,-\}^{n-1}\), and put $w=\eps_{n-1}\eps_{n-2}\cdots\eps_1$. For \(1\le k\le m\), the number \(C_w^{(m)}(k)\) counts order-preserving maps
\(f:P_\eps\to[m]\) with \(f(x_n)=k\).  Consequently,
\[
\Omega(P_\eps;m)=\sum_{k=1}^m C_w^{(m)}(k).
\]
\end{lemma}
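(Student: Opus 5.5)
The plan is to reduce \Cref{lem:endpoint-order} to the sequence description of \(C_w^{(m)}(k)\) in \Cref{def:endpoint-counts}, via the right-to-left reindexing \(g_j=f(x_{n-j})\). So I first check that the recursive definition and the sequence description agree. Induction on the length of \(w\) does this. For \(w=\varnothing\) there is exactly one sequence \((g_0)=(k)\). For \(w=+v\), split the sequences counted for \(w\) according to \(g_1=\ell\). The condition \(g_1\ge g_0=k\) forces \(\ell\in\{k,\ldots,m\}\). The remaining tail \((g_1,\ldots,g_r)\) is then exactly a sequence counted by \(C_v^{(m)}(\ell)\), because the constraints for positions \(2,\ldots,r\) of \(w\) are the constraints for \(v\), shifted by one index. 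Summing over \(\ell\) gives the recursion. The case \(w=-v\) is identical with \(\ell\in\{1,\ldots,k\}\).

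Next, given \(f:P_\eps\to[m]\), put \(g_j=f(x_{n-j})\) for \(0\le j\le n-1\), so that \(r=n-1\) and \(g_0=f(x_n)\). By \Cref{def:general-fence}, \(f\) is order-preserving exactly when, for every \(i\), either \(\eps_i=+\) and \(f(x_i)\ge f(x_{i+1})\), or \(\eps_i=-\) and \(f(x_i)\le f(x_{i+1})\). Set \(i=n-j\). Then \(x_i\) corresponds to \(g_j\) and \(x_{i+1}\) to \(g_{j-1}\), and the sign is \(\eps_{n-j}=w_j\) by the choice \(w=\eps_{n-1}\cdots\eps_1\). The condition therefore becomes: \(w_j=+\) implies \(g_j\ge g_{j-1}\), and \(w_j=-\) implies \(g_j\le g_{j-1}\). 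This is precisely the constraint in \Cref{def:endpoint-counts}. Hence \(f\mapsto(g_0,\ldots,g_{n-1})\) is a bijection between the order-preserving maps with \(f(x_n)=k\) and the sequences counted by \(C_w^{(m)}(k)\).

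Finally, summing over \(k=f(x_n)\in[m]\) partitions all order-preserving maps \(P_\eps\to[m]\), which gives \(\Omega(P_\eps;m)=\sum_k C_w^{(m)}(k)\). The only real obstacle is index bookkeeping. One must check that reversing \(\eps\) with no sign change is correct: the comparison is written as \(g_j\) against \(g_{j-1}\), that is, \(f(x_i)\) against \(f(x_{i+1})\), which is the same orientation as the fence relation. So no negation of signs arises, unlike in \Cref{prop:kahane-path-comparison}. The case \(n=1\) is the empty word and is trivially fine.
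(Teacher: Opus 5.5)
Your proof is correct and follows the paper's argument: the reindexing \(g_j=f(x_{n-j})\), with \(w_j=\eps_{n-j}\), turns the cover inequalities of \(P_\eps\) into the step constraints of \Cref{def:endpoint-counts}, and summing over \(k=f(x_n)\) gives the formula for \(\Omega(P_\eps;m)\). The paper only asserts, with the word ``Equivalently'' in that definition, that the recursive definition of \(C_w^{(m)}\) agrees with its sequence description. Your induction on the length of \(w\) checks this explicitly, which is a sound addition.
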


\begin{proof}
Given \(f:P_\eps\to[m]\), set \(g_j=f(x_{n-j})\) for \(0\le j\le n-1\).  Then
\(g_0=f(x_n)\).  For \(1\le j\le n-1\), the sign \(w_j\) is
\(\eps_{n-j}\).  If \(w_j=+\), then the order-preserving condition is $f(x_{n-j})\ge f(x_{n-j+1})$, or \(g_j\ge g_{j-1}\).  If \(w_j=-\), it is \(g_j\le g_{j-1}\).  These are exactly the inequalities in \Cref{def:endpoint-counts}, and the correspondence is reversible.
\end{proof}

\begin{definition}\label[definition]{def:continuous-H}
For \(m\ge1\), \(x\in[0,1]\), and a word \(w\in\{+,-\}^r\), let
\(H_w^{(m)}(x)\) be the expected value of \(m^N\) in the following process.  We
start with threshold \(x\), inspect \(r\) independent uniform random variables
in order, and use the comparison directions \(w_1,\ldots,w_r\).  Whenever the
current direction is \(+\), a new record occurs if the inspected value is larger
than the current threshold; whenever the current direction is \(-\), a new
record occurs if the inspected value is smaller than the current threshold.  On
a record, the threshold is replaced by the inspected value; otherwise it is
unchanged.  The random variable \(N\) is the number of new records among these
\(r\) inspected variables.
\end{definition}

Conditioning on the first inspected value gives the recurrences
\begin{align}
H_\varnothing^{(m)}(x)&=1, \label{eq:H-empty}\\
H_{+v}^{(m)}(x)
&=xH_v^{(m)}(x)+m\int_x^1H_v^{(m)}(u)\dd u, \label{eq:H-plus}\\
H_{-v}^{(m)}(x)
&=(1-x)H_v^{(m)}(x)+m\int_0^xH_v^{(m)}(u)\dd u. \label{eq:H-minus}
\end{align}
For example, in a \(+\)-step, a first inspected value \(u\le x\) is not a
record, keeps the threshold \(x\), and contributes \(xH_v^{(m)}(x)\).  A value
\(u>x\) is a record, contributes the factor \(m\), changes the threshold to
\(u\), and contributes \(m\int_x^1H_v^{(m)}(u)\dd u\).  The \(-\)-step is the
same argument with the intervals \([0,x]\) and \([x,1]\) interchanged.
These recurrences recursively determine \(H_w^{(m)}\) from the constant
polynomial \(1\).  Hence each \(H_w^{(m)}\) is a polynomial in \(x\), and all
integrations are ordinary integrations of polynomials.

For \(1\le k\le m\), set
\[
b_{m,k}(x)=\binom{m-1}{k-1}x^{k-1}(1-x)^{m-k}.
\]
These are the Bernstein basis polynomials of degree \(m-1\)
\cite{Farouki}, indexed so that \(k\in[m]\) matches the endpoint value in
\Cref{lem:endpoint-order}.  Probabilistically, if
\(U_1,\ldots,U_{m-1}\) are independent uniform random variables on \([0,1]\),
then
\[
b_{m,k}(x)=
\Pr\bigl(\#\{j:U_j<x\}=k-1\bigr).
\]
Thus \(b_{m,k}(x)\) is the probability that \(x\) has rank \(k\) after being
inserted among \(m-1\) auxiliary uniform points.

\begin{lemma}[Bernstein summation identities]\label[lemma]{lem:bernstein-identities}
For \(1\le\ell\le m\),
\begin{align}
x b_{m,\ell}(x)+m\int_x^1 b_{m,\ell}(u)\dd u
&=\sum_{k=1}^{\ell} b_{m,k}(x), \label{eq:bernstein-plus}\\
(1-x)b_{m,\ell}(x)+m\int_0^x b_{m,\ell}(u)\dd u
&=\sum_{k=\ell}^{m} b_{m,k}(x). \label{eq:bernstein-minus}
\end{align}
Moreover,
\[
\int_0^1 b_{m,k}(x)\dd x=\frac1m
\]
for every \(k\).
\end{lemma}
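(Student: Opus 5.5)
The plan is to prove the integral identity first, then prove \eqref{eq:bernstein-plus} by differentiating both sides and matching one boundary value, and finally get \eqref{eq:bernstein-minus} from \eqref{eq:bernstein-plus} by the reflection \(x\mapsto 1-x\).

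\emph{The integral identity.} This is the Beta integral:
\[
\int_0^1 x^{k-1}(1-x)^{m-k}\dd x=\frac{(k-1)!\,(m-k)!}{m!},
\]
and multiplying by \(\binom{m-1}{k-1}\) gives \(1/m\). Probabilistically, \(\int_0^1 b_{m,k}\) is the probability that a uniform point has rank \(k\) among \(m\) i.i.d.\ uniforms, which is \(1/m\) by exchangeability.

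\emph{Identity \eqref{eq:bernstein-plus}.} Both sides are polynomials, so it suffices to check that their derivatives agree and that they agree at one point. I will use \(x=1\). There \(b_{m,k}(1)=[k=m]\) and the integral vanishes, so the left side equals \([\ell=m]\) and the right side \(\sum_{k\le\ell}[k=m]\) also equals \([\ell=m]\).

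For the derivatives, I will use the standard formula
\[
b_{m,k}'=(m-1)\bigl(b_{m-1,k-1}-b_{m-1,k}\bigr),
\]
with the convention \(b_{m-1,0}=b_{m-1,m}=0\). The sum \(\sum_{k=1}^{\ell}b_{m,k}'\) then telescopes to \(-(m-1)\,b_{m-1,\ell}\). On the left, the derivative is
\[
b_{m,\ell}+x\,b_{m,\ell}'-m\,b_{m,\ell}=x\,b_{m,\ell}'-(m-1)\,b_{m,\ell}.
\]
Substituting the derivative formula leaves
\[
(m-1)\Bigl(x\,b_{m-1,\ell-1}-x\,b_{m-1,\ell}-b_{m,\ell}\Bigr).
\]
Using the degree-raising relation \(b_{m,\ell}=x\,b_{m-1,\ell-1}+(1-x)\,b_{m-1,\ell}\), which is Pascal's rule on binomial coefficients, this collapses to \(-(m-1)\,b_{m-1,\ell}\). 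That matches the right side.

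\emph{Identity \eqref{eq:bernstein-minus}.} Substitute \(x\mapsto1-x\) and \(u\mapsto1-u\) in \eqref{eq:bernstein-plus}. The symmetry \(b_{m,k}(1-x)=b_{m,m+1-k}(x)\) converts the left side into the left side of \eqref{eq:bernstein-minus} with index \(m+1-\ell\). It converts \(\sum_{k\le \ell}\) into \(\sum_{k\ge m+1-\ell}\). Renaming \(m+1-\ell\) as \(\ell\) then gives \eqref{eq:bernstein-minus}.

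\emph{Main obstacle.} The only delicate step is the derivative bookkeeping in \eqref{eq:bernstein-plus}, specifically the boundary indices \(\ell=1\) and \(\ell=m\), where \(b_{m-1,0}\) or \(b_{m-1,m}\) appears. I would handle these uniformly with the zero convention above, noting that Pascal's rule and the derivative formula remain valid under it. As a cross-check, I would also give the probabilistic reading with \(m-1\) auxiliary uniforms. The left side of \eqref{eq:bernstein-plus} conditions on a new uniform \(u\): the event \(u\le x\) keeps rank \(\ell\), and on \(u>x\) the factor \(m\) times the density accounts for the rank. Summing over resulting ranks \(k\le\ell\) should reproduce the identity, and this is a sanity check on the constant \(m\).
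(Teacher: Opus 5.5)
Your proof is correct and follows the paper's route. You differentiate \eqref{eq:bernstein-plus} and compare values at $x=1$, obtain \eqref{eq:bernstein-minus} by the reflection $(x,\ell)\mapsto(1-x,m+1-\ell)$, and evaluate the integral as a beta integral. The only difference is in how the derivative is computed: you use the Bernstein derivative and degree-raising relations with a zero convention, which handles $\ell=m$ uniformly, whereas the paper writes the derivative explicitly for $\ell<m$ and treats $\ell=m$ by the binomial theorem. One small correction: Pascal's rule actually fails under your convention when $m=1$, but this is harmless because it appears only multiplied by the vanishing factor $m-1$.
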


\begin{proof}
For the first identity, differentiate both sides: each derivative equals
\[
-(m-\ell)\binom{m-1}{\ell-1}
x^{\ell-1}(1-x)^{m-\ell-1}
\]
when \(\ell<m\), and both sides vanish at \(x=1\); the case \(\ell=m\) is
the binomial theorem.  The second identity follows by replacing
\((x,\ell)\) with \((1-x,m+1-\ell)\).  Finally, the beta integral gives
\[
\int_0^1 b_{m,k}(x)\dd x
=
\binom{m-1}{k-1}\frac{(k-1)!(m-k)!}{m!}
=\frac1m,
\].
\end{proof}

\begin{lemma}[Transfer lemma]\label[lemma]{lem:transfer}
For every \(m\ge1\) and every word \(w\in\{+,-\}^r\),
\[
H_w^{(m)}(x)=\sum_{k=1}^{m} C_w^{(m)}(k)b_{m,k}(x).
\]
\end{lemma}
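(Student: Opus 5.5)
Induction on the length \(r\) of \(w\), using the recurrences \eqref{eq:H-empty}--\eqref{eq:H-minus} for \(H_w^{(m)}\) and the defining recurrences of \(C_w^{(m)}\) in \Cref{def:endpoint-counts}. The key input is \Cref{lem:bernstein-identities}: the one-step operators appearing in \eqref{eq:H-plus} and \eqref{eq:H-minus} act on each Bernstein basis polynomial by sending it to a partial sum of basis polynomials. Passing the operator through the expansion then reproduces exactly the summation defining \(C_{\pm v}^{(m)}\).

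\textbf{Base case.} For \(r=0\) we have \(H_\varnothing^{(m)}=1\) and \(C_\varnothing^{(m)}(k)=1\) for all \(k\). The claim becomes \(\sum_{k=1}^m b_{m,k}(x)=1\), which is the binomial theorem. It is also the case \(\ell=m\) of \eqref{eq:bernstein-plus}.

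\textbf{Inductive step, \(w=+v\).} Define the linear operator
\[
T_+[F](x)=xF(x)+m\int_x^1F(u)\dd u
\]
on polynomials, so that \(H_{+v}^{(m)}=T_+[H_v^{(m)}]\). By the induction hypothesis, \(H_v^{(m)}=\sum_{\ell}C_v^{(m)}(\ell)b_{m,\ell}\). By linearity and \eqref{eq:bernstein-plus},
\[
H_{+v}^{(m)}(x)=\sum_{\ell=1}^m C_v^{(m)}(\ell)\sum_{k=1}^{\ell}b_{m,k}(x).
\]
Exchanging the order of summation gives
\[
H_{+v}^{(m)}(x)=\sum_{k=1}^m b_{m,k}(x)\sum_{\ell=k}^m C_v^{(m)}(\ell)=\sum_{k=1}^m C_{+v}^{(m)}(k)\,b_{m,k}(x).
\]

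\textbf{Inductive step, \(w=-v\).} The argument is identical, using \(T_-[F](x)=(1-x)F(x)+m\int_0^xF(u)\dd u\) and \eqref{eq:bernstein-minus}. Exchanging sums turns \(\sum_{\ell}C_v^{(m)}(\ell)\sum_{k\ge\ell}b_{m,k}\) into \(\sum_k b_{m,k}\sum_{\ell\le k}C_v^{(m)}(\ell)=\sum_k C_{-v}^{(m)}(k)\,b_{m,k}\).

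\textbf{Where care is needed.} There is no real obstacle, since the substantive analytic content is already in \Cref{lem:bernstein-identities}. The one point to check is the index orientation: a \(+\)-step for \(H\) produces lower partial sums \(\sum_{k\le\ell}\) in the Bernstein basis, while \(C_{+v}\) sums over \(\ell\ge k\). These are the same double sum over \(\{k\le\ell\}\), so the swap is consistent. The \(-\)-step is consistent in the same way, with both inequalities reversed.
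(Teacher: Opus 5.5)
Your proposal is correct and follows essentially the same route as the paper. The paper also inducts on the length of \(w\), uses the binomial theorem for the base case, pushes the one-step recurrence through the Bernstein expansion via \Cref{lem:bernstein-identities}, and swaps the double sum over \(\{k\le\ell\}\) (respectively \(\{k\ge\ell\}\)) to recover \(C_{\pm v}^{(m)}\).
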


\begin{proof}
We induct on the length of \(w\).  For \(w=\varnothing\), the statement is $1=\sum_{k=1}^m b_{m,k}(x)$, the binomial theorem.

Assume the statement for \(v\).  Using \eqref{eq:H-plus}, the induction
hypothesis, and \eqref{eq:bernstein-plus}, we get
\[
\begin{aligned}
H_{+v}^{(m)}(x)&=x\sum_{\ell=1}^mC_v^{(m)}(\ell)b_{m,\ell}(x)+m\int_x^1\sum_{\ell=1}^mC_v^{(m)}(\ell)b_{m,\ell}(u)\dd u\\
&=\sum_{\ell=1}^m C_v^{(m)}(\ell)
\left(xb_{m,\ell}(x)+m\int_x^1b_{m,\ell}(u)\dd u\right)\\
&=\sum_{\ell=1}^m C_v^{(m)}(\ell)\sum_{k=1}^{\ell} b_{m,k}(x)\\
&=\sum_{k=1}^m\left(\sum_{\ell=k}^m C_v^{(m)}(\ell)\right)b_{m,k}(x)\\
&=\sum_{k=1}^m C_{+v}^{(m)}(k)b_{m,k}(x).
\end{aligned}
\]
The proof for \(-v\) is identical, using \eqref{eq:H-minus} and
\eqref{eq:bernstein-minus}:
\[
\begin{aligned}
H_{-v}^{(m)}(x)&=(1-x)\sum_{\ell=1}^mC_v^{(m)}(\ell)b_{m,\ell}(x)+m\int_0^x\sum_{\ell=1}^mC_v^{(m)}(\ell)b_{m,\ell}(u)\dd u\\
&=\sum_{\ell=1}^m C_v^{(m)}(\ell)
\left((1-x)b_{m,\ell}(x)+m\int_0^xb_{m,\ell}(u)\dd u\right)\\
&=\sum_{\ell=1}^m C_v^{(m)}(\ell)\sum_{k=\ell}^{m} b_{m,k}(x)\\
&=\sum_{k=1}^m\left(\sum_{\ell=1}^k C_v^{(m)}(\ell)\right)b_{m,k}(x)\\
&=\sum_{k=1}^m C_{-v}^{(m)}(k)b_{m,k}(x).
\end{aligned}
\]
This completes the induction.
\end{proof}

\begin{proof}[Proof of \Cref{thm:main-general}]
It suffices to prove the identity after the specialization \(t=m\) for every
positive integer \(m\), since both sides are polynomials in \(t\).

Let \(X_1,\ldots,X_n\) be independent continuous uniform random variables.
Their relative order is almost surely uniformly distributed over \(S_n\), and
\(\rec_\eps\) depends only on comparisons among entries.  Therefore
\[
\frac1{n!}\sum_{\pi\in S_n}m^{\rec_\eps(\pi)}
\]
is the expected value of \(m^{\rec_\eps}\) in this continuous model.

Set $w=\eps_{n-1}\eps_{n-2}\cdots\eps_1$. The rightmost position is always a record, contributing one factor \(m\).  After
conditioning on \(X_n=x\), the remaining variables are inspected with comparison
word \(w\).  Hence
\begin{equation}\label{eq:random-side}
\frac1{n!}\sum_{\pi\in S_n}m^{\rec_\eps(\pi)}
=m\int_0^1 H_w^{(m)}(x)\dd x.
\end{equation}

By \Cref{lem:transfer} and \Cref{lem:bernstein-identities},
\[
m\int_0^1 H_w^{(m)}(x)\dd x
=
m\sum_{k=1}^m C_w^{(m)}(k)\int_0^1b_{m,k}(x)\dd x
=
\sum_{k=1}^m C_w^{(m)}(k).
\]
By \Cref{lem:endpoint-order}, the last sum is \(\Omega(P_\eps;m)\).  Combining
this with \eqref{eq:random-side} proves
\[
\frac1{n!}\sum_{\pi\in S_n}m^{\rec_\eps(\pi)}=\Omega(P_\eps;m)
\]
for all positive integers \(m\).  The polynomial identity in \(t\) follows.
\end{proof}

For the alternating sign word
\(\eps_i^{\mathrm{zz}}=+\) for odd \(i\) and
\(\eps_i^{\mathrm{zz}}=-\) for even \(i\), write
\[
\zzrec(\pi)=\rec_{\eps^{\mathrm{zz}}}(\pi).
\]
Then \(P_{\eps^{\mathrm{zz}}}=Z_n\), and \Cref{thm:main-general} specializes
to
\[
\sum_{\pi\in S_n}t^{\zzrec(\pi)}=n!\,\Omega(Z_n;t).
\]
This is the zig-zag specialization posed in \cite[Problem~5.3]{FMP}.

\begin{remark}[Symmetries and checks]\label[remark]{cor:sign-reversal}
\label{rem:path-reversal}
Complementing values, \(c(\pi)_i=n+1-\pi_i\), interchanges the two record
directions:
\begin{align*}
\rec_{-\eps}(c(\pi))&=\rec_\eps(\pi),\\
\rec_{-\eps}^+(c(\pi))&=\rec_\eps^-(\pi),\\
\rec_{-\eps}^-(c(\pi))&=\rec_\eps^+(\pi).
\end{align*}
This follows directly by complementing the threshold at every step.  If
\(\eps^\leftarrow_i=\eps_{n-i}\), reversing the vertices identifies
\(P_\eps\) with \(P_\eta\), where
\(\eta_i=-\eps_{n-i}\).  Complementing the values of an order-preserving map
then identifies \(P_\eta\) with \(P_{-\eta}\), so
\[
\Omega(P_\eps;t)=\Omega(P_{\eps^\leftarrow};t).
\]
Thus sign reversal and word reversal do not produce new order polynomials.

For the monotone word \((+,\ldots,+)\), the statistic is the classical number
of right-to-left maxima and
\[
\sum_{\pi\in S_n}t^{\rec_\eps(\pi)}
=n!\binom{t+n-1}{n}=t(t+1)\cdots(t+n-1).
\]
The all-\(-\) case follows by complementation.

For the alternating sign word, the covering relations are
\[
x_1\succ x_2\prec x_3\succ x_4\prec\cdots .
\]
Under the opposite convention
\(x_1\prec x_2\succ x_3\prec\cdots\), the signs in
\(\eps^{\mathrm{zz}}\) are reversed, so the two statistics are
equidistributed by value complementation.  Every position is a record
exactly for a down-up alternating permutation; hence, as a consistency check,
\[
[t^n]\sum_{\pi\in S_n}t^{\zzrec(\pi)}=E_n,
\]
the classical Euler zig-zag number \cite{StanleySurvey}.
\end{remark}

\section{Circular fences and cyclic records}\label{sec:circular-records}

Let \(\eta=(\eta_1,\ldots,\eta_n)\in\{+,-\}^n\) be nonconstant, and read
subscripts modulo \(n\).  The cyclic orientation poset \(C_\eta\) is the
transitive closure of
\[
\eta_i=+ \Longrightarrow x_i\succ x_{i+1},
\qquad
\eta_i=- \Longrightarrow x_i\prec x_{i+1}.
\]
Nonconstancy makes this orientation acyclic and ensures that both signs occur.
Thus \(C_\eta\) is a poset, and its order-preserving maps to \([m]\) are the
cyclic sequences satisfying the corresponding weak inequalities.  Rank-matrix
and trace models for related oriented and circular fences were developed by
Kantarc{\i} O\u{g}uz \cite{KantarciOguz}.
If one sign occurs only once, its edge is redundant in the transitive closure
and the Hasse diagram is a path.  If both signs occur at least twice, the
undirected Hasse diagram is a cycle and \(C_\eta\) is a circular fence in the
usual sense.  The record identity below includes both cases; the comparison
with Kahane's circular blocks applies only in the latter.

\begin{proposition}[Trace formula]\label[proposition]{prop:circular-trace}
For a positive integer \(m\), let
\[
M_+[a,b]=\mathbf 1_{a\ge b},
\qquad
M_-[a,b]=\mathbf 1_{a\le b}
\qquad(a,b\in[m]).
\]
Then
\[
\Omega(C_\eta;m)=\operatorname{tr}(M_{\eta_1}\cdots M_{\eta_n}).
\]
\end{proposition}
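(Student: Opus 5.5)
The plan is to expand the trace as a sum over closed walks and match each walk with an order-preserving map. By definition of matrix multiplication and trace,
\[
\operatorname{tr}(M_{\eta_1}\cdots M_{\eta_n})
=\sum_{a_1,\ldots,a_n\in[m]}\prod_{i=1}^n M_{\eta_i}[a_i,a_{i+1}],
\]
with indices read modulo \(n\), so \(a_{n+1}=a_1\). Each summand is \(0\) or \(1\), and it equals \(1\) exactly when, for every \(i\), \(a_i\ge a_{i+1}\) if \(\eta_i=+\) and \(a_i\le a_{i+1}\) if \(\eta_i=-\). Thus the trace counts cyclic sequences \((a_1,\ldots,a_n)\in[m]^n\) satisfying these \(n\) weak inequalities.

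It then remains to identify such sequences with order-preserving maps \(f:C_\eta\to[m]\) via \(f(x_i)=a_i\). This is the step I expect to need the most care, though it is still routine. An order-preserving map must respect the generating relations \(x_i\succ x_{i+1}\) for \(\eta_i=+\) and \(x_i\prec x_{i+1}\) for \(\eta_i=-\), so it satisfies the inequalities above. Conversely, a map satisfying the inequalities on the generating relations respects their transitive closure, because weak inequalities in \([m]\) compose transitively. This closure is a genuine partial order because \(\eta\) is nonconstant: the directed cycle \(x_1\to x_2\to\cdots\to x_n\to x_1\) is not consistently oriented, so no nontrivial cycle of strict relations arises and antisymmetry holds.

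Finally, I would note that redundant edges cause no difficulty. If one sign occurs only once, its edge is implied by the others, and imposing its inequality anyway does not change the set of maps. Hence the trace equals \(\#\{f:C_\eta\to[m]\text{ order-preserving}\}=\Omega(C_\eta;m)\), which proves the proposition.
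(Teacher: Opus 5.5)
Your proof is correct and is essentially the paper's standard transfer-matrix argument: the paper also writes the \((a_1,a_{n+1})\)-entry of the product as a sum of products of edge indicators, uses the trace to impose \(a_{n+1}=a_1\), and identifies the surviving terms with order-preserving maps. Your remarks on acyclicity for nonconstant \(\eta\) and on redundant edges match what the paper says in the paragraph just before the proposition, rather than inside its proof.
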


\begin{proof}
The \((a_1,a_{n+1})\)-entry of the product sums the indicators of the edge
inequalities over \(a_2,\ldots,a_n\in[m]\).  Taking the trace imposes
\(a_{n+1}=a_1\), and hence counts the order-preserving maps.  This is the
standard transfer-matrix argument; compare \cite[\S4.7]{StanleyEC1}.  For an
analogous trace construction for circular-fence rank polynomials, see
\cite[Proposition~5.3]{KantarciOguz}.
\end{proof}

Ferroni, Morales, and Panova proved coefficient nonnegativity for every circular
fence using the Gessel--Krattenthaler determinant for cylindric partitions
\cite[Theorem~7.7]{FMP}, based on the determinant of Gessel and Krattenthaler
\cite{GesselKrattenthaler}.  For closed alternating zig-zags,
Lundstr\"om and Saud Maia Leite give a cyclic-swap interpretation of the
\(h^\ast\)-polynomial \cite{LSL}.  Here we seek instead a statistic for the
coefficients of \(n!\Omega(C_\eta;t)\).  The trace formula admits such a record
interpretation once the missing endpoint is replaced by a canonical root.

\begin{definition}[Cyclic right-to-left records]\label[definition]{def:circular-records}
Let $I_+(\eta)=\{i\in[n]:\eta_i=+\}$. For \(\pi=\pi_1\cdots\pi_n\in S_n\), let \(r=r_\eta(\pi)\) be the unique
index in \(I_+(\eta)\) such that $\pi_r=\max\{\pi_i:i\in I_+(\eta)\}$. The position \(r\) is declared to be a record, and the initial threshold is
\(h=\pi_r\).  We then inspect the remaining positions in cyclic right-to-left
order $r-1,\ r-2,\ \ldots,\ r+1$. When position \(i\) is inspected, it is a record if
\[
\eta_i=+\text{ and }\pi_i>h,
\qquad\text{or}\qquad
\eta_i=-\text{ and }\pi_i<h.
\]
Whenever a record occurs, the threshold is reset to \(h=\pi_i\).  The total
number of records is denoted by \(\operatorname{crec}_\eta(\pi)\).
\end{definition}

Thus the terminal position used for paths is replaced by the largest value
sitting at the tail of a \(+\)-edge.  This choice is intrinsic to the labeled
cycle and is almost surely unique in the continuous model used below.  Rooting
at a \(+\)-edge maximum is a convention: value complementation together with
sign reversal exchanges it with rooting at the smallest value on a
\(-\)-edge.

\begin{theorem}[Circular records]\label{thm:circular-records}
For every nonconstant \(\eta\in\{+,-\}^n\), we have
\[
\sum_{\pi\in S_n}t^{\operatorname{crec}_\eta(\pi)}
=
n!\,\Omega(C_\eta;t).
\]
\end{theorem}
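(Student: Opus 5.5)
The plan is to follow the proof of \Cref{thm:main-general}: specialize \(t=m\), pass to the continuous model with i.i.d.\ uniform \(X_1,\dots,X_n\), and match \(\mathbb E[m^{\operatorname{crec}_\eta}]\) with \(\Omega(C_\eta;m)=\operatorname{tr}(M_{\eta_1}\cdots M_{\eta_n})\) from \Cref{prop:circular-trace}. The root \(r\) is almost surely unique, so I split the expectation over \(r\in I_+(\eta)\) and over its value \(X_r=x\). Fix \(r\), and let \(w^{(r)}=\eta_{r-1}\eta_{r-2}\cdots\eta_{r+1}\) be the cyclic comparison word, indices modulo \(n\). Given \(X_r=x\), the event that \(r\) is the root says that every other \(+\)-position has value below \(x\); the \(-\)-positions are unconstrained.

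The key observation is that the threshold never exceeds \(x\). A \(+\)-record takes a value below \(x\), and a \(-\)-record lowers the threshold. Hence a \(+\)-letter inspected at threshold \(h\le x\) contributes a non-record on \([0,h]\) and a record on \((h,x]\). The event \(u>x\) is excluded, and it is exactly this exclusion that encodes the root condition. This gives truncated recurrences for \(\tilde H^{(m)}_{w}(h;x)\):
\[
\tilde H_{+v}(h;x)=h\,\tilde H_v(h;x)+m\int_h^x\tilde H_v(u;x)\dd u,
\qquad
\tilde H_{-v}(h;x)=(1-h)\tilde H_v(h;x)+m\int_0^h\tilde H_v(u;x)\dd u .
\]
The contribution of root \(r\) is then \(m\int_0^1\tilde H_{w^{(r)}}(x;x)\dd x\).

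Next I would prove a truncated analogue of \Cref{lem:transfer}. In the Bernstein basis \(b_{m,k}(h)\), the truncation at \(x\) should correspond to inserting \(x\) among the same \(m-1\) auxiliary points, i.e.\ to a second Bernstein expansion in \(x\). This leads to a formula
\[
\tilde H_w(h;x)=\sum_{j,k} D_w(k,j)\,b_{m,k}(h)\,\beta_j(x),
\]
where \(D_w(k,j)\) counts endpoint sequences as in \Cref{def:endpoint-counts}, subject to the extra constraint that every value on a \(+\)-tail is at most (or strictly below) a cap determined by \(j\). The integration \(\int_0^1\cdots\dd x\) with the diagonal substitution \(h=x\) should collapse via beta integrals, exactly as \(\int_0^1 b_{m,k}=1/m\) did. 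The target of this collapse is the number of order-preserving \(f:C_\eta\to[m]\) in which \(f(x_r)\) is the maximum of \(f\) over the \(+\)-tails, with ties broken canonically: \(r\) is the first tail attaining the maximum in the cyclic right-to-left scan, so the earlier tails satisfy a strict inequality and the later ones a weak one.

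The cyclic closing edge is the last letter of \(w^{(r)}\) compared back into \(r\). It must be checked that imposing \(g_{n-1}\) against \(g_0=f(x_r)\) is exactly what the truncation enforces. If it is not, I would instead cut the cycle at the edge \(r\) and account for the edge \(x_r\succeq x_{r+1}\) separately, using that \(x\) dominates every \(+\)-value. Summing over \(r\) then partitions all order-preserving maps by their canonical maximal \(+\)-tail, which yields the trace.

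I expect the main obstacle to be the truncated transfer identity: finding the correct two-variable Bernstein identity, the analogue of \eqref{eq:bernstein-plus} with upper limit \(x\), and fixing the tie conventions, strict versus weak, that make the diagonal integral count the canonically rooted maps exactly. I would first try the case \(n=2\) with \(\eta=(+,-)\) to calibrate these conventions before attempting the induction on the word.
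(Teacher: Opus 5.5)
Your set-up matches the paper's root decomposition. Rescale \([0,x]\) to \([0,1]\) in your truncated recurrences. On Bernstein coefficient vectors, a \(+\)-step becomes \(N_+(x)=xA_+\) and a \(-\)-step becomes \(N_-(x)=(1-x)I+xA_-\), where \(A_+[a,b]=\mathbf 1_{a\le b}\) and \(A_-[a,b]=\mathbf 1_{a\ge b}\). Evaluating at \(h=x\) picks out \(e_m^T\), so \(\tilde H_{w^{(r)}}(x;x)=e_m^T T_r(x)\mathbf 1\) with \(T_r(x)=N_{\eta_{r-1}}(x)\cdots N_{\eta_{r+1}}(x)\). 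The truncated transfer you flag as the main obstacle is therefore routine.

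The genuine gap is the next step. You want each root term \(m\int_0^1\tilde H_{w^{(r)}}(x;x)\dd x\) to equal the number of order-preserving maps whose canonical maximal \(+\)-tail is \(r\), and only then to sum over \(r\). No tie convention can make this true, because the individual root terms are not integers:
\begin{itemize}
\item At \(m=1\), the term for \(r\) is \(\Pr(r_\eta(X)=r)=1/|I_+(\eta)|\), while the single constant map must be assigned to one root.
\item For a nondegenerate instance, take \(\eta=(+,+,-)\) and \(m=2\). The roots \(r=1,2\) contribute \(2\int_0^1x(1+2x)\dd x=7/3\) and \(2\int_0^1x(1+x)\dd x=5/3\), which sum to \(\Omega(C_\eta;2)=4\).
\item Even splitting tied maps equally among tied roots gives \(5/2\) and \(3/2\) instead.
\end{itemize}
Your fallback of cutting the cycle at \(r\) runs into the same non-integrality. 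The per-root beta-integral collapse you anticipate cannot exist.

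The identity holds only after summing over roots. What is missing is a global argument that uses the closing of the cycle without ever separating the roots. The paper obtains it as the derivative of a trace. Set \(\mathcal N_\eta(x)=N_{\eta_n}(x)\cdots N_{\eta_1}(x)\). A lower bidiagonal matrix \(L\) satisfies \([L,A_-]=A_--I\) and \([L,A_+]=A_+-m\mathbf 1e_m^T\). Hence in the Leibniz expansion of \(\frac{d}{dx}\operatorname{tr}\mathcal N_\eta(x)\), the commutator parts telescope to \([L/x,\mathcal N_\eta]\), which has trace zero. By cyclicity of trace, the rank-one insertions at the \(+\)-positions become exactly \(m\sum_{r\in I_+(\eta)}e_m^T T_r(x)\mathbf 1\). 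Integrate from \(0\), where some factor \(xA_+\) vanishes, to \(1\). This gives \(\operatorname{tr}(A_{\eta_n}\cdots A_{\eta_1})=\Omega(C_\eta;m)\) by \Cref{prop:circular-trace}. You need an identity of this global kind in place of your per-root counting.
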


\begin{proof}
It suffices to prove the identity after substituting \(t=m\) for every positive
integer \(m\), since both sides are polynomials in \(t\).

Let \(X_1,\ldots,X_n\) be independent uniform random variables on \([0,1]\).
Their relative order is almost surely uniform on \(S_n\), and
\(\operatorname{crec}_\eta\) depends only on this relative order.  Hence
\[
\frac1{n!}\sum_{\pi\in S_n}m^{\operatorname{crec}_\eta(\pi)}
=
\mathbb E\bigl[m^{\operatorname{crec}_\eta(X)}\bigr].
\]

Let \(A_+\) and \(A_-\) be the \(m\times m\) matrices
\[
A_+[a,b]=
\begin{cases}
1,&a\le b,\\
0,&a>b,
\end{cases}
\qquad
A_-[a,b]=
\begin{cases}
1,&a\ge b,\\
0,&a<b,
\end{cases}
\]
with rows and columns indexed by \([m]\).  These are the one-step transfer
matrices for the endpoint recurrences in \Cref{def:endpoint-counts}: a
\(+\)-step sends a continuation vector \(v\) to \(A_+v\), and a \(-\)-step
sends it to \(A_-v\).  Put
\[
N_+(x)=xA_+,
\qquad
N_-(x)=(1-x)I+xA_-.
\]
Let \(e_m\) be the last standard basis vector and let \(\mathbf 1\) be the
all-one column vector.  These are the boundary states of the transfer.  After
conditioning on a root value \(x\) and rescaling \([0,x]\) to \([0,1]\), the
initial threshold is \(1\).  Since \(b_{m,k}(1)=\delta_{k,m}\), the Bernstein
expansion in \Cref{lem:transfer} evaluates this state by \(e_m^T\).  At the
other end the empty continuation has coefficient vector \(\mathbf 1\), because
\(C_\varnothing^{(m)}(k)=1\) for all \(k\).  For \(r\in I_+(\eta)\), define
\[
T_r(x)=N_{\eta_{r-1}}(x)N_{\eta_{r-2}}(x)\cdots
N_{\eta_{r+1}}(x),
\]
where the product contains the \(n-1\) indices met in cyclic decreasing
order.
We claim that
\begin{equation}\label{eq:circular-root-decomposition}
\frac1{n!}\sum_{\pi\in S_n}m^{\operatorname{crec}_\eta(\pi)}
=
m\int_0^1\sum_{r\in I_+(\eta)} e_m^T T_r(x)\mathbf 1\,\dd x .
\end{equation}
Indeed, fix \(r\in I_+(\eta)\) and condition on \(X_r=x\).  The contribution in
which \(r\) is the chosen root requires every other \(+\)-position to have value
in \([0,x]\).  During the cyclic scan the threshold never exceeds \(x\).  Thus a
non-root \(+\)-position contributes the probability factor \(x\) and, after
rescaling \([0,x]\) to \([0,1]\), the ordinary \(+\)-transfer \(A_+\).  A
\(-\)-position has two possibilities: values in \((x,1]\) cannot be records and
contribute \((1-x)I\), while values in \([0,x]\) contribute, after the same
rescaling, the ordinary \(-\)-transfer \(xA_-\).  The root itself contributes
one factor \(m\).  Multiplying the transfers in the scan order and integrating
over \(x\) gives \eqref{eq:circular-root-decomposition}.

To evaluate the integral, set
\[
\mathcal N_\eta(x)
=N_{\eta_n}(x)N_{\eta_{n-1}}(x)\cdots N_{\eta_1}(x).
\]
We shall show that
\begin{equation}\label{eq:circular-trace-derivative}
\frac{\dd}{\dd x}\operatorname{tr}\mathcal N_\eta(x)
=
m\sum_{r\in I_+(\eta)}e_m^T T_r(x)\mathbf 1.
\end{equation}
Let $R=m\mathbf 1 e_m^T$ and let \(L\) be the lower bidiagonal matrix
\[
L_{a,a}=a-m\quad(1\le a\le m),
\qquad
L_{a,a-1}=-(a-1)\quad(2\le a\le m),
\]
with all other entries equal to \(0\).  A direct entrywise check gives
\[
LA_- - A_-L=A_- - I,
\qquad
LA_+ - A_+L=A_+ - R.
\]
Indeed, multiplication by the two nonzero diagonals of \(L\) gives
\[
\begin{aligned}
(LA_--A_-L)[a,b]
 &=\mathbf 1_{a\ge b}-\delta_{a,b},\\
(LA_+-A_+L)[a,b]
 &=\mathbf 1_{a\le b}-m\mathbf 1_{b=m},
\end{aligned}
\]
which are the entries of \(A_--I\) and
\(A_+-m\mathbf 1e_m^T\), respectively.
Equivalently, for \(x>0\), with \(D(x)=L/x\),
\[
N_-'(x)=[D(x),N_-(x)],
\qquad
N_+'(x)=R+[D(x),N_+(x)].
\]
Differentiating \(\operatorname{tr}\mathcal N_\eta(x)\), the Leibniz sum of
the commutator terms telescopes by the derivation rule:
\[
\sum_{i=1}^{n}
 N_{\eta_n}\cdots N_{\eta_{i+1}}[D,N_{\eta_i}]
 N_{\eta_{i-1}}\cdots N_{\eta_1}
=[D,\mathcal N_\eta].
\]
(Here every matrix is evaluated at \(x\).)  Its trace is zero.  The remaining
terms insert \(R\) at the \(+\)-positions.  By cyclicity of trace, these
insertions give
\[
\sum_{r\in I_+(\eta)}
\operatorname{tr}\left(RT_r(x)\right)
=
m\sum_{r\in I_+(\eta)}e_m^T T_r(x)\mathbf 1,
\]
which proves \eqref{eq:circular-trace-derivative}.  Although \(D(x)\) has a
pole at \(0\), the displayed derivative identity is an identity of polynomials
on \((0,1]\), hence extends to the interval after integration.

Since \(I_+(\eta)\ne\varnothing\), we have \(N_+(0)=0\) in at least one factor,
so \(\operatorname{tr}\mathcal N_\eta(0)=0\).  Also
\(N_+(1)=A_+\) and \(N_-(1)=A_-\).  Integrating
\eqref{eq:circular-trace-derivative} over \([0,1]\) gives
\[
m\int_0^1\sum_{r\in I_+(\eta)} e_m^T T_r(x)\mathbf 1\,\dd x
=
\operatorname{tr}\left(A_{\eta_n}A_{\eta_{n-1}}\cdots A_{\eta_1}\right).
\]
The matrices \(A_+\) and \(A_-\) are the transposes of the matrices \(M_+\) and
\(M_-\) in \Cref{prop:circular-trace}.  Therefore
\[
\operatorname{tr}\left(A_{\eta_n}A_{\eta_{n-1}}\cdots A_{\eta_1}\right)
=
\operatorname{tr}\left(M_{\eta_1}M_{\eta_2}\cdots M_{\eta_n}\right)
=
\Omega(C_\eta;m).
\]
Combining this with \eqref{eq:circular-root-decomposition} proves the desired
identity at \(t=m\), and polynomial interpolation completes the proof.
\end{proof}

We next compare the cyclic records with Kahane's circular blocks.  This
comparison requires the undirected Hasse diagram to be a cycle.

\begin{lemma}[Closing a greedy block path]
\label{lem:closing-greedy-blocks}
Let \(C\) be a circular fence whose Hasse diagram is a cycle, and let \(y_s\)
be the ascent vertex with largest label among all ascent vertices.  Delete
the incoming cover \(y_{s-1}<y_s\), and read the resulting path from \(y_s\)
to \(y_{s-1}\).  If \(B_1,\ldots,B_k\) is Kahane's greedy block partition of
this path, then the same consecutive blocks form the unique valid circular
block partition of \(C\).
\end{lemma}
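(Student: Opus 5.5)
Because the paper quotes Kahane's definitions only indirectly, I first fix the conventions this argument needs; I take these as a reconstruction of \cite[Definition~3.5 and Conjecture~4.8]{Kahane}. A block of a labeled fence is a consecutive run of vertices whose first vertex, the root, carries the threshold. A run is greedy when each non-root vertex fails the new-block test against the root label: an ascent vertex is not larger than the root label, and a descent vertex is not smaller. A circular block partition of \(C\) is valid when it consists of cyclically consecutive blocks satisfying this greedy condition. In addition, every block boundary must be forced, meaning that the first vertex of each block passes the test against the root of the preceding block. Under the reflection of \Cref{prop:kahane-path-comparison}, ascent vertices correspond to \(+\)-positions, so \(y_s\) corresponds to the root \(r_\eta(\pi)\) of \Cref{def:circular-records}.

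\textbf{Step 1: the path blocks form a valid circular partition.} The greedy partition \(B_1,\ldots,B_k\) of the opened path starts at \(y_s\), so \(y_s\) is the root of \(B_1\). All boundaries inside the path are forced by construction. Only the closing boundary from \(B_k\) back to \(y_s\) needs checking. The deleted cover makes \(y_s\) an ascent vertex whose predecessor is \(y_{s-1}\). I will show that \(y_s\) passes the ascent test against the root \(\rho\) of \(B_k\), that is, \(\rho<y_s\). This holds because every threshold in the scan is at most the label of \(y_s\). Ascent records only occur at ascent vertices, and their labels are below that of \(y_s\) by maximality. Descent records only lower the threshold. This is the same monotonicity used in the proof of \Cref{thm:circular-records}, where the threshold never exceeds \(x\). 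A tie \(\rho=y_s\) cannot occur since labels are distinct and \(\rho\ne y_s\) when \(k\ge2\). If \(k=1\), the single block closes on itself and is trivially valid.

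\textbf{Step 2: uniqueness.} Let \(\mathcal B\) be any valid circular block partition. I first show that \(y_s\) must be a root in \(\mathcal B\). Suppose instead that \(y_s\) lies in a block with root \(\rho'\ne y_s\). Since \(y_s\) is an ascent vertex, the greedy condition forces \(y_s\) to be not larger than \(\rho'\), so \(y_s<\rho'\). Next I track the chain of block roots around the cycle. Each forced boundary gives a new root that is larger than the previous one at an ascent, or smaller at a descent. Roots at ascent vertices are bounded by \(y_s\), so \(\rho'\) must be a descent vertex. I then trace backwards from \(\rho'\) to the last ascent-type root preceding it, or to the circular wrap. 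Descent roots strictly decrease along the chain, which forces \(\rho'\) to be smaller than some ascent-root label at most \(y_s\). This contradicts \(y_s<\rho'\).

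Once \(y_s\) is known to be a root, cutting \(\mathcal B\) at \(y_s\) gives a block partition of the opened path starting at \(y_s\). In this partition every block is greedy and every internal boundary is forced. An induction along the path then shows that it agrees with Kahane's greedy partition, since the first vertex that passes the test against the current root must start the next block. Hence \(\mathcal B=\{B_1,\ldots,B_k\}\).

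\textbf{Main obstacle.} The step I expect to be hardest is showing that \(y_s\) is a root in every valid partition. The backward-tracing argument has to handle the case where the root chain wraps all the way around the cycle. Here I would use the hypothesis that the Hasse diagram is a cycle, with both signs occurring at least twice, to guarantee that an ascent root exists somewhere in the chain. If a direct contradiction proves awkward, I would instead compare \(\mathcal B\) with the greedy scan started at \(y_s\). I would argue that the first vertex at which they disagree produces either an unforced boundary or a violation of the greedy condition.
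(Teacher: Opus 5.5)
Your proposal is correct in substance and uses the paper's key inequality, but it reaches the conclusion by a different route. In both arguments the point is that every root after \(y_s\) has a smaller label than \(y_s\): ascent roots by maximality, descent roots by a downward reset. Hence the restored cover \(B_k<B_1\) respects root labels.

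The paper works with Kahane's block quotient. It takes admissibility after restoring the cover from the ascent case of the proof of \cite[Lemma~4.7]{Kahane}. It checks new transitive relations by noting that they come from directed quotient paths along which root labels increase. It then takes uniqueness from the statement of that lemma.

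You instead encode validity locally as ``forced boundaries''. This is equivalent to the quotient condition because strict increase along quotient edges propagates along directed paths; that translation is worth stating explicitly. You then prove uniqueness from scratch: monotonicity of the root chain at boundaries forces \(y_s\) to be a root of any valid partition with at least two blocks, and greedy forcing along the cut path then reproduces \(B_1,\ldots,B_k\). Your version is self-contained and does not rely on the internals of Kahane's proof. However, it is only as reliable as your reconstruction of his definitions, in particular the assumption that every block of a valid circular partition is an arc rooted at its first vertex in reading order. The paper's citation avoids that dependence.

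Two points need tidying. First, the root-chain argument uses boundaries, so it says nothing when \(\mathcal B\) is a single block. In that case your claim that \(y_s\) must be a root is actually false under your definitions. Take the cycle \(v_1>v_2<v_3>v_4<v_1\), read \(v_1\to v_2\to v_3\to v_4\to v_1\), with labels \(1,3,2,4\) respectively. Then both \(y_s=v_3\) and the descent vertex \(v_2\) can serve as root of the whole cycle.

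Treat this case directly. Suppose the whole cycle is a valid block with root \(\rho\). If \(\rho\) is an ascent vertex, then \(\rho=y_s\), since any other ascent root contradicts maximality. If \(\rho\) is a descent vertex, then every ascent label is at most that of \(y_s\), and every descent label is at least that of \(\rho\), which exceeds that of \(y_s\). In both cases no vertex passes the test against \(y_s\), so the greedy partition also has one block.

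Second, the obstacle you flag is not a real one. Suppose every root of a partition with at least two blocks were a descent vertex. Then root labels would strictly decrease all the way around the cycle, which is impossible. So an ascent root always exists, without appealing to the hypothesis that both signs occur twice.
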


\begin{proof}
The first block has root \(y_s\).  Every later ascent root has smaller label
by the choice of \(y_s\), while a descent root is created by a downward
threshold reset and therefore also has smaller label.  If \(k=1\), all other
ascent labels lie below the label of \(y_s\), and every descent label lies
above it.  Thus the whole cycle satisfies Kahane's block condition.

Suppose \(k\ge2\).  The ascent case in the proof of
\cite[Lemma~4.7]{Kahane} deletes the incoming cover at an ascent vertex and
extends the resulting path partition back to the cycle.  Inspection of that
argument shows that it uses maximality of the chosen label only to ensure that
the chosen root has larger label than the root of the block containing its
predecessor.  Here that predecessor lies in \(B_k\), and the required
inequality follows from the first paragraph.  Thus the path blocks are
admissible circular blocks after the cover is restored.

For completeness, validity between blocks can also be checked directly.  All
relations not using the restored cover were already present in the path block
poset and are valid.  The restored cover gives the additional block relation
\(B_k<B_1\), and the root of \(B_k\) has smaller label than the root \(y_s\) of
\(B_1\).  Any new transitive relation between blocks is represented by a
directed path in the block quotient that uses this additional edge.  Root
labels increase along every edge of that path, so every new relation satisfies
Kahane's validity condition.  The restored partition is therefore valid, and
its uniqueness follows again from \cite[Lemma~4.7]{Kahane}.
\end{proof}

\begin{proposition}[Reflection and circular blocks]
\label[proposition]{prop:kahane-circular-comparison}
Assume that both signs occur at least twice in \(\eta\), equivalently that
the Hasse diagram of \(C_\eta\) is a cycle.
Read subscripts modulo \(n\), with \(\eta_0=\eta_n\), and define
\[
\delta_j=-\eta_{n-j},
\qquad
\pi^{\mathrm{rev}}_j=\pi_{n+1-j}
\qquad(1\le j\le n).
\]
Let \(\operatorname{bl}^{\circ}_{C_\delta}\) denote Kahane's
\(\widehat{\operatorname{bl}}\), the number of blocks in the unique valid
circular block partition \cite[\S4.2 and Lemma~4.7]{Kahane}.  Then
\[
\operatorname{crec}_\eta(\pi)
=
\operatorname{bl}^{\circ}_{C_\delta}(\pi^{\mathrm{rev}}).
\]
Under this correspondence, the canonical cyclic record becomes the root with
largest label among the ascent vertices of the reflected cycle, and every
subsequent cyclic record becomes the root of the next block.
\end{proposition}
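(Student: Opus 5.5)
The plan is to reduce the circular comparison to the path comparison of \Cref{prop:kahane-path-comparison}, using \Cref{lem:closing-greedy-blocks} to close the reflected path into the cycle.

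First, fix the reflection dictionary. Write \(y_j=x_{n+1-j}\). The edge between \(y_j\) and \(y_{j+1}\) is the edge \(x_{n-j}x_{n+1-j}\) of \(C_\eta\), whose sign is \(\eta_{n-j}\); reading it in the reflected direction negates the sign, which gives \(\delta_j=-\eta_{n-j}\) with indices modulo \(n\). Hence \(y_j\leftrightarrow x_{n+1-j}\) identifies \(C_\delta\) with \(C_\eta\), and \(\pi^{\mathrm{rev}}\) labels \(y_j\) by \(\pi_{n+1-j}\).

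Next, track the ascent vertices, exactly as in the proof of \Cref{prop:kahane-path-comparison}. The vertex \(y_j\) is an ascent, meaning its incoming edge from \(y_{j-1}\) goes up, exactly when \(\delta_{j-1}=-\). With \(i=n+1-j\), this is equivalent to \(\eta_i=+\). So the ascent vertices of the reflected cycle are precisely the positions \(I_+(\eta)\), with the same labels. Consequently the ascent vertex of largest label, \(y_s\), corresponds to the canonical root \(r=r_\eta(\pi)\) of \Cref{def:circular-records}. The incoming cover at \(y_s\) is the edge joining \(x_r\) to \(x_{r+1}\).

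Delete that cover and read the path from \(y_s\) to \(y_{s-1}\). In \(x\)-coordinates this path visits \(r,r-1,\ldots,r+1\), which is exactly the cyclic right-to-left scan order of \Cref{def:circular-records}. Kahane's greedy block construction on this path begins with root \(y_s\) and threshold \(\pi_r\). At each subsequent vertex it opens a new block under the same rule as in the proof of \Cref{prop:kahane-path-comparison}: an ascent vertex (that is, \(\eta_i=+\)) with label above the current root threshold, or a descent vertex (that is, \(\eta_i=-\)) with label below it. This is verbatim the cyclic record rule, with the same threshold resets. Therefore the block roots of the path partition are exactly the cyclic records, and the number of blocks equals \(\operatorname{crec}_\eta(\pi)\).

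Finally, apply \Cref{lem:closing-greedy-blocks}, which needs the hypothesis that the Hasse diagram is a cycle. It says the path blocks form the unique valid circular block partition of \(C_\delta\). Hence \(\operatorname{bl}^{\circ}_{C_\delta}(\pi^{\mathrm{rev}})\) equals the number of path blocks, which equals \(\operatorname{crec}_\eta(\pi)\). The root correspondence stated in the proposition follows along the way.

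The main obstacle is bookkeeping: checking carefully that the cyclic indices align. Specifically, the deleted incoming cover at \(y_s\) must be the edge \(x_rx_{r+1}\), and "ascent in the reflected cycle" must match \(\eta_i=+\), including the wrap-around edge \(\delta_n=-\eta_0=-\eta_n\). Once the ascent/sign correspondence is verified for every \(j\) modulo \(n\), the rest follows immediately from the path case and the closing lemma.
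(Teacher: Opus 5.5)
Your proposal is correct and follows the paper's proof essentially step for step. You reflect, identify the ascent vertices of the reflected cycle with \(I_+(\eta)\) so that the canonical root becomes the maximum-labeled ascent vertex, cut its incoming cover (the edge \(x_rx_{r+1}\)) so the cyclic scan becomes Kahane's greedy path construction with the same local rule, and then close up with \Cref{lem:closing-greedy-blocks}; your explicit check of the wrap-around index is just a slightly more detailed version of the paper's bookkeeping.
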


\begin{proof}
Put \(y_j=x_{n+1-j}\).  As in
\Cref{prop:kahane-path-comparison}, the signs of the reflected cycle are
\(\delta_j=-\eta_{n-j}\).  Moreover, \(y_{n+1-i}=x_i\) is an ascent vertex of
the reflected cycle exactly when \(i\in I_+(\eta)\).  The chosen cyclic record
therefore becomes the maximum-labeled ascent vertex, say \(y_s\).

Cut the edge entering \(y_s\) and read the resulting path from \(y_s\) in the
forward cyclic direction.  This is the reflection of the cyclic right-to-left
scan.  The local comparison in Kahane's greedy block construction is the same
as in the proof of \Cref{prop:kahane-path-comparison}: at an ascent a new block
starts when the new label is larger than the current root label, and at a
descent it starts when the new label is smaller.  Hence the block roots of the
cut path are exactly the reflected cyclic records.

By \Cref{lem:closing-greedy-blocks}, restoring the cut edge produces Kahane's
unique valid circular partition without changing the block roots.  The
pointwise equality follows.
\end{proof}

\begin{corollary}[Kahane's circular-fence conjecture]
\label[corollary]{cor:kahane-circular-conjecture}
Let \(C\) be a circular fence of size \(n\) whose Hasse diagram is a cycle,
in the sense of \cite[Definition~4.6]{Kahane}.  Then
\[
n!\,\Omega(C;t)
=
\sum_{\sigma\in S_n}t^{\operatorname{bl}^{\circ}_{C}(\sigma)}.
\]
Consequently, \cite[Conjecture~4.8]{Kahane} holds.
\end{corollary}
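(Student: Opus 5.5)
The plan is to realize \(C\) as \(C_\delta\) for a suitable sign word and then combine \Cref{thm:circular-records} with \Cref{prop:kahane-circular-comparison}.

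\emph{Step 1: encode \(C\) as \(C_\delta\).} Label the vertices of the cycle \(y_1,\ldots,y_n\) in cyclic order, and read the orientation of the cover between \(y_j\) and \(y_{j+1}\) as a sign \(\delta_j\in\{+,-\}\). This is the convention of \Cref{prop:kahane-circular-comparison}. Then \(C=C_\delta\).

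The Hasse diagram of \(C\) is a cycle in the sense of \cite[Definition~4.6]{Kahane}. This should force both signs to occur at least twice in \(\delta\). If one sign occurred only once, its edge would be redundant and the diagram would be a path. If some sign were absent, the orientation would be constant and hence cyclic.

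\emph{Step 2: pass to the reflected word.} Define \(\eta\) by \(\eta_i=-\delta_{n-i}\), with indices read modulo \(n\). Since \(\delta_j=-\eta_{n-j}\), this inverts the relation in \Cref{prop:kahane-circular-comparison}. Reflection and sign negation preserve the number of occurrences of each sign, up to swapping the two signs. So \(\eta\) is nonconstant and both signs still occur at least twice, and the hypothesis of \Cref{prop:kahane-circular-comparison} holds for \(\eta\).

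\emph{Step 3: combine the earlier results.} \Cref{prop:kahane-circular-comparison} gives
\[
\operatorname{crec}_\eta(\pi)=\operatorname{bl}^{\circ}_{C}(\pi^{\mathrm{rev}})
\qquad\text{for every }\pi\in S_n .
\]
The map \(\pi\mapsto\pi^{\mathrm{rev}}\) is a bijection of \(S_n\), so
\[
\sum_{\sigma\in S_n}t^{\operatorname{bl}^{\circ}_C(\sigma)}
=\sum_{\pi\in S_n}t^{\operatorname{crec}_\eta(\pi)} .
\]
By \Cref{thm:circular-records}, the right-hand side equals \(n!\,\Omega(C_\eta;t)\). The relabeling \(y_j\leftrightarrow x_{n+1-j}\) is an isomorphism \(C_\eta\cong C_\delta=C\), exactly as in the proof of \Cref{prop:kahane-circular-comparison}. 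Hence the order polynomials agree, and the identity follows. This identity is precisely the statement of \cite[Conjecture~4.8]{Kahane}.

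The main obstacle is Step 1 together with the index bookkeeping in Step 2. One has to confirm that Kahane's notion of a circular fence whose Hasse diagram is a cycle matches our condition that both signs occur at least twice. One also has to check that his ascent and descent conventions agree with the sign conventions fed into \Cref{prop:kahane-circular-comparison}. If his conventions were the mirror image, I would first apply value complementation, as in \Cref{rem:path-reversal}, which swaps the two signs without changing the order polynomial. The remaining steps are a formal chaining of results already proved.
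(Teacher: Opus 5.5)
Your proposal is correct and is essentially the paper's proof. You present \(C\) as \(C_\delta\), reflect to \(C_\eta\) with \(\eta_i=-\delta_{n-i}\), and chain \Cref{prop:kahane-circular-comparison} with \Cref{thm:circular-records}, using that \(\pi\mapsto\pi^{\mathrm{rev}}\) is a bijection of \(S_n\) and that \(C_\eta\cong C_\delta\) preserves the order polynomial. Your explicit check that both signs occur at least twice in \(\eta\) is a detail the paper leaves implicit, and the value-complementation fallback is unnecessary because the proposition is already stated in terms of Kahane's own \(\widehat{\operatorname{bl}}\).
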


\begin{proof}
Choose a cyclic presentation \(C=C_\delta\) and reflect it to
\(C_\eta\), where \(\eta_i=-\delta_{n-i}\).  Reflection is a bijection on
labelings and preserves the order polynomial.  The result follows by combining
\Cref{prop:kahane-circular-comparison} with
\Cref{thm:circular-records}.
\end{proof}

\begin{example}
For the four-vertex crown \(\eta=(+,-,+,-)\),
\[
\Omega(C_\eta;m)
=\sum_{a,c=1}^m\min(a,c)^2
=\frac{m(m+1)(m^2+m+1)}6.
\]
The cyclic record statistic therefore gives
\[
\sum_{\pi\in S_4}t^{\operatorname{crec}_\eta(\pi)}
=
4t+8t^2+8t^3+4t^4
=
4!\,\Omega(C_\eta;t).
\]
\end{example}

\section{Finite Bernstein transfers}\label{sec:finite-transfer}

We now discretize the Bernstein transfer.  Finite counting recurrences for a
rank-vector analogue of \(H_w^{(m)}\) give a transfer between rank-vector sums
and endpoint counts.  Its diagonal case is realized bijectively in
\Cref{sec:direct-bijection}.

\begin{definition}[Rank-vector record sums]\label[definition]{def:rank-vector-D}
Let \(w=w_1\cdots w_r\in\{+,-\}^r\), let \(m\ge1\), and let
\(1\le k\le r+1\).  For a permutation
\(\alpha=(a_1,\ldots,a_r)\) of the set \([r+1]\setminus\{k\}\), start with
threshold \(h_0=k\).  At step \(j\), declare a record if
\[
w_j=+\text{ and }a_j>h_{j-1},
\qquad\text{or}\qquad
w_j=-\text{ and }a_j<h_{j-1}.
\]
If a record occurs, set \(h_j=a_j\); otherwise set \(h_j=h_{j-1}\).  Let
\(N_w(\alpha;k)\) be the number of records declared in these \(r\) steps, not
including the initial threshold.  Define
\[
D_w^{(m)}(k)=\sum_{\alpha}m^{N_w(\alpha;k)},
\]
where the sum is over all permutations of \([r+1]\setminus\{k\}\).  For
indices outside \(1,\ldots,r+1\), we set \(D_w^{(m)}(k)=0\).
\end{definition}

\begin{lemma}[Entry from permutations]\label[lemma]{lem:finite-entry}
Let \(\eps\in\{+,-\}^{n-1}\), put $w=\eps_{n-1}\eps_{n-2}\cdots\eps_1$, and let \(1\le k\le n\).  Then, for every positive integer \(m\),
\[
\sum_{\substack{\pi\in S_n\\ \pi_n=k}}m^{\rec_\eps(\pi)}
=m\,D_w^{(m)}(k).
\]
Consequently,
\[
\sum_{\pi\in S_n}m^{\rec_\eps(\pi)}
=m\sum_{k=1}^nD_w^{(m)}(k).
\]
\end{lemma}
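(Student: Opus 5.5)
We prove \Cref{lem:finite-entry} by a direct bijection between permutations with a fixed last entry and the rank-vector sequences of \Cref{def:rank-vector-D}; no generating functions are needed. Fix \(k\in[n]\) and set \(r=n-1\). To each \(\pi\in S_n\) with \(\pi_n=k\) we attach the sequence \(\alpha(\pi)=(a_1,\ldots,a_r)\) with \(a_j=\pi_{n-j}\), that is, the remaining entries read from right to left. This map is a bijection from \(\{\pi\in S_n:\pi_n=k\}\) onto the permutations of \([r+1]\setminus\{k\}=[n]\setminus\{k\}\). The inverse is obvious: prepend \(k\) and reverse.

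Next we compare the two scans. By \Cref{prop:chain-threshold-equivalence}, \(\rec_\eps(\pi)\) is computed by the threshold scan. It starts with \(h=\pi_n=k\) and visits positions \(i=n-1,\ldots,1\). At position \(i=n-j\) the sign used is \(\eps_{n-j}=w_j\), and the entry inspected is \(\pi_{n-j}=a_j\). Hence the \(j\)-th step of the scan in \Cref{prop:chain-threshold-equivalence} has the same record condition as the \(j\)-th step in \Cref{def:rank-vector-D}, and both reset the threshold to the inspected value on a record. An induction on \(j\) shows that the thresholds \(h_j\) agree and that the record declarations agree step by step. Therefore \(\rec_\eps(\pi)=1+N_w(\alpha(\pi);k)\), where the extra \(1\) is the terminal record at position \(n\).

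Summing \(m^{\rec_\eps(\pi)}=m\cdot m^{N_w(\alpha(\pi);k)}\) over the bijection gives \(m\,D_w^{(m)}(k)\), which is the first identity. Summing this over \(k\in[n]\) partitions \(S_n\) by its last entry and gives the second identity. The only point needing care is the index bookkeeping that matches the letter \(w_j\) with the sign \(\eps_{n-j}\) of the inspected position. There is no real obstacle beyond this.
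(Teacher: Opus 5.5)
Your proposal is correct and follows essentially the same argument as the paper. Both proofs read \(\pi_{n-1},\ldots,\pi_1\) as a scan order of \([n]\setminus\{k\}\) with comparison word \(w\), match the threshold scan of \Cref{prop:chain-threshold-equivalence} step by step with \Cref{def:rank-vector-D}, and account for the terminal record by the factor \(m\); your version simply writes out the index bookkeeping \(w_j=\eps_{n-j}\) that the paper leaves implicit.
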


\begin{proof}
The position \(n\) is always a greedy \(\eps\)-record, so it contributes one
factor \(m\).  If \(\pi_n=k\), then the right-to-left scan of the remaining
positions reads a permutation of \([n]\setminus\{k\}\), and the comparison word
is \(w=\eps_{n-1}\cdots\eps_1\).  The rule in
\Cref{def:rank-vector-D} is exactly the threshold scan of
\Cref{prop:chain-threshold-equivalence}, with the terminal record removed.
Summing over all permutations with fixed terminal value \(k\) gives the first
identity, and summing over \(k\) gives the second.
\end{proof}

\begin{proposition}[Rank-vector recurrences]\label[proposition]{prop:rank-vector-recurrences}
Let \(v\in\{+,-\}^{r-1}\), so that \(+v\) and \(-v\) have length \(r\).  For
\(1\le k\le r+1\),
\begin{align}
D_{+v}^{(m)}(k)
&=(k-1)D_v^{(m)}(k-1)
  +m\sum_{j=k}^{r}D_v^{(m)}(j), \label{eq:D-plus}\\
D_{-v}^{(m)}(k)
&=(r+1-k)D_v^{(m)}(k)
  +m\sum_{j=1}^{k-1}D_v^{(m)}(j). \label{eq:D-minus}
\end{align}
\end{proposition}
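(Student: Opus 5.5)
Conditioning on the first inspected entry \(a_1\) gives the recurrence, by direct analogy with the derivation of \eqref{eq:H-plus} and \eqref{eq:H-minus}. Fix \(k\in[r+1]\) and a permutation \(\alpha=(a_1,\ldots,a_r)\) of \([r+1]\setminus\{k\}\). After the first step, the remaining word \(v\) is scanned on \((a_2,\ldots,a_r)\), which is a permutation of an \((r-1)\)-element set \([r+1]\setminus\{k,a_1\}\), starting from threshold \(h_1\). I will standardize this remaining data to \([r]\) by the order-preserving bijection \(\phi:[r+1]\setminus\{a_1\}\to[r]\), so that \(\phi(c)=c\) for \(c<a_1\) and \(\phi(c)=c-1\) for \(c>a_1\). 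The scan depends only on relative order, so \(N_v\) is unchanged when every value, threshold included, is replaced by its standardized rank among \(\{h_1\}\cup\{a_2,\ldots,a_r\}\). The one thing to handle carefully is which set the threshold is ranked in.

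Take \(w=+v\) first. If \(a_1<k\) there is no record, \(h_1=k\), and the remaining values are \([r+1]\setminus\{k,a_1\}\). Ranking the threshold \(k\) among \(\{k\}\cup\) these values gives rank \(\phi(k)=k-1\), because \(a_1<k\) is removed. The tail \((a_2,\ldots,a_r)\) then runs over all permutations of \([r]\setminus\{k-1\}\) after standardization, and this contributes \(D_v^{(m)}(k-1)\). There are \(k-1\) choices of \(a_1\), which gives the term \((k-1)D_v^{(m)}(k-1)\). This term is valid for \(k\ge2\), and for \(k=1\) it vanishes. If \(a_1>k\) there is a record, which gives a factor \(m\), and \(h_1=a_1\). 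The remaining values are \([r+1]\setminus\{k,a_1\}\), and together with the threshold \(a_1\) they form \([r+1]\setminus\{k\}\). I will standardize by \(\psi:[r+1]\setminus\{k\}\to[r]\), which sends \(a_1\) to \(j=a_1-1\in\{k,\ldots,r\}\). The tail again runs over all permutations of \([r]\setminus\{j\}\), so this case gives \(m\sum_{j=k}^{r}D_v^{(m)}(j)\). Adding the two cases gives \eqref{eq:D-plus}.

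For \(w=-v\) the argument is the same with the roles of the cases swapped. If \(a_1>k\) there is no record, and there are \(r+1-k\) such choices. The threshold \(k\) keeps rank \(k\) in \(\{k\}\cup([r+1]\setminus\{k,a_1\})\), since only a larger value is removed, so this case contributes \((r+1-k)D_v^{(m)}(k)\). If \(a_1<k\) there is a record with new threshold \(a_1\). Standardizing \([r+1]\setminus\{k\}\) sends \(a_1\) to \(j=a_1\in\{1,\ldots,k-1\}\), so this case contributes \(m\sum_{j=1}^{k-1}D_v^{(m)}(j)\). This gives \eqref{eq:D-minus}. Alternatively, \eqref{eq:D-minus} follows from \eqref{eq:D-plus} by value complementation \(c\mapsto r+2-c\), which swaps the signs as in \Cref{cor:sign-reversal} and sends \(D_{-v}(k)\) to \(D_{+(-v')}(r+2-k)\).

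The main obstacle is the bookkeeping in the standardization step. I have to check that the rank of the threshold is computed relative to the set \(\{h_1\}\cup\{a_2,\ldots,a_r\}\), not relative to \([r+1]\). I also have to check that the resulting map from tails to permutations of \([r]\setminus\{j\}\) is a bijection for each fixed \(a_1\), and that boundary indices such as \(k=1\) or \(k=r+1\) and the convention \(D=0\) outside \([1,r]\) are consistent. The base case \(r=1\), where \(v=\varnothing\) and \(D_\varnothing(1)=1\), serves as a sanity check.
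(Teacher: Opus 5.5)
Your proposal is correct and follows the paper's own proof: you condition on the first scanned value \(a_1\), split into the non-record and record cases, and standardize so that the threshold has rank \(k-1\) (respectively \(k\)) in the non-record case and rank \(j=a_1-1\) (respectively \(j=a_1\)) in the record case. Your attention to ranking the threshold within \(\{h_1\}\cup\{a_2,\ldots,a_r\}\) and to the boundary cases \(k=1\) and \(k=r+1\) supplies bookkeeping that the paper's shorter argument leaves implicit; the optional complementation route to the minus recurrence also works.
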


\begin{proof}
For \(D_{+v}^{(m)}(k)\), separate the first scanned value \(a\).  If \(a<k\),
then no record occurs.  After deleting \(a\) and standardizing the remaining
ordered set, the threshold \(k\) has rank \(k-1\).  There are \(k-1\) choices
for \(a\), giving the first term in \eqref{eq:D-plus}.  If \(a>k\), then a
record occurs, contributing the factor \(m\).  After deleting the old threshold
\(k\) and standardizing the ordered set \([r+1]\setminus\{k\}\), the new
threshold \(a\) has rank \(j=a-1\), where \(j\) ranges from \(k\) to \(r\).
This gives the sum in \eqref{eq:D-plus}.

For \(D_{-v}^{(m)}(k)\), the same first-step decomposition is reversed.  If
\(a>k\), no record occurs; deleting \(a\) leaves the threshold with rank \(k\),
and there are \(r+1-k\) choices.  If \(a<k\), a record occurs, contributes
\(m\), and the new threshold has rank \(j=a\) after standardization, with
\(1\le j<k\).  This gives \eqref{eq:D-minus}.
\end{proof}

For \(r\ge0\), write
\[
B_{r,k}(x)=\binom r{k-1}x^{k-1}(1-x)^{r+1-k},
\qquad 1\le k\le r+1,
\]
for the degree \(r\) Bernstein basis.

\begin{lemma}[Degree-raising identities]\label[lemma]{lem:degree-raising}
Let \(r\ge1\) and \(1\le j\le r\).  Then
\begin{align}
xB_{r-1,j}(x)&=\frac{j}{r}B_{r,j+1}(x),\nonumber\\
\int_x^1B_{r-1,j}(u)\dd u&=\frac1r\sum_{h=1}^{j}B_{r,h}(x),
\label{eq:degree-raising-plus}\\
(1-x)B_{r-1,j}(x)&=\frac{r+1-j}{r}B_{r,j}(x),\nonumber\\
\int_0^xB_{r-1,j}(u)\dd u&=\frac1r\sum_{h=j+1}^{r+1}B_{r,h}(x).
\label{eq:degree-raising-minus}
\end{align}
\end{lemma}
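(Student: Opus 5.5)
The two multiplicative identities are coefficient comparisons in the degree-\(r\) basis, and the two integral identities follow from the first pair once both sides are shown to share a derivative and a boundary value. We handle the \(+\)-pair first and obtain the \(-\)-pair by the symmetry \(x\mapsto 1-x\).

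\emph{Multiplicative identities.} We have
\[
xB_{r-1,j}(x)=\binom{r-1}{j-1}x^{j}(1-x)^{r-j},
\qquad
B_{r,j+1}(x)=\binom{r}{j}x^{j}(1-x)^{r-j}.
\]
These have the same monomial, so the first identity reduces to \(\binom{r-1}{j-1}=\frac jr\binom rj\), which is the absorption identity. Similarly,
\[
(1-x)B_{r-1,j}(x)=\binom{r-1}{j-1}x^{j-1}(1-x)^{r+1-j},
\]
and the third identity reduces to \(\binom{r-1}{j-1}=\frac{r+1-j}{r}\binom r{j-1}\). This follows from \(\binom r{j-1}=\frac{r}{r+1-j}\binom{r-1}{j-1}\).

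\emph{Integral identity \eqref{eq:degree-raising-plus}.} Write \(F(x)=r^{-1}\sum_{h=1}^{j}B_{r,h}(x)\) and \(G(x)=\int_x^1B_{r-1,j}(u)\dd u\). We show \(F=G\) in two steps.

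\begin{enumerate}[label=(\roman*)]
\item \emph{Boundary value.} Since \(j\le r\), every \(B_{r,h}\) with \(h\le j\) contains a factor \((1-x)^{r+1-h}\) with positive exponent, so \(F(1)=0=G(1)\).
\item \emph{Derivatives.} We use the standard Bernstein derivative formula \(B_{r,h}'=r(B_{r-1,h-1}-B_{r-1,h})\), with the convention \(B_{r-1,0}=0\). Summing over \(h=1,\dots,j\) telescopes:
\[
F'(x)=\sum_{h=1}^{j}\bigl(B_{r-1,h-1}-B_{r-1,h}\bigr)=-B_{r-1,j}(x)=G'(x).
\]
\end{enumerate}

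Alternatively, we could simply note that this is \eqref{eq:bernstein-plus} of \Cref{lem:bernstein-identities} in a different degree, but the telescoping argument is self-contained. The only point needing care is the derivative formula itself. For the edge terms it follows from direct differentiation of \(\binom r{h-1}x^{h-1}(1-x)^{r+1-h}\), together with the identities \(\binom r{h-1}(h-1)=r\binom{r-1}{h-2}\) and \(\binom r{h-1}(r+1-h)=r\binom{r-1}{h-1}\).

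\emph{Identity \eqref{eq:degree-raising-minus}.} Substitute \(x\mapsto1-x\), using \(B_{r,h}(1-x)=B_{r,r+2-h}(x)\) and \(B_{r-1,j}(1-x)=B_{r-1,r+1-j}(x)\). Then \(\int_0^xB_{r-1,j}(u)\dd u\) becomes \(\int_{1-x}^1B_{r-1,r+1-j}(u)\dd u\). Applying \eqref{eq:degree-raising-plus} with index \(r+1-j\in[1,r]\) gives \(\frac1r\sum_{h=1}^{r+1-j}B_{r,h}(1-x)\). Reindexing this as \(h\mapsto r+2-h\) yields \(\frac1r\sum_{h=j+1}^{r+1}B_{r,h}(x)\).

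We expect no real obstacle. The only step needing attention is bookkeeping of the index shifts and boundary conventions in the telescoping and reflection steps.
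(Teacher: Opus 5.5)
Your proof is correct and follows the paper's route: coefficient comparison via binomial ratios for the product identities, and a telescoping derivative plus a boundary check for the integral identities; the only difference is that you obtain \eqref{eq:degree-raising-minus} from \eqref{eq:degree-raising-plus} by the reflection $x\mapsto 1-x$, where the paper checks it directly at $x=0$. Your unused aside that \eqref{eq:degree-raising-plus} is simply \eqref{eq:bernstein-plus} in another degree is not accurate, since that identity stays within one degree and carries the extra term $x\,b_{m,\ell}(x)$, so it is best dropped.
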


\begin{proof}
The two product identities follow from the definition of \(B_{r,k}\) and the
binomial ratios
$$\frac{\binom{r-1}{j-1}}{\binom rj}=\frac{j}{r},\quad
\frac{\binom{r-1}{j-1}}{\binom r{j-1}}=\frac{r+1-j}{r}.$$
The two integral identities follow by differentiating the displayed sums, which telescope, and checking
the value at \(x=1\), respectively \(x=0\).
\end{proof}

The next lemma packages the rank-vector sums in the Bernstein basis.  Here and
in the rest of this section, \(H_w^{(m)}\) denotes the polynomial determined
by the recurrences \eqref{eq:H-empty}--\eqref{eq:H-minus}.

\begin{lemma}[Bernstein packaging of rank vectors]\label[lemma]{lem:rank-packaging}
For every \(m\ge1\), every \(r\ge0\), and every word \(w\in\{+,-\}^r\),
\[
r!\,H_w^{(m)}(x)=\sum_{k=1}^{r+1}D_w^{(m)}(k)\,B_{r,k}(x).
\]
\end{lemma}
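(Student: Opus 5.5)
We prove the identity by induction on the length \(r\) of \(w\), using the recurrences \eqref{eq:H-empty}--\eqref{eq:H-minus} for \(H\), \Cref{prop:rank-vector-recurrences} for \(D\), and \Cref{lem:degree-raising} to convert degree-\((r-1)\) Bernstein terms into degree-\(r\) ones.

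\emph{Base case.} For \(r=0\) we have \(D_\varnothing^{(m)}(1)=1\), since the empty permutation contributes \(m^0\), and \(B_{0,1}=1\). Hence both sides equal \(1=H_\varnothing^{(m)}\).

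\emph{Inductive step for \(+v\).} Suppose \(v\) has length \(r-1\) and
\[
(r-1)!\,H_v^{(m)}=\sum_{j=1}^{r}D_v^{(m)}(j)B_{r-1,j}.
\]
Multiply \eqref{eq:H-plus} by \(r!=r\cdot(r-1)!\) and substitute this expansion. Each \(j\) then contributes
\[
D_v^{(m)}(j)\Bigl(r\,xB_{r-1,j}(x)+m\,r\int_x^1B_{r-1,j}(u)\dd u\Bigr).
\]
By \Cref{lem:degree-raising}, the first term becomes \(jB_{r,j+1}(x)\). By \eqref{eq:degree-raising-plus}, the second becomes \(m\sum_{h=1}^{j}B_{r,h}(x)\).

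Now collect the coefficient of \(B_{r,k}\) for \(1\le k\le r+1\):
\begin{itemize}
\item the first term contributes \((k-1)D_v^{(m)}(k-1)\), via \(j=k-1\);
\item the second contributes \(m\sum_{j=k}^{r}D_v^{(m)}(j)\), via all \(j\ge k\) with \(j\le r\).
\end{itemize}
This is exactly \eqref{eq:D-plus}. The boundary cases work because of the convention \(D_v^{(m)}(0)=0\), which is also multiplied by \(0\), and because the sum is empty when \(k=r+1\).

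\emph{Inductive step for \(-v\).} Use \eqref{eq:H-minus} in the same way. The product identity gives \((r+1-j)B_{r,j}\), and \eqref{eq:degree-raising-minus} gives \(m\sum_{h=j+1}^{r+1}B_{r,h}\). The coefficient of \(B_{r,k}\) is therefore
\[
(r+1-k)D_v^{(m)}(k)+m\sum_{j=1}^{k-1}D_v^{(m)}(j),
\]
which is \eqref{eq:D-minus}. Here the term with \(k=r+1\) vanishes on both sides, since \(D_v^{(m)}(r+1)=0\) lies outside the index range for \(v\).

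\emph{Where care is needed.} The only delicate point is the index bookkeeping in these coefficient extractions. One must check that the shifted ranges \(j\mapsto j+1\) and \(h\le j\) or \(h>j\) produce the right ranges and boundary terms. One must also confirm that the factor \(r\) cancels the \(1/r\) in \Cref{lem:degree-raising}. Linear independence of the \(B_{r,k}\) is not needed, since we directly exhibit the expansion.
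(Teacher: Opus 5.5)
Your proposal is correct and is essentially the paper's own proof: induction on the length of \(w\), substituting the inductive expansion into the \(H\)-recurrence, converting to degree \(r\) via \Cref{lem:degree-raising}, and matching the coefficient of \(B_{r,k}\) with the recurrences of \Cref{prop:rank-vector-recurrences}. Your handling of the boundary indices, using \(D_v^{(m)}(0)=D_v^{(m)}(r+1)=0\), is accurate, and so is your remark that no linear-independence argument is needed.
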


\begin{proof}
We induct on the length of \(w\).  For \(w=\varnothing\), both sides equal
\(1\), since \(D_\varnothing^{(m)}(1)=1\) and \(B_{0,1}=1\).

Let \(w=+v\), where \(v\) has length \(r-1\ge0\), and assume the statement
for \(v\).  By \eqref{eq:H-plus} and the induction hypothesis,
\[
r!\,H_{+v}^{(m)}(x)
=r\sum_{j=1}^{r}D_v^{(m)}(j)
\left(xB_{r-1,j}(x)+m\int_x^1B_{r-1,j}(u)\dd u\right).
\]
By the degree-raising identities \eqref{eq:degree-raising-plus}, the
right-hand side equals
\[
\begin{aligned}
&\sum_{j=1}^{r}D_v^{(m)}(j)
\left(jB_{r,j+1}(x)+m\sum_{h=1}^{j}B_{r,h}(x)\right)\\
&=\sum_{k=1}^{r+1}
\left((k-1)D_v^{(m)}(k-1)+m\sum_{j=k}^{r}D_v^{(m)}(j)\right)B_{r,k}(x),
\end{aligned}
\]
where the coefficient of \(B_{r,k}\) collects the term \(j=k-1\) from the
first summand and the terms \(j\ge k\) from the second.  By
\eqref{eq:D-plus}, this coefficient is \(D_{+v}^{(m)}(k)\).

The case \(w=-v\) is identical, using \eqref{eq:H-minus},
\eqref{eq:degree-raising-minus}, and \eqref{eq:D-minus}.
\end{proof}

The following positive kernel converts endpoint counts into rank-vector sums.

\begin{definition}\label[definition]{def:kappa-kernel}
For \(r\ge m-1\), \(1\le k\le r+1\), and \(1\le\ell\le m\), set
\[
\kappa_r^{(m)}(k,\ell)
=(k-1)!(r+1-k)!
  \binom{m-1}{\ell-1}\binom{r-m+1}{k-\ell}.
\]
We use the convention that \(\binom ab=0\) when \(b<0\) or \(b>a\).
\end{definition}

\begin{lemma}[Degree elevation for the kernel]\label[lemma]{lem:kappa-degree-elevation}
If \(r\ge m-1\), then
\[
b_{m,\ell}(x)
=\sum_{k=1}^{r+1}\frac{\kappa_r^{(m)}(k,\ell)}{r!}\,B_{r,k}(x)
\]
for every \(1\le\ell\le m\).
\end{lemma}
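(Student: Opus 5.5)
We prove the identity by multiplying \(b_{m,\ell}\) by the constant function \(1=(x+(1-x))^{r-m+1}\) and reading off coefficients in the degree-\(r\) Bernstein basis. This is the standard Bernstein degree-elevation argument. Write \(s=r-m+1\ge0\), which is exactly where the hypothesis \(r\ge m-1\) enters.

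First, expand
\[
b_{m,\ell}(x)=\binom{m-1}{\ell-1}x^{\ell-1}(1-x)^{m-\ell}\sum_{i=0}^{s}\binom{s}{i}x^{i}(1-x)^{s-i}.
\]
Second, re-index by \(k=\ell+i\). A typical term is
\[
\binom{m-1}{\ell-1}\binom{s}{k-\ell}x^{k-1}(1-x)^{r+1-k}.
\]
The exponents are correct: \((\ell-1)+(k-\ell)=k-1\) and \((m-\ell)+(s-k+\ell)=r+1-k\). As \(i\) runs over \(0,\dots,s\), the index \(k\) runs over \(\ell,\dots,\ell+s\), a subrange of \([1,r+1]\). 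Outside this subrange the binomial convention of \Cref{def:kappa-kernel} makes \(\binom{s}{k-\ell}\) vanish, so we may sum over all \(1\le k\le r+1\).

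Third, convert each monomial to the basis using
\[
x^{k-1}(1-x)^{r+1-k}=B_{r,k}(x)\Big/\binom{r}{k-1}
\qquad\text{and}\qquad
\frac{1}{\binom{r}{k-1}}=\frac{(k-1)!\,(r+1-k)!}{r!}.
\]
The coefficient of \(B_{r,k}\) is then
\[
\frac{(k-1)!\,(r+1-k)!}{r!}\binom{m-1}{\ell-1}\binom{r-m+1}{k-\ell}
=\frac{\kappa_r^{(m)}(k,\ell)}{r!},
\]
which is the claim.

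There is no real obstacle here. The only points needing care are the boundary range of \(k\) (handled by the binomial convention) and the requirement \(s\ge0\), which is the stated hypothesis \(r\ge m-1\). As a sanity check, the case \(r=m-1\) gives \(\kappa=(k-1)!(m-k)!\binom{m-1}{\ell-1}\delta_{k\ell}\), so the coefficient reduces to \(\delta_{k\ell}\) and \(B_{m-1,k}=b_{m,k}\), as expected.
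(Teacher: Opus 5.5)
Your proof is correct and is essentially the paper's argument: the paper expands the right-hand side, factors out \(\binom{m-1}{\ell-1}x^{\ell-1}(1-x)^{m-\ell}\), and uses the binomial theorem on the remaining sum over \(k\), which is your computation run in the opposite direction. Your index bookkeeping, the range of \(k\), and the role of \(r\ge m-1\) all check out.
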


\begin{proof}
This is the standard degree elevation of the Bernstein basis
\cite{Farouki}, written in the present normalization.  Substituting the definitions gives
\[
\frac{\kappa_r^{(m)}(k,\ell)}{r!}B_{r,k}(x)
=\binom{m-1}{\ell-1}\binom{r-m+1}{k-\ell}
  x^{k-1}(1-x)^{r+1-k}.
\]
After summing over \(k\), factor out
\(\binom{m-1}{\ell-1}x^{\ell-1}(1-x)^{m-\ell}\).  The remaining sum is
\[
\sum_{a=0}^{r-m+1}\binom{r-m+1}{a}x^a(1-x)^{r-m+1-a}=1
\]
by the binomial theorem.  The result is exactly \(b_{m,\ell}(x)\).
\end{proof}

Combining the packaging lemma with the transfer lemma and degree elevation
gives the finite transfer directly.

\begin{theorem}[Finite Bernstein transfer]\label{thm:finite-transfer-identity}
Let \(m\ge1\), and let \(w\in\{+,-\}^r\) with \(r\ge m-1\).  Then, for
\(1\le k\le r+1\),
\[
D_w^{(m)}(k)
=\sum_{\ell=1}^{m}C_w^{(m)}(\ell)\,\kappa_r^{(m)}(k,\ell).
\]
\end{theorem}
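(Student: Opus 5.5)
The plan is to express the single polynomial \(r!\,H_w^{(m)}(x)\) in the degree-\(r\) Bernstein basis in two different ways and then compare coefficients. First, \Cref{lem:rank-packaging} says that the coefficients of \(r!\,H_w^{(m)}\) in the basis \(\{B_{r,k}\}_{k=1}^{r+1}\) are exactly \(D_w^{(m)}(k)\). Second, \Cref{lem:transfer} writes \(H_w^{(m)}=\sum_{\ell=1}^m C_w^{(m)}(\ell)\,b_{m,\ell}\) in the degree-\((m-1)\) basis. The hypothesis \(r\ge m-1\) is exactly what is needed to apply \Cref{lem:kappa-degree-elevation}, which elevates each \(b_{m,\ell}\) to degree \(r\).

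Concretely, multiply the transfer identity by \(r!\) and substitute the degree-elevation formula:
\[
r!\,H_w^{(m)}(x)=\sum_{\ell=1}^m C_w^{(m)}(\ell)\sum_{k=1}^{r+1}\kappa_r^{(m)}(k,\ell)\,B_{r,k}(x)
=\sum_{k=1}^{r+1}\Bigl(\sum_{\ell=1}^m C_w^{(m)}(\ell)\,\kappa_r^{(m)}(k,\ell)\Bigr)B_{r,k}(x).
\]
Setting this equal to the packaging expansion gives two expansions of the same polynomial in \(\{B_{r,k}\}_{k=1}^{r+1}\).

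The only point needing justification is that these polynomials form a basis of the polynomials of degree at most \(r\). This holds because there are \(r+1\) of them and they span: expanding \(1=(x+(1-x))^r\) and multiplying by powers of \(x\) recovers every monomial. Equivalently, in the variable \(y=x/(1-x)\), the factor \(x^{k-1}(1-x)^{r+1-k}=(1-x)^r y^{k-1}\) shows linear independence. Hence coefficients agree, which is the claimed identity for each \(1\le k\le r+1\).

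I do not expect a real obstacle, since all the analytic content sits in the cited lemmas. The one thing to double-check is the indexing: the ranges \(k\in[1,r+1]\) in \Cref{lem:rank-packaging} and \Cref{lem:kappa-degree-elevation} must match, and the binomial convention must make \(\kappa\) vanish correctly outside the support.
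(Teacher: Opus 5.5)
Your proposal is correct and is essentially the paper's own proof: you write \(r!\,H_w^{(m)}\) once via the transfer lemma followed by degree elevation (which is where \(r\ge m-1\) enters) and once via the rank-vector packaging lemma, then compare coefficients in the basis \(B_{r,1},\ldots,B_{r,r+1}\). One cosmetic point: your spanning sketch should expand \(x^j=x^j\bigl(x+(1-x)\bigr)^{r-j}\) rather than multiply the expansion of \(\bigl(x+(1-x)\bigr)^r\) by \(x^j\), but your independence argument via \(y=x/(1-x)\) already settles the basis claim by itself.
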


\begin{proof}
By \Cref{lem:transfer} and \Cref{lem:kappa-degree-elevation},
\[
r!\,H_w^{(m)}(x)
=r!\sum_{\ell=1}^{m}C_w^{(m)}(\ell)\,b_{m,\ell}(x)
=\sum_{k=1}^{r+1}
\left(\sum_{\ell=1}^{m}C_w^{(m)}(\ell)\,\kappa_r^{(m)}(k,\ell)\right)
B_{r,k}(x).
\]
By \Cref{lem:rank-packaging}, the left-hand side is also
\[
\sum_{k=1}^{r+1}D_w^{(m)}(k)B_{r,k}(x).
\]
The polynomials \(B_{r,1},\ldots,B_{r,r+1}\) form a basis of the space of
polynomials of degree at most \(r\), so the coefficients agree.
\end{proof}

\begin{corollary}[Diagonal case]\label[corollary]{cor:diagonal-case}
Let \(m\ge1\) and \(w\in\{+,-\}^{m-1}\).  Then
\[
D_w^{(m)}(k)=(m-1)!\,C_w^{(m)}(k)
\qquad(1\le k\le m).
\]
Equivalently, in the notation of \Cref{lem:finite-entry}: for \(n\ge1\),
\(\eps\in\{+,-\}^{n-1}\), \(w=\eps_{n-1}\cdots\eps_1\), and \(1\le k\le n\),
\[
\sum_{\substack{\pi\in S_n\\ \pi_n=k}}n^{\rec_\eps(\pi)}
=n!\,C_w^{(n)}(k).
\]
\end{corollary}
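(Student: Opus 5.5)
We specialize \Cref{thm:finite-transfer-identity} to the boundary case \(r=m-1\), which the hypothesis \(r\ge m-1\) allows. The main work is evaluating the kernel \(\kappa_{m-1}^{(m)}(k,\ell)\) from \Cref{def:kappa-kernel}. With \(r=m-1\) we have \(r-m+1=0\), so \(\binom{0}{k-\ell}\) equals \(\delta_{k,\ell}\). The kernel is therefore diagonal:
\[
\kappa_{m-1}^{(m)}(k,\ell)=\delta_{k,\ell}\,(k-1)!\,(m-k)!\binom{m-1}{k-1}=\delta_{k,\ell}\,(m-1)!.
\]
Substituting this into \Cref{thm:finite-transfer-identity} collapses the sum over \(\ell\) to the single term \(\ell=k\). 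This gives \(D_w^{(m)}(k)=(m-1)!\,C_w^{(m)}(k)\) for \(1\le k\le m\), since here \(r+1=m\) and the index ranges agree. The same conclusion can be read off directly: with \(r=m-1\), the two Bernstein bases \(b_{m,k}\) and \(B_{m-1,k}\) coincide. Comparing \Cref{lem:transfer} with \Cref{lem:rank-packaging} then gives the identity by uniqueness of coefficients in a basis.

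For the equivalent permutation form, take \(m=n\) and \(w=\eps_{n-1}\cdots\eps_1\), a word of length \(n-1=m-1\), so the first part applies. By \Cref{lem:finite-entry} with \(m=n\),
\[
\sum_{\pi_n=k}n^{\rec_\eps(\pi)}=n\,D_w^{(n)}(k).
\]
By the first part this equals \(n\,(n-1)!\,C_w^{(n)}(k)=n!\,C_w^{(n)}(k)\).

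There is no real obstacle here. The only points needing care are that \(\binom{0}{k-\ell}\) vanishes for \(k\ne\ell\) under the stated binomial convention, and that the index range \(1\le k\le r+1\) matches \(1\le k\le m\) when \(r=m-1\). One may also note that, by \Cref{lem:endpoint-order}, \(C_w^{(n)}(k)\) counts order-preserving maps \(P_\eps\to[n]\) with \(f(x_n)=k\).
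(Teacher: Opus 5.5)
Your proposal is correct and follows the paper's own proof: you set \(r=m-1\) in \Cref{thm:finite-transfer-identity}, observe that \(\binom{0}{k-\ell}=\delta_{k,\ell}\) makes \(\kappa_{m-1}^{(m)}(k,\ell)=(m-1)!\,\delta_{k,\ell}\), and get the permutation form by taking \(m=n\) and multiplying by \(n\) via \Cref{lem:finite-entry}. Your side remark that \(b_{m,k}=B_{m-1,k}\), so that \Cref{lem:transfer} and \Cref{lem:rank-packaging} can be compared directly, is also valid; it is the same argument with the trivial degree elevation made explicit.
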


\begin{proof}
Take \(r=m-1\) in \Cref{thm:finite-transfer-identity}: the factor
\(\binom{r-m+1}{k-\ell}=\binom0{k-\ell}\) in \Cref{def:kappa-kernel} vanishes
unless \(k=\ell\), so \(\kappa_{m-1}^{(m)}(k,\ell)=(m-1)!\,\delta_{k,\ell}\).
The permutation form is the case \(m=n\), \(r=n-1\) of the first identity,
multiplied by \(n\) using \Cref{lem:finite-entry}.
\end{proof}

Integrating the degree-elevation identity in
\Cref{lem:kappa-degree-elevation} gives the column sum
\[
\sum_{k=1}^{r+1}\kappa_r^{(m)}(k,\ell)=\frac{(r+1)!}{m}.
\]
Here we used
\(\int_0^1B_{r,k}(x)\dd x=1/(r+1)\) and
\(\int_0^1b_{m,\ell}(x)\dd x=1/m\).
Consequently the finite transfer gives a second, probability-free proof of
\Cref{thm:main-general}.  Indeed, for
\(w=\eps_{n-1}\cdots\eps_1\) and \(1\le m\le n\),
\[
\begin{aligned}
\sum_{\pi\in S_n}m^{\rec_\eps(\pi)}
&=m\sum_{k=1}^{n}D_w^{(m)}(k)\\
&=m\sum_{\ell=1}^{m}C_w^{(m)}(\ell)
  \sum_{k=1}^{n}\kappa_{n-1}^{(m)}(k,\ell)\\
&=m\sum_{\ell=1}^{m}C_w^{(m)}(\ell)\frac{(n-1)!n}{m}\\
&=n!\,\Omega(P_\eps;m).
\end{aligned}
\]
The two degree-\(\le n\) polynomials also agree at \(m=0\), so interpolation
completes the argument.

\section{A bijective form of the finite transfer}
\label{sec:direct-bijection}

We construct an object-level bijection between pairs \((\sigma,f)\), with
\(\sigma\in S_n\) and \(f:P_\eps\to[m]\) order-preserving, and pairs
\((\pi,\lambda)\), with \(\pi\in S_n\) and
\(\lambda:\operatorname{Rec}_\eps(\pi)\to[m]\).  A weak map \(f\) has
\(\prod_{a\in[m]}|f^{-1}(a)|!\) linear refinements inside its level fibers,
so direct rank discretization is not fiberwise uniform.  The window kernel
records this ordering data.

The construction has three ingredients: a window model for the
degree-elevation kernel, a local window exchange, and a hole construction for
the diagonal identity
\(D_w^{(m)}(k)=(m-1)!\,C_w^{(m)}(k)\).
\Cref{thm:direct-bijection-m-le-n} gives a direct recursive bijection when
\(m\le n\); \Cref{thm:direct-bijection-all-m} extends it algorithmically to
arbitrary alphabets by a finite used-label sieve.

Throughout this section, if \(A\) is a finite totally ordered set and
\(a\in A\), we write
\[
\operatorname{std}_{A\setminus\{a\}}:A\setminus\{a\}\longrightarrow [|A|-1]
\]
for the increasing bijection.  When \(A=[N]\), this is the map
\[
x\longmapsto
\begin{cases}
x, & x<a,\\
x-1, & x>a.
\end{cases}
\]
We use the inverse map without further comment when we pass back from a
standardized set to the original set.

Throughout the constructions below, an instruction to insert an entry in the
\(b\)-th position of a word or board of length \(L-1\) means that the entry is
inserted after the first \(b-1\) entries, so that it occupies position \(b\) in
the resulting word or board of length \(L\).  Thus \(b=1\) means insertion at
the beginning and \(b=L\) means insertion at the end.

Let \(N\ge m\).  For \(1\le k\le N\) and \(1\le \ell\le m\), define
\(\mathcal K_{N,m}(k,\ell)\) to be the set of words $\omega=(k;r_1,r_2,\ldots,r_{N-1})$, whose entries are the elements of \([N]\), with first entry \(k\), such that
\(k\) has rank \(\ell\) in the \(m\)-window $k,r_1,\ldots,r_{m-1}$. Equivalently,
\[
\ell=1+\#\{1\le s\le m-1:r_s<k\}.
\]
We use the convention that \(\mathcal K_{N,m}(k,\ell)=\varnothing\) whenever
one of the indices is outside its natural range.

Direct counting gives
\[
|\mathcal K_{N,m}(k,\ell)|
=
(k-1)!(N-k)!
\binom{m-1}{\ell-1}\binom{N-m}{k-\ell}
=
\kappa_{N-1}^{(m)}(k,\ell).
\]
Indeed, choose the \(\ell-1\) window elements below \(k\) and the
\(m-\ell\) window elements above \(k\), then order the window and its
complement.  The count is
\[
\binom{k-1}{\ell-1}
\binom{N-k}{m-\ell}
(m-1)!(N-m)!,
\]
which simplifies to the displayed kernel.

The following local exchange realizes one step of the finite transfer.  First
suppose \(N>m\), so that the buffer entry \(r_m\) exists.

\begin{lemma}[The \(+\)-window exchange]
\label[lemma]{lem:plus-window-exchange}
Fix \(N>m\), \(1\le k\le N\), and \(1\le j\le m\).  There is an explicit
bijection
\[
\begin{aligned}
\bigsqcup_{\ell\le j}\mathcal K_{N,m}(k,\ell)
&\longleftrightarrow
\bigl(\{1,\ldots,k-1\}\times\mathcal K_{N-1,m}(k-1,j)\bigr)\\
&\qquad\sqcup
\left([m]\times\bigsqcup_{q=k}^{N-1}\mathcal K_{N-1,m}(q,j)\right).
\end{aligned}
\]
\end{lemma}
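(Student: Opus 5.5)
The plan is to build the bijection by a ``first-step'' decomposition that mirrors the $+$-recurrence \eqref{eq:D-plus}, with the $m$-window playing the role of the Bernstein factor. A counting check guides the construction. The left side has $\sum_{\ell\le j}\kappa_{N-1}^{(m)}(k,\ell)$ elements. The right side has $(k-1)\kappa_{N-2}^{(m)}(k-1,j)+m\sum_{q\ge k}\kappa_{N-2}^{(m)}(q,j)$ elements. Equality of these two counts is the coefficient form of \eqref{eq:bernstein-plus} after degree elevation by \Cref{lem:kappa-degree-elevation} and the raising identities of \Cref{lem:degree-raising}. I would verify this equality first, since it fixes the block sizes the bijection must respect.

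\textbf{Second block (the inverse map).} Given $(c,\omega')$ with $c\in[m]$ and $\omega'=(q;r'_1,\ldots,r'_{N-2})\in\mathcal K_{N-1,m}(q,j)$, $q\ge k$, I would do the following.
\begin{enumerate}[label=(\arabic*)]
\item Unstandardize $\omega'$ into $[N]\setminus\{k\}$, so that $q$ becomes $q+1>k$.
\item Prepend the new first entry $k$.
\item Insert the old head $q+1$ into the window, at the $c$-th of its $m$ available slots; this is where the factor $m$ enters.
\end{enumerate}
The insertion pushes one old window entry into the buffer, which is why the hypothesis $N>m$ (existence of $r_m$) is needed. The rank $\ell$ of $k$ in the new window is checked as follows.
\begin{itemize}
\item The new window consists of $q+1$, which lies above $k$, together with $m-2$ of the $m-1$ old window entries.
\item Exactly $j-1$ of the old entries lie below $q$, so at most $j-1$ of the retained entries lie below $k$.
\end{itemize}
Hence $\ell\le j$. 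To show this map is injective and to identify its image, I must recover the three pieces of data $c$, $q$, and the pushed-out entry from the resulting word. The pushed-out entry sits at buffer position $m$, so its position is recoverable.

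\textbf{First block.} Pairs $(a,\omega')$ with $a<k$ and $\omega'\in\mathcal K_{N-1,m}(k-1,j)$ correspond to the non-record case. The natural move is to unstandardize $\omega'$ around $a$ and insert $a$ as a window entry below $k$, again displacing one entry to the buffer. This raises the rank of $k$ from $j$ to $j+1$ unless the displaced entry lies below $k$. The first block therefore cannot be matched verbatim.

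I expect the main obstacle to be the rank bookkeeping, because the right side has the fixed rank $j$ while the left side has the cumulative condition $\ell\le j$. My first attempt at a fix is a sieve on whether the displaced buffer entry lies below or above $k$, as follows.
\begin{itemize}
\item When the displaced entry lies below $k$, the insertion of $a$ preserves rank $j$; these words go to the first block.
\item The remaining words fall in the range $\ell<j$. I would reroute them through the second block by treating the entry that moved into the buffer as the ``old head $q+1$.''
\end{itemize}
If this direct split fails to partition the left side cleanly, the fallback is induction on $j$. Pass from $j-1$ to $j$ by matching the single new layer $\mathcal K_{N,m}(k,j)$ against the difference of the right-hand sides at $j$ and $j-1$. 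That difference is controlled by the derivative form of \eqref{eq:bernstein-plus} used in the proof of \Cref{lem:bernstein-identities}, and it can be realized by moving one window entry across the threshold.
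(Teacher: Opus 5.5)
\textbf{There is a genuine gap: the first block is never constructed, and no rule assigns each word to a block.}

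Your second block is exactly the inverse of the paper's record branch, and your bound $\ell\le j$ there is correct. But the proposal stops short of a bijection. The first block is never defined, and you give no rule that decides, from a word $\omega$, which block it comes from. So neither injectivity nor surjectivity is established.

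The missing idea is that in the non-record case the element $a<k$ should not enter the window at all. Given $(a,\omega')$ with $\omega'\in\mathcal K_{N-1,m}(k-1,j)$, unstandardize and insert $a$ at position $m$ of the rest word. This is the buffer slot, which exists because $N>m$. The first $m$-window is then literally unchanged, so $k$ keeps rank $j$. The image is exactly the set of words with $\ell=j$ and $r_m<k$.

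Inserting $a$ into one of the $m-1$ window slots, as you try, has two defects. It shifts the rank of $k$. It also introduces a slot choice with no counterpart on the right, since the first block carries no factor $m$. This is why your sieve cannot partition cleanly. The sieve and the induction-on-$j$ fallback are only sketched, and they are unnecessary.

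The forward map is then read off the extended window $S=(r_1,\ldots,r_m)$. If $\ell=j$ and $r_m<k$, delete $r_m$. Otherwise, let $a$ be the $j$-th smallest entry of $S$ and $b\in[m]$ its position. In that case $S$ has at most $j-1$ entries below $k$, so $a>k$. Moving $a$ to the front in place of $k$ leaves the window set equal to $S$. Hence the new head has rank $j$ and standardized value $q=a-1\ge k$.

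This is also what your injectivity step lacks. In the image of your second block, $q+1$ is recovered as the $j$-th smallest entry of $S$, because the $m-1$ old window entries contain exactly $j-1$ entries below $q+1$. Its position then recovers $c$. The position of the ``pushed-out entry'' recovers neither of these. Moreover, when $c=m$ no old window entry is pushed out at all.

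The same fact gives disjointness of the two images. A second-block word with $\ell=j$ and $r_m<k$ would have $j$ entries of $S$ below $k<q+1$. That contradicts $q+1$ being the $j$-th smallest entry.

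Finally, the cardinality identity you planned to verify first would only yield some bijection. The lemma asserts an explicit one.
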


\begin{proof}
Take $\omega=(k;r_1,\ldots,r_{N-1})\in\mathcal K_{N,m}(k,\ell), \ell\le j$, and set
\[
\rho=1+\#\{1\le s\le m-1:r_s<k\}.
\]
Thus \(\rho=\ell\le j\).  Let $S=(r_1,\ldots,r_m)$ be the extended window.

If $\rho=j$ and $r_m<k$, we are in the non-record case.  Put \(a=r_m\).  Delete \(a\) from \(\omega\)
and standardize the remaining entries.  The threshold \(k\) becomes \(k-1\),
and the first \(m\)-window still has threshold rank \(j\).  Hence the output is
\[
(a,\omega')\in
\{1,\ldots,k-1\}\times\mathcal K_{N-1,m}(k-1,j).
\]

In all other cases, let \(a\) be the \(j\)-th smallest element of the extended
window \(S\), and let \(b\in[m]\) be its position in \(S\).  We claim that
\(a>k\).  Indeed, if \(r_m>k\), then the extended window has at most \(j-1\)
entries smaller than \(k\).  If \(r_m<k\), then failure of the non-record case
forces \(\rho<j\), and again the extended window has at most \(j-1\) entries
smaller than \(k\).  Thus the \(j\)-th smallest entry of \(S\) is larger than
\(k\).

This is the record case.  Delete the old threshold \(k\), put the new threshold
\(a\) in front, remove \(a\) from the rest word, and standardize.  Since
\(a>k\), the new threshold has standardized value $q=a-1, k\le q\le N-1$. The first \(m\)-window after this operation has the same underlying set as
\(S\), hence the new threshold has rank \(j\).  We output
\[
(b,\omega')\in
[m]\times \mathcal K_{N-1,m}(q,j).
\]

The inverse map is explicit.  In the non-record branch, start with
\[
a<k,\qquad \omega'\in\mathcal K_{N-1,m}(k-1,j).
\]
Unstandardize by inserting \(a\), so that the threshold \(k-1\) becomes \(k\),
and insert \(a\) as the \(m\)-th entry of the rest word.  This recovers the
unique \(\omega\) which falls into the non-record case.

In the record branch, start with
\[
b\in[m],\qquad
\omega'\in\mathcal K_{N-1,m}(q,j),\qquad k\le q\le N-1.
\]
Unstandardize by inserting the old threshold \(k\).  The current threshold
\(q\) becomes \(a=q+1\).  Remove this leading \(a\), put \(k\) in front, and
insert \(a\) in the \(b\)-th position of the rest word.  Its extended window
has \(a>k\) as its \(j\)-th smallest entry.  Every entry below \(k\) is below
\(a\), so the rank \(\ell\) of \(k\) in the reconstructed first window
satisfies \(\ell\le j\); hence the object lies in the domain on the left.
It cannot satisfy simultaneously \(\ell=j\) and \(r_m<k\), because then the
extended window would contain at least \(j\) entries below \(k<a\), contrary
to \(a\) being its \(j\)-th smallest entry.  Thus the reconstruction lies in
the record branch, with the prescribed position \(b\).  The two inverse
constructions are therefore well-defined and inverse to the forward branches.
\end{proof}

\begin{corollary}[The \(-\)-window exchange]
\label[corollary]{lem:minus-window-exchange}
Fix \(N>m\), \(1\le k\le N\), and \(1\le j\le m\).  There is an explicit
bijection
\[
\begin{aligned}
\bigsqcup_{\ell\ge j}\mathcal K_{N,m}(k,\ell)
&\longleftrightarrow
\bigl(\{k+1,\ldots,N\}\times\mathcal K_{N-1,m}(k,j)\bigr)\\
&\qquad\sqcup
\left([m]\times\bigsqcup_{q=1}^{k-1}\mathcal K_{N-1,m}(q,j)\right).
\end{aligned}
\]
\end{corollary}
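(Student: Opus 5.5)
We plan to deduce the \(-\)-window exchange from \Cref{lem:plus-window-exchange} by value complementation, so that no new case analysis is needed. On words whose entries are the elements of \([N]\), define the complement \(c_N(x)=N+1-x\), applied entrywise to \(\omega=(k;r_1,\ldots,r_{N-1})\). If \(k\) has rank \(\ell\) in the window \(k,r_1,\ldots,r_{m-1}\), then \(N+1-k\) has rank \(m+1-\ell\) in the complemented window. Hence \(c_N\) maps \(\mathcal K_{N,m}(k,\ell)\) bijectively onto \(\mathcal K_{N,m}(N+1-k,m+1-\ell)\). In particular, the condition \(\ell\ge j\) becomes \(m+1-\ell\le m+1-j\).

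We compose \(c_N\) with the bijection of \Cref{lem:plus-window-exchange} taken at parameters \(N\), \(k^\ast=N+1-k\), and \(j^\ast=m+1-j\). This sends \(\bigsqcup_{\ell\ge j}\mathcal K_{N,m}(k,\ell)\) onto
\[
\bigl(\{1,\ldots,N-k\}\times\mathcal K_{N-1,m}(N-k,j^\ast)\bigr)\sqcup\Bigl([m]\times\bigsqcup_{q^\ast=N+1-k}^{N-1}\mathcal K_{N-1,m}(q^\ast,j^\ast)\Bigr).
\]
We then apply \(c_{N-1}\) to the output words, which gives \(\mathcal K_{N-1,m}(N-k,j^\ast)\to\mathcal K_{N-1,m}(k,j)\) and \(\mathcal K_{N-1,m}(q^\ast,j^\ast)\to\mathcal K_{N-1,m}(N-q^\ast,j)\). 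As \(q^\ast\) runs from \(N+1-k\) to \(N-1\), the index \(q=N-q^\ast\) runs over \(1,\ldots,k-1\). The position label \(b\in[m]\) is left unchanged.

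The only step that needs care is the non-record label \(a\). In the \(+\) exchange, \(a\in\{1,\ldots,k^\ast-1\}\) is the deleted buffer value of the complemented word. We want to return instead the original value \(c_N(a)=N+1-a\), which ranges over \(\{k+1,\ldots,N\}\), matching the first factor in the statement.

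To finish, we check that standardization commutes with complementation. Deleting the value \(a\) and standardizing, then complementing in \([N-1]\), agrees with complementing in \([N]\), deleting \(N+1-a\), and standardizing. This holds because \(\operatorname{std}\) is the increasing bijection and complementation reverses order on both sides. This identity is what makes the composite well defined on the stated sets.

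Since every map in the composite is a bijection, the result is an explicit bijection. Unwinding it gives the direct description: the non-record case is \(\rho=j\) with \(r_m>k\), and otherwise \(a\) is the \(j\)-th smallest element of the extended window and satisfies \(a<k\).
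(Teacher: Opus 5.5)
Your proposal is correct and is essentially the paper's proof: you conjugate the bijection of \Cref{lem:plus-window-exchange}, taken at threshold \(N+1-k\) and index \(m+1-j\), by the order reversals \(x\mapsto N+1-x\) on \([N]\) and \(x\mapsto N-x\) on \([N-1]\), reindex record thresholds by \(q=N-q^\ast\), and leave the window-position label \(b\) unchanged. Your explicit conversion \(a\mapsto N+1-a\) of the non-record value and your check that standardization commutes with complementation spell out what the paper leaves implicit. Strictly, that check is what yields the direct ``delete \(a\) and standardize'' description used in the later recursion; it is not needed for the composite to be a bijection.
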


\begin{proof}
Apply the order-reversing involution
\[
(k;r_1,\ldots,r_{N-1})
\longmapsto
(N+1-k;N+1-r_1,\ldots,N+1-r_{N-1}).
\]
It sends \(\mathcal K_{N,m}(k,\ell)\) to
\(\mathcal K_{N,m}(N+1-k,m+1-\ell)\).  Therefore \(\ell\ge j\) becomes
\(m+1-\ell\le m+1-j\), and
\Cref{lem:plus-window-exchange}, with threshold \(N+1-k\) and index
\(m+1-j\), applies.  On the size-\(N-1\) output word use the corresponding
order reversal \(x\mapsto N-x\).  Its non-record branch becomes
\[
\{k+1,\ldots,N\}\times\mathcal K_{N-1,m}(k,j),
\]
and its record thresholds \(q'=N+1-k,\ldots,N-1\) become
\(q=N-q'=1,\ldots,k-1\).  Positions in the extended window are unchanged,
so the label in \([m]\) is unchanged.  Conjugating the forward and inverse
maps of \Cref{lem:plus-window-exchange} by these order reversals proves the
stated bijection and its reversibility.
\end{proof}

When \(N=m\), the buffer entry \(r_m\) is absent; the diagonal case is handled
by the following hole construction.

\begin{lemma}[The diagonal hole bijection]
\label[lemma]{lem:diagonal-hole-bijection}
Let \(m\ge1\), let \(w\in\{+,-\}^{m-1}\), and fix \(1\le k\le m\).  There is a
bijection
\[
\begin{aligned}
&\{(\alpha,\lambda):
  \alpha\text{ is a scan order of }[m]\setminus\{k\},\
  \lambda:\operatorname{Rec}_w(\alpha;k)\to[m]\}\\
\longleftrightarrow
&\{(g,\rho):g\text{ is counted by }C_w^{(m)}(k),\ \rho\in S_{m-1}\}.
\end{aligned}
\]
Here \(\operatorname{Rec}_w(\alpha;k)\) denotes the set of record steps in the
threshold scan with initial threshold \(k\) and comparison word \(w\).
\end{lemma}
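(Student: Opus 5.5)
We prove the statement by induction on \(m\), peeling off the first letter of \(w\), because both sides obey the same first-step recurrences. Write \(w=\pm v\) with \(v\in\{+,-\}^{m-2}\). By \Cref{cor:diagonal-case}, the cardinality of the left side is \(D_w^{(m)}(k)\), since \(\lambda\) contributes \(m^{N_w(\alpha;k)}\). The cardinality of the right side is \((m-1)!\,C_w^{(m)}(k)\). So the task is to realize \Cref{prop:rank-vector-recurrences} and the recurrence of \Cref{def:endpoint-counts} object by object, and to match them.

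\textbf{Decomposing the left side.} Take \(w=+v\) and let \(a=\alpha_1\) be the first scanned value.

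In the non-record case \(a<k\), we delete \(a\) and standardize. This leaves a scan of \([m-1]\setminus\{k-1\}\) with word \(v\) and threshold \(k-1\), together with the restriction of \(\lambda\). We record \(a\in[k-1]\).

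In the record case \(a>k\), we delete \(k\) and standardize. This gives threshold \(q=a-1\in\{k,\ldots,m-1\}\) and a labeling of the remaining records. We also record the label \(\lambda(\text{step }1)\in[m]\).

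Since the remaining scan has length \(m-2\) and the alphabet stays \([m]\), the smaller objects have \(N=m-1\) and \(m\) labels. They are therefore not in the diagonal regime. This is exactly the setting of \Cref{lem:plus-window-exchange}, with \(N=m\) on the left replaced by \(N=m-1<m\).

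\textbf{Plan to avoid the size mismatch.} Rather than inducting on the diagonal statement directly, I would prove a stronger, non-diagonal hole bijection. For every \(N\le m\), scans \((\alpha,\lambda)\) of \([N]\setminus\{k\}\) with word of length \(N-1\) and labels in \([m]\) should correspond to pairs \((g,\text{hole data})\). The hole data is an injection recording which of the \(m\) ranks are ``unused'' relative to the \(N\) values. This is consistent with \Cref{thm:finite-transfer-identity}: for \(r<m-1\) the relation between \(D\) and \(C\) goes the other way, via degree \emph{reduction}, which is not positive. So the strengthened statement should instead be phrased on the \(C\)-side. Pairs \((g,\rho)\) with \(g\) a weak endpoint path into \([m]\) are to be encoded by a permutation \(\rho\in S_{m-1}\), which I view as a linear order on the \(m-1\) ``holes'' \([m]\setminus\{k\}\).

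\textbf{The bijection itself.} Given \((g,\rho)\), read \(g_1\) against \(g_0=k\) under sign \(+\).

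If \(g_1=g_0\), we are in the non-record branch. The first entry of \(\rho\), viewed as a hole value \(a\), becomes the scanned value, with \(a<k\) forced by a sorting convention on \(\rho\).

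If \(g_1>g_0\), we are in the record branch. We set the scanned value to \(a=g_1\) (a hole, since \(g_1\ne k\)) and take \(\lambda=\) the position data. We then recurse with the path \((g_1,g_2,\ldots)\) and the hole order \(\rho\) with \(a\) removed and \(k\) reinserted.

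The cardinality check is
\[
(k-1)D_v(k-1)+m\sum_{j\ge k}D_v(j)=(m-1)!\Bigl(C_v(k)+\sum_{\ell>k}C_v(\ell)\Bigr),
\]
which is consistent with \Cref{cor:diagonal-case} at both levels. The branch multiplicities are \(k-1\) versus \(m\) on one side and \(m-1\) factorial redistributions on the other, so the remaining work is to redistribute them exactly. I would use \Cref{lem:plus-window-exchange} to redistribute factor \(m\) labels against weak-equality repetitions, and obtain the \(-\) case by the order reversal as in \Cref{lem:minus-window-exchange}.

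\textbf{Main obstacle.} The hardest step is matching multiplicities at each step. The equality branch \(g_1=g_0\) on the \(C\)-side must absorb part of the \(m\)-fold record labels, because \(k-1\ne(m-1)\cdot(\ldots)\) term by term. I would first try to define the hole order \(\rho\) as the window \((r_1,\ldots,r_{m-1})\) of \(\mathcal K_{m,m}(k,k)\), then apply the exchange of \Cref{lem:plus-window-exchange} with a virtual buffer, and verify that the composite is bijective by checking the inverse branchwise.
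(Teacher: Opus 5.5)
Your proposal does not construct the bijection, and the one branch rule it does state fails immediately. You send an equality step \(g_1=g_0\) to the non-record branch and a strict step to the record branch. Take \(m=2\), \(w=+\), \(k=1\). The left set consists of the two labelings \(\lambda(1)\in\{1,2\}\) of the single record of \(\alpha=(2)\). The right set consists of \(g=(1,1)\) and \(g=(1,2)\) with the trivial \(\rho\). The path \((1,1)\) would have to come from a non-record with scanned value \(a<1\), which does not exist, so the equality branch must absorb record objects. In general the branch sizes \((k-1)D_v^{(m)}(k-1)\) and \((m-1)!\,C_v^{(m)}(k)\) differ, and you give no rule for redistributing them.

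Your displayed count is only the top-level identity \(D_{+v}^{(m)}(k)=(m-1)!\,C_{+v}^{(m)}(k)\). It does not hold termwise one level down, because \(v\) has length \(m-2\) and \Cref{cor:diagonal-case} does not apply there. The ``stronger non-diagonal hole bijection'' your induction would need is never formulated. The fallback tool is also unavailable: \Cref{lem:plus-window-exchange} requires \(N>m\) so that the buffer entry \(r_m\) exists. For \(N=m\) its targets \(\mathcal K_{m-1,m}(q,j)\) are not even defined, since an \(m\)-window does not fit in a word of length \(m-1\). There is no virtual buffer, which is exactly why the diagonal case needs its own construction. Counting alone does give bare existence of some bijection, since both sides have \(D_w^{(m)}(k)=(m-1)!\,C_w^{(m)}(k)\) elements by \Cref{cor:diagonal-case}. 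But the lemma is the base case of the explicit map \(\Theta_{N,m,w,k}\), so an explicit construction is what is needed.

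The missing idea is to drop recursion on \(m\) and work on a board of fixed length \(m\). Start from \((A_1,\ldots,A_m)\) with threshold \(A_k\). At a non-record step \(t\), replace the scanned token in place by a time-stamped hole \(H_t\). At a record step, delete the old threshold, insert \(H_t\) at position \(\lambda(t)\in[m]\), and make the scanned token the threshold. Let \(g_t\) be the board position of the threshold.

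The non-hole tokens stay in increasing order. So at a \(+\)-step a non-record lies left of the threshold and a record comes from its right, giving \(g_t\ge g_{t-1}\); symmetrically \(g_t\le g_{t-1}\) at \(-\)-steps. A record can still give \(g_t=g_{t-1}\) (new threshold adjacent to the old one, hole inserted to its right). This is how the \(m\) labels are traded against hole orders. The final left-to-right order of the \(m-1\) hole time-stamps is \(\rho\).

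The inverse runs backwards. Let \(p\) be the position of \(H_t\) and \(q=g_t\). Step \(t\) was a non-record exactly when \(g_t=g_{t-1}\) and either \(p<q\) (for \(w_t=+\)) or \(p>q\) (for \(w_t=-\)). Otherwise it was a record with label \(p\), and one restores an actual token at position \(g_{t-1}\) as the previous threshold. Assigning the values \(1,\ldots,m\) left to right to the reconstructed tokens recovers \(\alpha\).
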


\begin{proof}
We give the map from labeled scan orders to pairs \((g,\rho)\).  Maintain a
board with \(m\) positions.  Each position contains either an actual value
\(A_x\), \(x\in[m]\), or a hole \(H_t\), where \(t\) records the time at which
the hole was created.  Initially the board is $(A_1,A_2,\ldots,A_m)$, and the threshold is \(A_k\). Thus \(g_0=k\).

Process the scan order $\alpha=(a_1,\ldots,a_{m-1})$ from left to right.  Suppose we are at time \(t\).  If the \(t\)-th inspected
value is not a record, replace the board entry \(A_{a_t}\) by the hole
\(H_t\), and keep the threshold unchanged.  If it is a record, let
\(b=\lambda(t)\).  Delete the old threshold from the board, insert the hole
\(H_t\) in the \(b\)-th position, and make \(A_{a_t}\) the new threshold.
After either operation, define \(g_t\) to be the current position of the
threshold.

After time \(t\), the following board invariant holds:
\begin{enumerate}[label=(\roman*)]
\item the holes are exactly \(H_1,\ldots,H_t\);
\item the actual tokens are the current threshold together with the
uninspected values \(A_{a_{t+1}},\ldots,A_{a_{m-1}}\), and they occur in
increasing value order when the holes are ignored;
\item the current threshold occupies position \(g_t\).
\end{enumerate}
This follows by induction: a non-record replaces its inspected token by a
hole, while a record deletes the old threshold and retains the inspected token
as the new threshold; neither operation changes the relative order of the
remaining actual tokens.  Consequently, at a \(+\)-step a non-record lies to
the left of the threshold and leaves its position unchanged, whereas a record
selects an actual token to its right, so the threshold position weakly
increases.  Thus
\[
w_t=+\Longrightarrow g_t\ge g_{t-1}.
\]
The same argument with left and right interchanged gives
\[
w_t=-\Longrightarrow g_t\le g_{t-1}.
\]
After \(m-1\) steps, only the final threshold remains actual; the other
entries are the holes \(H_1,\ldots,H_{m-1}\).  Reading their time labels from
left to right gives a permutation \(\rho\in S_{m-1}\).

For the inverse map, start from \((g,\rho)\) and construct the final
board by putting one unnamed actual token in position \(g_{m-1}\), and by
putting the holes
\[
H_{\rho_1},H_{\rho_2},\ldots,H_{\rho_{m-1}}
\]
from left to right in the other positions.  This actual token is the current
threshold.

For \(t=m-1,m-2,\ldots,1\), let \(q=g_t\) be the current threshold position
and let \(p\) be the current position of \(H_t\).  If
\[
w_t=+,\qquad g_t=g_{t-1},\qquad p<q,
\]
or if
\[
w_t=-,\qquad g_t=g_{t-1},\qquad p>q,
\]
then the original step was a non-record: replace \(H_t\) by a new actual
token, and declare this token to be the \(t\)-th scanned token.  Otherwise the
original step was a record: the current threshold token is the \(t\)-th scanned
token, the label is $\lambda(t)=p$, then delete \(H_t\), insert a new actual token in position \(g_{t-1}\), and
make this new token the threshold.

After reversing all steps, the board contains \(m\) actual tokens and no holes.
Assign the values \(1,2,\ldots,m\) to these tokens from left to right.  Since
the threshold is in position \(g_0=k\), the initial threshold receives value
\(k\).  The values assigned to the scanned tokens, in times
\(1,\ldots,m-1\), form \(\alpha\), and the labels recovered in the record
steps form \(\lambda\).

To prove reversibility, use descending induction on \(t\).  After the steps
\(m-1,m-2,\ldots,t+1\) have been undone, the board is, up to the still unnamed
actual tokens, exactly the forward board just after time \(t\), with the same
threshold and holes.  For a \(+\)-step, a forward non-record creates \(H_t\)
strictly to the left of the unchanged threshold, so
\(g_t=g_{t-1}\) and \(p<q\).  In a forward record the new threshold comes from
the right of the old one.  If deletion of the old threshold and insertion of
\(H_t\) leave the threshold in position \(g_{t-1}\), then the new threshold
was the next actual token to the right and the inserted hole lies to its right;
thus \(p<q\) is impossible.  Hence the inverse criterion detects exactly the
non-record \(+\)-steps.  The argument for a \(-\)-step is obtained by
interchanging left and right.  In either branch the stated reverse operation
is the unique inverse of the forward operation, so the induction invariant is
restored at time \(t-1\).  After all steps are undone, assigning values in
left-to-right order recovers the comparisons, the scan order, and every record
label.  The constructions are mutually inverse.
\end{proof}

\begin{example}[The hole construction in action]
\label[example]{ex:hole-bijection}
Take \(m=4\), \(w=(+,+,-)\), initial threshold \(k=1\), scan order
\(\alpha=(4,3,2)\), and labels \(\lambda(1)=2\) and \(\lambda(3)=1\).  The board
starts as \([A_1,A_2,A_3,A_4]\), with the threshold \(A_1\) in position
\(g_0=1\).
\begin{itemize}
\item \(t=1\) (\(+\)): \(a_1=4>1\) is a record.  Delete the threshold \(A_1\) and
  insert the hole \(H_1\) in position \(\lambda(1)=2\); the new threshold is
  \(A_4\).  The board is \([A_2,H_1,A_3,A_4]\) and \(g_1=4\).
\item \(t=2\) (\(+\)): \(a_2=3<4\) is a non-record.  Replace \(A_3\) by \(H_2\)
  in place, leaving the threshold \(A_4\) unchanged.  The board is
  \([A_2,H_1,H_2,A_4]\) and \(g_2=4\).
\item \(t=3\) (\(-\)): \(a_3=2<4\) is a record.  Delete the threshold \(A_4\) and
  insert the hole \(H_3\) in position \(\lambda(3)=1\); the new threshold is
  \(A_2\).  The board is \([H_3,A_2,H_1,H_2]\) and \(g_3=2\).
\end{itemize}
Reading the hole time-labels from left to right gives \(\rho=(3,1,2)\), and the
endpoint sequence is \(g=(1,4,4,2)\), weakly increasing across the two
\(+\)-steps and weakly decreasing across the \(-\)-step, as required of a path
counted by \(C_w^{(4)}(1)\).  Thus \((\alpha,\lambda)\mapsto(g,\rho)\); the
inverse rebuilds the board from \(g\) and \(\rho\), reversing the three steps to
recover \((\alpha,\lambda)\).
\end{example}

We can now assemble the finite-transfer bijection.  For \(N\ge m\), \(w\in
\{+,-\}^{N-1}\), and \(1\le k\le N\), let \(\mathcal R_{N,m}(w;k)\) denote
the set of pairs \((\alpha,\lambda)\), where \(\alpha\) is a scan order of
\([N]\setminus\{k\}\) and
\[
\lambda:\operatorname{Rec}_w(\alpha;k)\to[m].
\]
Let \(\mathcal T_{N,m}(w;k)\) be the disjoint union
\[
\mathcal T_{N,m}(w;k)
=
\bigsqcup_{\ell=1}^{m}
\{g:g\text{ is counted by }C_w^{(m)}(\ell)\}
\times
\mathcal K_{N,m}(k,\ell).
\]

\begin{proposition}[Finite-transfer bijection]
\label[proposition]{prop:finite-transfer-bijection}
For \(N\ge m\), \(w\in\{+,-\}^{N-1}\), and \(1\le k\le N\), there is a
bijection
\[
\Theta_{N,m,w,k}:\mathcal T_{N,m}(w;k)
\longrightarrow
\mathcal R_{N,m}(w;k).
\]
\end{proposition}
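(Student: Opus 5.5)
We construct \(\Theta_{N,m,w,k}\) by induction on \(N\ge m\), peeling off the first letter of \(w\) and using the window exchanges of \Cref{lem:plus-window-exchange} and \Cref{lem:minus-window-exchange}. The base case \(N=m\) is the hole construction. The identity \(\kappa_{m-1}^{(m)}(k,\ell)=(m-1)!\,\delta_{k,\ell}\) shows that \(\mathcal K_{m,m}(k,\ell)\) is empty unless \(\ell=k\). In that case it consists of all orderings of \([m]\setminus\{k\}\) after \(k\), which we identify with \(S_{m-1}\). Hence \(\mathcal T_{m,m}(w;k)\) is in bijection with pairs \((g,\rho)\), where \(g\) is counted by \(C_w^{(m)}(k)\) and \(\rho\in S_{m-1}\). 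Composing with the inverse of \Cref{lem:diagonal-hole-bijection} gives \(\Theta_{m,m,w,k}\).

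For the inductive step, let \(N>m\) and \(w=+v\); the \(-\)-case is identical with \Cref{lem:minus-window-exchange} in place of the \(+\)-exchange. Decompose a path \(g=(g_0,g_1,\ldots,g_{N-1})\) counted by \(C_{+v}^{(m)}(\ell)\) as \(g_0=\ell\) together with the tail \(g'=(g_1,\ldots)\), which is counted by \(C_v^{(m)}(j)\) with \(j=g_1\ge\ell\). Reorganizing the disjoint union by \(j\), we get
\[
\mathcal T_{N,m}(+v;k)\cong\bigsqcup_{j=1}^m\{g':C_v^{(m)}(j)\}\times\bigsqcup_{\ell\le j}\mathcal K_{N,m}(k,\ell),
\]
since the head \(\ell\) of \(g\) is recorded by which \(\mathcal K_{N,m}(k,\ell)\) the window word lies in.

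Now apply \Cref{lem:plus-window-exchange} to the inner union. There are two branches.
\begin{enumerate}[label=(\roman*)]
\item In the non-record branch we obtain \((a,\omega')\) with \(a<k\) and \(\omega'\in\mathcal K_{N-1,m}(k-1,j)\). The pair \((g',\omega')\) lies in \(\mathcal T_{N-1,m}(v;k-1)\), and induction sends it to a labeled scan order \((\alpha',\lambda')\) of \([N-1]\setminus\{k-1\}\). Unstandardize by inserting the value \(a\), prepend \(a\) as the first scanned entry, and shift the record labels by one time step. Because \(a<k\) at a \(+\)-step, this entry is a non-record, so we obtain an element of \(\mathcal R_{N,m}(+v;k)\) whose first step is not a record.
\item In the record branch we obtain \((b,\omega')\) with \(b\in[m]\) and \(\omega'\in\mathcal K_{N-1,m}(q,j)\), where \(k\le q\le N-1\). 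Induction sends \((g',\omega')\) to \((\alpha',\lambda')\) with initial threshold \(q\). Unstandardize by reinserting \(k\), so that \(q\) becomes \(a=q+1>k\). Prepend \(a\) as the first scanned entry, which is a record, and set \(\lambda(1)=b\).
\end{enumerate}

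The two branches together cover \(\mathcal R_{N,m}(+v;k)\) exactly once. Indeed, the first scanned value \(a\) either lies below \(k\), giving a non-record, or lies above \(k\), giving a record carrying a label in \([m]\). This matches the first-step decomposition in \Cref{prop:rank-vector-recurrences}, and all later records and labels transfer unchanged under standardization.

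The inverse map reads off the first scanned entry, decides the branch, applies \(\Theta^{-1}\) inductively, and then applies the inverse exchange of \Cref{lem:plus-window-exchange}.

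The main obstacle is bookkeeping: we must check that standardization in the exchange lemma (deleting \(a\), or deleting the old threshold \(k\)) is the same standardization that turns a labeled scan of \([N]\setminus\{k\}\) into a labeled scan of \([N-1]\setminus\{\cdot\}\). In particular, the scan threshold after step one must equal the window threshold \(k-1\) or \(q\), respectively, and records and the labels \(\lambda\) on later steps must be preserved. We would verify this directly from \Cref{def:rank-vector-D}, since the record test only compares values relative to the threshold and is therefore invariant under order-preserving relabeling. As a sanity check, the cardinalities on both sides agree by \Cref{thm:finite-transfer-identity} with \(r=N-1\), so any local mismatch in the construction would show up as a count failure.
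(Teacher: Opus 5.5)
Your proposal is correct and follows the paper's proof essentially step for step. The base case standardizes the rest word of \(\omega\in\mathcal K_{m,m}(k,k)\) to \(\rho\in S_{m-1}\) and applies the inverse hole bijection; the inductive step peels off the first letter, applies the \(\pm\)-window exchange with \(j=g_1\), recurses on \((g',\omega')\), and unstandardizes and prepends the first scanned value (labelled \(b\) in the record branch). Your explicit regrouping of \(\mathcal T_{N,m}(+v;k)\) by \(j=g_1\) only front-loads the check the paper makes at the end of its inverse, namely that \(\ell\le j\) (resp.\ \(\ell\ge j\)) ensures \(g=(\ell,g')\) is counted by \(C_{sv}^{(m)}(\ell)\).
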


\begin{proof}
We define \(\Theta_{N,m,w,k}\) recursively on \(N\).  If \(N=m\), then
\(\mathcal K_{m,m}(k,\ell)\) is empty unless \(\ell=k\).  For $\omega=(k;r_1,\ldots,r_{m-1})\in\mathcal K_{m,m}(k,k)$, standardize the rest word by deleting \(k\):
\[
\rho=
\bigl(\operatorname{std}_{[m]\setminus\{k\}}(r_1),
\ldots,
\operatorname{std}_{[m]\setminus\{k\}}(r_{m-1})\bigr)
\in S_{m-1}.
\]
Then apply the inverse direction of \Cref{lem:diagonal-hole-bijection} to
\((g,\rho)\).

Assume now that \(N>m\), and write \(w=sv\), where \(s\in\{+,-\}\).  Take an
input \((g,\omega)\in\mathcal T_{N,m}(w;k)\), with $g=(g_0,g_1,\ldots,g_{N-1}), g_0=\ell$. Put \(j=g_1\) and \(g'=(g_1,\ldots,g_{N-1})\).  In every recursive call below,
the labeled scan order returned by the recursive bijection is a tail scan of
length \(N-2\).  When this tail is attached after the first scanned value, its
scan times are reindexed by \(u\mapsto u+1\), and the labels on its record
steps are transported by this reindexing.

If \(s=+\), then
\(\ell\le j\), so \Cref{lem:plus-window-exchange} applies to \(\omega\).
There are two cases.

In the non-record case it returns $a<k, \omega'\in\mathcal K_{N-1,m}(k-1,j)$. Recursively apply \(\Theta_{N-1,m,v,k-1}\) to \((g',\omega')\), obtaining a labeled scan order on the standardized set. Unstandardize the tail by
reinserting \(a\), and put \(a\) as the first scanned value.  No new label is
added.

In the record case it returns
\[
b\in[m],\qquad
\omega'\in\mathcal K_{N-1,m}(q,j),
\qquad k\le q\le N-1.
\]
The first scanned value is the unstandardized value \(a=q+1\).  Recursively
apply \(\Theta_{N-1,m,v,q}\) to \((g',\omega')\), unstandardize the tail by
reinserting the old threshold \(k\), put \(a\) first, and give this first
record the label \(b\).

The construction for \(s=-\) is identical, using
\Cref{lem:minus-window-exchange}.  In the non-record case one has $a>k, \omega'\in\mathcal K_{N-1,m}(k,j)$. Recursively apply \(\Theta_{N-1,m,v,k}\) to \((g',\omega')\), unstandardize the
tail by reinserting \(a\), and put \(a\) first, with no new label.  In the
record case one has
\[
b\in[m],\qquad
\omega'\in\mathcal K_{N-1,m}(q,j),
\qquad 1\le q\le k-1.
\]
The first scanned value is the unstandardized value \(a=q<k\).  Recursively
apply \(\Theta_{N-1,m,v,q}\) to \((g',\omega')\), unstandardize the tail by
reinserting the old threshold \(k\), put \(a\) first, and give this first
record the label \(b\).

The inverse recursion is obtained by reversing these steps.  If \(N=m\), apply
the forward direction of \Cref{lem:diagonal-hole-bijection} to the labeled
scan order, obtaining \((g,\rho)\), and then unstandardize \(\rho\) by
reinserting \(k\) to recover $\omega=(k;r_1,\ldots,r_{m-1})$.
If \(N>m\), read the first scanned value \(a\).  Remove this first scanned
value from the scan order, and reindex every remaining scan time \(i\) as
\(i-1\); the labels on record steps in the tail are transported by the same
reindexing.  For a \(+\)-step, \(a<k\) is the non-record case and \(a>k\) is
the record case; for a \(-\)-step, \(a>k\) is the non-record case and \(a<k\)
is the record case.

In a non-record case there is no label at the first scan time.  Standardize the
tail by deleting \(a\); its initial threshold is \(k-1\) for a \(+\)-step and
\(k\) for a \(-\)-step.  Apply the inverse recursion to recover \(g'\) and
\(\omega'\), and then use the inverse non-record branch of the appropriate
local exchange.  In a record case the first scan time has a unique label
\(b\).  Remove it, standardize the tail by deleting the old threshold \(k\),
and use initial threshold \(q=a-1\) for a \(+\)-step and \(q=a\) for a
\(-\)-step.  The inverse recursion followed by the inverse record branch of
\Cref{lem:plus-window-exchange} or \Cref{lem:minus-window-exchange} recovers
\(\omega\).

In both cases let \(\ell\) be the rank of \(k\) in the reconstructed first
\(m\)-window and set \(g=(\ell,g')\).  The inverse local exchange places its
input in a summand with \(\ell\le j=g'_0\) when \(s=+\), and with
\(\ell\ge j\) when \(s=-\).  Since, inductively, \(g'\) is counted by
\(C_v^{(m)}(j)\), these inequalities show that \(g\) is counted by
\(C_{sv}^{(m)}(\ell)\).  Thus the reconstructed pair lies in
\(\mathcal T_{N,m}(w;k)\), and every parameter used by the forward recursion
is recovered, so the inverse recursion proves bijectivity.
\end{proof}

\begin{theorem}[An explicit bijection for \(m\le n\)]
\label{thm:direct-bijection-m-le-n}
Let \(1\le m\le n\).  For every
\(\eps\in\{+,-\}^{n-1}\), there is an explicit bijection between pairs
\[
(\sigma,f),\qquad
\sigma\in S_n,\quad f:P_\eps\to[m]\text{ order-preserving},
\]
and pairs
\[
(\pi,\lambda),\qquad
\pi\in S_n,\quad
\lambda:\operatorname{Rec}_\eps(\pi)\to[m].
\]
\end{theorem}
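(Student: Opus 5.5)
We assemble the bijection from \Cref{prop:finite-transfer-bijection} with \(N=n\), together with a decomposition of both sides by terminal value. Put \(w=\eps_{n-1}\cdots\eps_1\). On the right-hand side, a pair \((\pi,\lambda)\) is determined by three pieces of data. The first is the terminal value \(k=\pi_n\). The second is the scan order \(\alpha=(\pi_{n-1},\ldots,\pi_1)\) of \([n]\setminus\{k\}\). The third is the label \(\lambda(n)\in[m]\) of the terminal record, together with the restriction of \(\lambda\) to the remaining records. By \Cref{prop:chain-threshold-equivalence} and the proof of \Cref{lem:finite-entry}, these remaining records correspond exactly to \(\operatorname{Rec}_w(\alpha;k)\). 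Hence the right-hand side is in explicit bijection with
\[
\bigsqcup_{k=1}^n [m]\times\mathcal R_{n,m}(w;k).
\]

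On the left-hand side, we first use \Cref{lem:endpoint-order} to encode \(f\) as its endpoint path \(g\), with \(g_j=f(x_{n-j})\); this path is counted by \(C_w^{(m)}(\ell)\), where \(\ell=f(x_n)\). Then \(\bigsqcup_k\mathcal R_{n,m}(w;k)\cong\bigsqcup_k\mathcal T_{n,m}(w;k)\) via \(\Theta\). Matching the left-hand side therefore reduces to an explicit bijection
\[
S_n\times\{g\}_{\ell}\;\longleftrightarrow\;[m]\times\bigsqcup_{k=1}^n\mathcal K_{n,m}(k,\ell)
\]
for each fixed \(\ell\). Counting shows this is plausible: by the column-sum identity, \(\sum_k|\mathcal K_{n,m}(k,\ell)|=n!/m\), so the right side has size \(n!\).

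The hard step is making this last bijection explicit. The plan is to realize it through the window model. A word \(\omega=(k;r_1,\ldots,r_{n-1})\) in \(\bigsqcup_k\mathcal K_{n,m}(k,\ell)\) is simply a permutation of \([n]\) whose first entry has rank \(\ell\) within its first \(m\) entries. So we need an explicit bijection \(S_n\cong [m]\times\{\tau\in S_n:\tau_1\text{ has rank }\ell\text{ in }(\tau_1,\ldots,\tau_m)\}\). For this we take \(\sigma\in S_n\) and look at its first \(m\)-window \((\sigma_1,\ldots,\sigma_m)\). Let \(b\) be the position of the window element of rank \(\ell\). We then move that element to the front, swapping it with \(\sigma_1\); this is a transposition within the window, so the window's underlying set is preserved. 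The output is \((b,\tau)\). The inverse reads \(b\) and swaps \(\tau_1\) back into position \(b\). Swapping in a fixed position is canonical, so the map is well-defined and reversible; we expect this to need only a short check.

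Composing all the pieces gives the map
\[
(\sigma,f)\mapsto(g,\ell)\mapsto(b,g,\omega)\mapsto(b,\Theta(g,\omega))\mapsto(\pi,\lambda),
\]
where \(k\) is read off from \(\omega\) and \(b\) becomes \(\lambda(n)\). Every step is explicit and invertible, which proves the theorem. The hypothesis \(m\le n\) enters only through \Cref{prop:finite-transfer-bijection} and the existence of the \(m\)-window, both of which require \(N=n\ge m\). The main obstacle is bookkeeping: we must make sure the terminal label and the window position \(b\) are matched consistently, and that \(\Theta\)'s recursion indexes records compatibly with \(\operatorname{Rec}_\eps(\pi)\) after reversing positions. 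We would verify this on the all-\(+\) case with \(m=n=2\).
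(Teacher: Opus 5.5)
Your proposal is correct and follows the paper's proof. You encode $f$ by its endpoint path $g$ with $\ell=g_0$, and turn $\sigma$ into a window position $b\in[m]$ (which becomes $\lambda(n)$) together with a word in $\bigsqcup_k\mathcal K_{n,m}(k,\ell)$. You then apply $\Theta_{n,m,w,k}$ and translate scan time $i$ to position $n-i$.

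The only difference is cosmetic. The paper moves the rank-$\ell$ window entry to the front while preserving the relative order of the other entries, whereas you transpose it with $\sigma_1$. Both operations preserve the first-window set and are inverted once $b$ is known, so either works.

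One slip in your reduction: the displayed target should also carry the factor of paths counted by $C_w^{(m)}(\ell)$. What you actually need is $S_n\leftrightarrow[m]\times\bigsqcup_k\mathcal K_{n,m}(k,\ell)$ for each fixed $g$, as you state correctly a few lines later.
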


\begin{proof}
Put $w=\eps_{n-1}\eps_{n-2}\cdots\eps_1$. Given \((\sigma,f)\), set $g_i=f(x_{n-i}), 0\le i\le n-1$. Then \(g\) is counted by \(C_w^{(m)}(g_0)\).  Let \(\ell=g_0\).  In the first
\(m\) entries of \(\sigma\), let \(k\) be the \(\ell\)-th smallest entry, and
let \(a_0\in[m]\) be its position.  Move \(k\) to the first position of
\(\sigma\), preserving the relative order of all other entries.  Because \(k\)
starts among the first \(m\) entries, this operation preserves the underlying
set of the first \(m\) entries; in the resulting word \(k\) still has rank
\(\ell\) in that set.  Thus it gives $\omega\in\mathcal K_{n,m}(k,\ell)$. Apply $\Theta_{n,m,w,k}(g,\omega)$ to obtain a scan order $\alpha=(\alpha_1,\ldots,\alpha_{n-1})$
of \([n]\setminus\{k\}\), with labels on its non-terminal records.  Define
\[
\pi_n=k,\qquad
\pi_{n-i}=\alpha_i\quad1\le i\le n-1,
\]
and give the terminal record \(n\) the label $\lambda(n)=a_0$.
The remaining labels are those produced by \(\Theta_{n,m,w,k}\), translated
from scan time \(i\) to position \(n-i\).  This produces a labeled greedy
\(\eps\)-record object \((\pi,\lambda)\).

The inverse starts with \((\pi,\lambda)\).  Put \(k=\pi_n\) and
\(a_0=\lambda(n)\), and form the scan order $\alpha=(\pi_{n-1},\pi_{n-2},\ldots,\pi_1)$. Define a labeling \(\lambda_{\mathrm{tail}}\) of the non-terminal record steps
of this scan by
\[
\lambda_{\mathrm{tail}}(i)=\lambda(n-i)
\qquad
i\in \operatorname{Rec}_w(\alpha;k).
\]
Apply \(\Theta_{n,m,w,k}^{-1}\) to
\((\alpha,\lambda_{\mathrm{tail}})\).  This recovers \(g\) and $\omega=(k;r_1,\ldots,r_{n-1})\in\mathcal K_{n,m}(k,g_0)$.

Finally, insert the leading \(k\) back into the \(a_0\)-th position of the
first \(m\)-window:
\[
\sigma=(r_1,\ldots,r_{a_0-1},k,r_{a_0},r_{a_0+1},\ldots,r_{n-1}).
\]
Since \(a_0\le m\), this is exactly the inverse of moving \(k\) to the front:
it restores the first-window set and the relative order of every other entry.
Set
\[
f(x_{n-i})=g_i,\qquad 0\le i\le n-1.
\]
The inequalities defining \(C_w^{(m)}\) are exactly the order-preserving
conditions for \(P_\eps\), so this recovers a unique pair \((\sigma,f)\).
All steps are inverse to the forward construction.
\end{proof}

To pass from \(m\le n\) to arbitrary \(m\), observe that every map
\(f:P_\eps\to[m]\), and every record labeling
\(\lambda:\operatorname{Rec}_\eps(\pi)\to[m]\), uses at most \(n\) labels.

For a nonempty subset \(T\subseteq[m]\), let \(\mathcal A_T\) be the set of
pairs \((\sigma,f)\) with \(f:P_\eps\to T\) order-preserving, where \(T\) has
the inherited order.  Let \(\mathcal B_T\) be the set of pairs
\((\pi,\lambda)\) with
\[
\lambda:\operatorname{Rec}_\eps(\pi)\to T.
\]
If \(1\le |T|=t\le n\), let \(c_T:[t]\to T\) be the increasing bijection.  The
bijection of \Cref{thm:direct-bijection-m-le-n}, transported by \(c_T\), gives
a bijection
\[
\Phi_T:\mathcal A_T\longrightarrow \mathcal B_T.
\]
We shall use \(\Phi_T\) only for subsets \(T\) of size at most \(n\).

\begin{corollary}[Arbitrary alphabets]
\label{thm:direct-bijection-all-m}
For every \(m,n\ge1\) and every \(\eps\in\{+,-\}^{n-1}\), there is an
algorithmic bijection between pairs
\[
(\sigma,f),\qquad
\sigma\in S_n,\quad f:P_\eps\to[m]\text{ order-preserving},
\]
and pairs
\[
(\pi,\lambda),\qquad
\pi\in S_n,\quad
\lambda:\operatorname{Rec}_\eps(\pi)\to[m].
\]
\end{corollary}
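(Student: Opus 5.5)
The plan is to decompose both sides according to the set of labels actually used, and then run a sieve (Garsia–Milne involution principle style) over the subsets \(T\subseteq[m]\), using the bijections \(\Phi_T\) only for \(|T|\le n\).

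For a pair \((\sigma,f)\) let \(U(f)=f(P_\eps)\subseteq[m]\) be its image; for \((\pi,\lambda)\) let \(U(\lambda)=\lambda(\operatorname{Rec}_\eps(\pi))\). Both have size at most \(n\). Let \(\mathcal A_T^{\circ}\subseteq\mathcal A_T\) and \(\mathcal B_T^{\circ}\subseteq\mathcal B_T\) be the pairs whose used set is exactly \(T\). Then
\[
\{(\sigma,f)\}=\bigsqcup_{T\subseteq[m],\,1\le|T|\le n}\mathcal A_T^{\circ},
\qquad
\{(\pi,\lambda)\}=\bigsqcup_{T\subseteq[m],\,1\le|T|\le n}\mathcal B_T^{\circ}.
\]
It therefore suffices to produce bijections \(\Psi_T:\mathcal A_T^{\circ}\to\mathcal B_T^{\circ}\) for each such \(T\).

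I construct \(\Psi_T\) by induction on \(|T|\). We have \(\mathcal A_T=\bigsqcup_{S\subseteq T}\mathcal A_S^{\circ}\), and similarly for \(\mathcal B_T\). By induction, bijections \(\Psi_S\) exist for all proper nonempty \(S\subsetneq T\); their disjoint union is a bijection
\[
\Psi_{<T}:\mathcal A_T\setminus\mathcal A_T^{\circ}\to\mathcal B_T\setminus\mathcal B_T^{\circ}.
\]
Together with \(\Phi_T:\mathcal A_T\to\mathcal B_T\), which exists since \(|T|\le n\), this is exactly the setup of the Garsia–Milne involution principle. Define \(\Psi_T(x)\) for \(x\in\mathcal A_T^{\circ}\) by iterating: apply \(\Phi_T\); if the result lies in \(\mathcal B_T^{\circ}\), stop. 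Otherwise apply \(\Psi_{<T}^{-1}\), then \(\Phi_T\) again, and so on. Finiteness of \(\mathcal A_T\) and injectivity of both maps guarantee termination without cycling. The standard argument then shows the result is a bijection onto \(\mathcal B_T^{\circ}\); the inverse is given by the same iteration with \(\Phi_T^{-1}\) and \(\Psi_{<T}\). The base case \(|T|=1\) is trivial, since then \(\mathcal A_T^{\circ}=\mathcal A_T\) and \(\mathcal B_T^{\circ}=\mathcal B_T\), so \(\Psi_T=\Phi_T\).

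The main point to check is the termination/bijectivity of the Garsia–Milne chase. I would present it concisely. Consider the chain \(x,\Phi_T x,\Psi_{<T}^{-1}\Phi_T x,\dots\), which alternates between \(\mathcal A_T\) and \(\mathcal B_T\). Every element after the first in \(\mathcal A_T\) lies in \(\mathcal A_T\setminus\mathcal A_T^{\circ}\), and both maps are injective. The chain cannot revisit an element, because the first repeated element would have two distinct preimages, or would be \(x\in\mathcal A_T^{\circ}\) having a \(\Psi_{<T}^{-1}\)-preimage outside the domain. Hence the chain terminates in \(\mathcal B_T^{\circ}\), and the reverse chase recovers \(x\).

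Finally, the global bijection is \(\bigsqcup_T\Psi_T\). It is algorithmic, since each step only evaluates the explicit maps of \Cref{thm:direct-bijection-m-le-n} transported by \(c_T\). When \(m\le n\) one could simply use \(\Phi_{[m]}\) directly, but the sieve version works uniformly for all \(m\).
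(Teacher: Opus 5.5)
Your proof is correct and follows the paper's strategy: stratify both sides by the exact set of used labels, use the transported bijections \(\Phi_T\) only for \(|T|\le n\), and bridge the remaining gap with the Garsia--Milne principle. The difference is only in packaging: you apply the bijective-subtraction form recursively in \(|T|\), assembling \(\Psi_{<T}\) from the previously built \(\Psi_S\). The paper instead applies the involution principle once per used-label set \(S\), to the signed unions \(\bigsqcup_{\varnothing\ne T\subseteq S}\{T\}\times\mathcal A_T\) with sign \((-1)^{|S|-|T|}\) and the involution toggling \(\min(S\setminus\operatorname{im})\) in \(T\), which gives a single flat chase rather than nested ones.
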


\begin{proof}
Fix the nonempty set \(S\subseteq[m]\) of labels used by an object.  Both
strata are empty when \(|S|>n\), so assume \(|S|\le n\), and write
\[
\mathcal A_S^{=}
=\{(\sigma,f)\in\mathcal A_S:\operatorname{im}(f)=S\},
\qquad
\mathcal B_S^{=}
=\{(\pi,\lambda)\in\mathcal B_S:\operatorname{im}(\lambda)=S\}.
\]
Apply the Garsia--Milne involution principle \cite{GarsiaMilne} to
\[
\widetilde{\mathcal A}_S
=\bigsqcup_{\varnothing\ne T\subseteq S}\{T\}\times\mathcal A_T,
\qquad
\widetilde{\mathcal B}_S
=\bigsqcup_{\varnothing\ne T\subseteq S}\{T\}\times\mathcal B_T,
\]
with sign \((-1)^{|S|-|T|}\) on the \(T\)-summand.  On either side, unless
all labels of \(S\) are used, toggle
\(c=\min(S\setminus\operatorname{im})\) in \(T\).  This is a sign-reversing
involution whose fixed points are, respectively,
\(\{S\}\times\mathcal A_S^{=}\) and
\(\{S\}\times\mathcal B_S^{=}\).  Since \(|T|\le n\), the maps \(\Phi_T\)
assemble into a sign-preserving bijection between the two signed sets.
The involution principle therefore gives an algorithmic bijection
\(\mathcal A_S^{=}\longrightarrow\mathcal B_S^{=}\).  Taking the disjoint
union over the unique used-label set \(S\) proves the result.
\end{proof}

\section{Directional and record-set refinements}
\label{sec:signed-strict-edge-transfer}

\subsection{Decorated directional transfer}

The continuous process separates \(+\)-records from \(-\)-records.  For
indeterminates \(p,q\) and a word \(w\in\{+,-\}^r\), define
\(K_w^{(p,q)}(x)\in\mathbb Q[p,q][x]\) by
\begin{align*}
K_\varnothing^{(p,q)}(x)&=1,\\
K_{+v}^{(p,q)}(x)
&=xK_v^{(p,q)}(x)+p\int_x^1K_v^{(p,q)}(u)\dd u,\\
K_{-v}^{(p,q)}(x)
&=(1-x)K_v^{(p,q)}(x)+q\int_0^xK_v^{(p,q)}(u)\dd u.
\end{align*}
The same conditioning argument as in the proof of
\Cref{thm:main-general} gives, for
\(w=\eps_{n-1}\eps_{n-2}\cdots\eps_1\),
\[
\sum_{\pi\in S_n}
p^{\rec_\eps^+(\pi)}q^{\rec_\eps^-(\pi)}
=
n!\int_0^1K_w^{(p,q)}(x)\dd x.
\]
Moreover \(K_w^{(m,m)}=H_w^{(m)}\).  At the finite level, the threshold
history alone forgets the value inspected at a non-record step.  We retain
that value as a decoration on the corresponding equality step.

Let \(w=w_1\cdots w_r\in\{+,-\}^r\), and let \(1\le k\le r+1\).  A
\emph{decorated endpoint path} of type \((w,k)\) is a pair \((g,\theta)\)
with
\[
g=(g_0,g_1,\ldots,g_r)\in[r+1]^{r+1},
\qquad g_0=k,
\]
such that
\[
w_i=+\Longrightarrow g_i\ge g_{i-1},
\qquad
w_i=-\Longrightarrow g_i\le g_{i-1}
\]
for \(1\le i\le r\), together with decorations on the equality steps.  More
precisely, set
\[
\begin{aligned}
S_+(g)&=\{i:w_i=+\text{ and }g_i>g_{i-1}\},\\
S_-(g)&=\{i:w_i=-\text{ and }g_i<g_{i-1}\},
\end{aligned}
\]
and
\[
E(g)=\{i:g_i=g_{i-1}\}.
\]
For every \(i\in E(g)\) we choose a value \(\theta_i\in[r+1]\), subject to
the following two conditions:
\[
w_i=+\Longrightarrow \theta_i<g_{i-1},
\qquad
w_i=-\Longrightarrow \theta_i>g_{i-1},
\]
and the list
\[
g_0,\qquad
g_i\ (i\in S_+(g)\cup S_-(g)),\qquad
\theta_i\ (i\in E(g))
\]
is a rearrangement of \([r+1]\).  Let \(\mathcal M_w(k)\) denote the set of
decorated endpoint paths of type \((w,k)\), and define
\[
R_w^{(p,q)}(k)
=
\sum_{(g,\theta)\in\mathcal M_w(k)}
p^{|S_+(g)|}q^{|S_-(g)|}.
\]
The scan-order bijection below gives
\[
R_w^{(m,m)}(k)=D_w^{(m)}(k),
\]
so this construction refines the rank-vector sums of
\Cref{def:rank-vector-D} by record direction.
The forgetful map \((g,\theta)\mapsto g\) preserves the strict \(+\)- and
strict \(-\)-steps, but its fibers need not be singletons.  For example, when
\(w=++\) and \(k=3\), the weak path \((3,3,3)\) has two admissible
decorations, corresponding to the two orders of the failed values \(1\) and
\(2\).  The decorations retain exactly this missing ordering data.

\begin{theorem}[Decorated signed transfer]
\label{thm:decorated-signed-transfer}
Let \(w\in\{+,-\}^r\).  Then
\[
r!\,K_w^{(p,q)}(x)
=
\sum_{k=1}^{r+1}R_w^{(p,q)}(k)B_{r,k}(x).
\]
Consequently, if \(n=r+1\) and
\(w=\eps_{n-1}\eps_{n-2}\cdots\eps_1\), then
\[
\sum_{\pi\in S_n}
p^{\rec_\eps^+(\pi)}q^{\rec_\eps^-(\pi)}
=
\sum_{k=1}^{n}R_w^{(p,q)}(k)
=
n!\int_0^1K_w^{(p,q)}(x)\dd x.
\]
In particular,
\[
n!\,\Omega(P_\eps;t)
=
t\sum_{k=1}^{n}R_w^{(t,t)}(k),
\]
where the factor \(t\) marks the terminal record.  Thus the
direction-refined record weights are matched, in this diagonal model, by
direction-marked strict edges on admissibly decorated order-preserving maps.
\end{theorem}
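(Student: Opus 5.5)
The plan is to copy the proof of \Cref{lem:rank-packaging}, replacing \(m\) by the direction weights \(p\) and \(q\), and then to deduce the consequences by integration and specialization. The key ingredient is a \((p,q)\)-refined version of the rank-vector recurrences of \Cref{prop:rank-vector-recurrences}:
\[
R_{+v}^{(p,q)}(k)=(k-1)R_v^{(p,q)}(k-1)+p\sum_{j=k}^{r}R_v^{(p,q)}(j),
\qquad
R_{-v}^{(p,q)}(k)=(r+1-k)R_v^{(p,q)}(k)+q\sum_{j=1}^{k-1}R_v^{(p,q)}(j).
\]
Here \(v\) has length \(r-1\), and \(R_v\) is indexed over \([r]\).

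To prove these recurrences combinatorially, I would decompose a decorated path \((g,\theta)\in\mathcal M_{+v}(k)\) according to its first step.

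If \(g_1=g_0=k\), the step is an equality step with decoration \(\theta_1<k\), giving \(k-1\) choices. Deleting the value \(\theta_1\) from \([r+1]\) and standardizing the remaining values turns the tail \((g_1,\ldots,g_r)\), with its decorations, into an element of \(\mathcal M_v(k-1)\). Standardization is increasing, so it preserves the strict and weak inequalities and the decoration conditions.

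If \(g_1>k\), the step is a strict \(+\)-step, contributing \(p\). The value \(g_0=k\) appears in the rearrangement list only as the initial entry. Deleting \(k\) and standardizing sends the tail to \(\mathcal M_v(j)\) with \(j=g_1-1\in\{k,\ldots,r\}\).

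Each operation is reversible by unstandardization. The \(-\)-case is the mirror image, with \(r+1-k\) decorations \(\theta_1>k\) and new threshold \(j=g_1<k\). This is exactly the scan-order decomposition in the proof of \Cref{prop:rank-vector-recurrences}: the decoration \(\theta_i\) is the failed inspected value, and \(g_i\) at a strict step is the new threshold. The same decomposition also yields the claimed identity \(R_w^{(m,m)}=D_w^{(m)}\), and more generally the refined statement
\[
R_w^{(p,q)}(k)=\sum_\alpha p^{N^+}q^{N^-}.
\]

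With the recurrences established, I would prove the Bernstein identity by induction on \(r\). The base case is trivial, since \(R_\varnothing(1)=1=B_{0,1}\). For the inductive step, write \(r!\,K_{+v}=r\cdot(r-1)!\bigl(xK_v+p\int_x^1K_v\bigr)\) and insert the induction hypothesis. The degree-raising identities \eqref{eq:degree-raising-plus} then give
\[
\sum_j R_v(j)\Bigl(jB_{r,j+1}+p\sum_{h\le j}B_{r,h}\Bigr).
\]
Collecting the coefficient of \(B_{r,k}\) produces exactly the refined \(+\)-recurrence. The \(-\)-case uses \eqref{eq:degree-raising-minus} in the same way. The algebra is verbatim that of \Cref{lem:rank-packaging}, because \(m\) entered only as a multiplier on the record integral in each direction separately.

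For the consequences, integrate over \([0,1]\) using \(\int_0^1B_{r,k}=1/(r+1)\):
\[
\sum_kR_w(k)=(r+1)!\int_0^1K_w=n!\int_0^1K_w .
\]
The conditioning identity stated before the theorem gives
\[
\sum_\pi p^{\rec^+_\eps(\pi)}q^{\rec^-_\eps(\pi)}=n!\int_0^1K_w ,
\]
which establishes the first displayed chain of equalities. For the second consequence, specialize \(p=q=t\). Since \(\rec_\eps=1+\rec^+_\eps+\rec^-_\eps\), we get
\[
t\sum_kR_w^{(t,t)}(k)=\sum_\pi t^{\rec_\eps(\pi)},
\]
and \Cref{thm:main-general} identifies this with \(n!\,\Omega(P_\eps;t)\).

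The main obstacle is bookkeeping in the first-step decomposition. One must check that after deleting and standardizing, the rearrangement condition on the list \(g_0,\,g_i\ (i\in S_\pm),\,\theta_i\ (i\in E)\) for the tail is exactly the condition for type \((v,g_1')\). In the strict case the old \(g_0\) drops out of the list and \(g_1\) becomes the new head. In the equality case \(\theta_1\) drops out while \(g_0=g_1\) stays as the head. One must also check that the decoration inequalities \(\theta_i<g_{i-1}\) and \(\theta_i>g_{i-1}\) survive standardization. I would settle both points by noting that standardization is an order isomorphism on the remaining values.
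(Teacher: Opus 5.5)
Your proposal is correct and follows essentially the same route as the paper: the same $(p,q)$-refined first-step recurrences, the same induction through the degree-raising identities of \Cref{lem:degree-raising}, and integration using $\int_0^1 B_{r,k}(x)\dd x=1/(r+1)$. The differences are minor. The paper first writes down the explicit bijection $\alpha\mapsto(g,\theta)$, storing each failed value as $\theta_i$, and derives from it both the recurrences and the identity $\sum_{\pi}p^{\rec_\eps^+(\pi)}q^{\rec_\eps^-(\pi)}=\sum_k R_w^{(p,q)}(k)$. You instead decompose decorated paths directly and use the conditioning identity stated before the theorem for the permutation side, which is equally valid.
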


\begin{proof}
We first give the finite bijection behind \(R_w^{(p,q)}(k)\).  Fix
\(k\in[r+1]\), and let $\alpha=(a_1,\ldots,a_r)$ be a permutation of \([r+1]\setminus\{k\}\).  Start with threshold
\(h_0=k\).  At step \(i\), declare a record if
\[
w_i=+\text{ and }a_i>h_{i-1},
\qquad\text{or}\qquad
w_i=-\text{ and }a_i<h_{i-1}.
\]
If a record occurs, set \(h_i=a_i\); otherwise set \(h_i=h_{i-1}\).

From \(\alpha\) construct \((g,\theta)\) by setting \(g_i=h_i\).  If step
\(i\) is not a record, then \(g_i=g_{i-1}\) and we set $\theta_i=a_i$. If step \(i\) is a record, no decoration is added.  A record in a \(+\)-step
is exactly a strict \(+\)-step \(g_i>g_{i-1}\), and a record in a \(-\)-step
is exactly a strict \(-\)-step \(g_i<g_{i-1}\).  A non-record in a \(+\)-step
has \(a_i<h_{i-1}=g_{i-1}\), and a non-record in a \(-\)-step has
\(a_i>h_{i-1}=g_{i-1}\).  Since the entries \(k,a_1,\ldots,a_r\) form a
permutation of \([r+1]\), the defining list for \((g,\theta)\) is also a
permutation of \([r+1]\).  Hence \((g,\theta)\in\mathcal M_w(k)\).

Conversely, given \((g,\theta)\in\mathcal M_w(k)\), define
\[
a_i=
\begin{cases}
g_i, & \text{if }g_i\ne g_{i-1},\\
\theta_i, & \text{if }g_i=g_{i-1}.
\end{cases}
\]
The permutation condition for \(\mathcal M_w(k)\) implies that
\((a_1,\ldots,a_r)\) is a permutation of \([r+1]\setminus\{k\}\).
Moreover, the decoration inequalities force equality steps to be precisely
non-record steps, while strict steps are precisely records.  Thus the two
constructions are inverse to each other and preserve the weight
\(p^{|S_+|}q^{|S_-|}\).

This bijection gives the recurrences $R_\varnothing^{(p,q)}(1)=1$ and for \(v\in\{+,-\}^{r-1}\),
\begin{align}
R_{+v}^{(p,q)}(k)
&=(k-1)R_v^{(p,q)}(k-1)
  +p\sum_{j=k}^{r}R_v^{(p,q)}(j),
\label{eq:R-plus-signed}\\
R_{-v}^{(p,q)}(k)
&=(r+1-k)R_v^{(p,q)}(k)
  +q\sum_{j=1}^{k-1}R_v^{(p,q)}(j),
\label{eq:R-minus-signed}
\end{align}
with the boundary convention \(R_v^{(p,q)}(0)=R_v^{(p,q)}(r+1)=0\), so that the
out-of-range terms \((k-1)R_v^{(p,q)}(k-1)\) at \(k=1\) and
\((r+1-k)R_v^{(p,q)}(k)\) at \(k=r+1\) vanish.  For instance, in the
\(+\)-case, a first inspected value below \(k\) is a
non-record; after deleting it and standardizing, the threshold has rank
\(k-1\), giving the first term.  A first inspected value above \(k\) is a
record and contributes the factor \(p\); after deleting the old threshold and
standardizing, the new threshold has rank \(j\), where \(k\le j\le r\).  The
\(-\)-case is analogous.

The Bernstein expansion now follows by induction on \(r=|w|\).  Substitute
the expansion for the suffix \(v\) into the defining recurrence for
\(K_{+v}^{(p,q)}\) or \(K_{-v}^{(p,q)}\), and apply the four degree-raising
identities in \Cref{lem:degree-raising}.  The coefficient of \(B_{r,k}\) is
respectively the right-hand side of \eqref{eq:R-plus-signed} or
\eqref{eq:R-minus-signed}.  This is the same coefficient comparison as in
\Cref{lem:rank-packaging}, with the two record directions carrying separate
weights \(p\) and \(q\).

Finally, \(\int_0^1B_{r,k}(x)\dd x=1/(r+1)\) by the same beta integral as for
\(b_{m,k}\) above.  Therefore
\[
(r+1)!\int_0^1K_w^{(p,q)}(x)\dd x
=
\sum_{k=1}^{r+1}R_w^{(p,q)}(k).
\]
When \(w=\eps_{n-1}\cdots\eps_1\), reading a permutation
\(\pi\in S_n\) with terminal value \(\pi_n=k\) from right to left gives a
permutation of \([n]\setminus\{k\}\) with comparison word \(w\).  The bijection
above identifies its \(+\)-records and \(-\)-records with strict \(+\)- and
strict \(-\)-steps of the decorated endpoint path.  Summing over \(k\) gives
the direction-refined identity.  The order-polynomial specialization follows
from \Cref{thm:main-general}, since the terminal record contributes one additional
factor \(t\).
\end{proof}

\subsection{Record-set fibers and record posets}
\label{sec:record-set-fibers}

We refine the greedy statistic by its full record set.  For \(\pi\in S_n\),
recall that \(\operatorname{Rec}_\eps(\pi)\)
is the set of greedy \(\eps\)-record positions of \(\pi\).  Thus \(n\in
\operatorname{Rec}_\eps(\pi)\) and \(\rec_\eps(\pi)=
|\operatorname{Rec}_\eps(\pi)|\).

Fix \(R\subseteq[n]\) with \(n\in R\).  For \(i<n\), set
\[
\rho_R(i)=\min\{r\in R:r>i\}.
\]
Thus \(\rho_R(i)\) is the nearest element of \(R\) to the right of \(i\).

\begin{definition}\label[definition]{def:record-tree}
The record poset \(Q_{\eps,R}\) is the labeled poset on \([n]\) obtained as
follows.  For each \(i<n\), impose the relation
\[
 i<_{Q_{\eps,R}}\rho_R(i)
\]
if
\[
(\eps_i=+\text{ and }i\notin R)
\quad\text{or}\quad
(\eps_i=-\text{ and }i\in R),
\]
and otherwise impose the opposite relation
\[
 \rho_R(i)<_{Q_{\eps,R}} i.
\]
Then take the transitive closure.
\end{definition}

This indeed defines a poset.  The undirected graph formed by the defining
edges has one edge \(\{i,\rho_R(i)\}\) for each \(i<n\).  Iterating
\(i\mapsto\rho_R(i)\) always reaches \(n\), so the graph is connected; since
it has \(n-1\) edges, it is a tree.  Hence no orientation of these edges can
contain a directed cycle.

\begin{example}\label[example]{ex:record-tree}
Let \(\eps=(+,-,+)\) and \(\pi=2413\).  The terminal value is \(3\), and none
of the values \(1,4,2\), inspected from right to left, resets the threshold.
Thus \(\operatorname{Rec}_\eps(\pi)=\{4\}\).  Since
\(\rho_{\{4\}}(i)=4\) for \(i=1,2,3\), the defining relations of the record
tree are
\[
1<_{Q_{\eps,\{4\}}}4,
\qquad
4<_{Q_{\eps,\{4\}}}2,
\qquad
3<_{Q_{\eps,\{4\}}}4.
\]
The inverse word is \(\pi^{-1}=3142\), which respects all three relations.
\end{example}

\begin{lemma}\label[lemma]{lem:record-set-linear-extensions}
For every \(\pi\in S_n\) and every \(R\subseteq[n]\) with \(n\in R\),
\[
\operatorname{Rec}_\eps(\pi)=R
\quad\Longleftrightarrow\quad
\pi^{-1}\in\mathcal L(Q_{\eps,R}),
\]
where \(\mathcal L(Q_{\eps,R})\) denotes the set of linear extensions of
\(Q_{\eps,R}\).
\end{lemma}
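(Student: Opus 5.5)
The plan is to unwind the threshold scan of \Cref{prop:chain-threshold-equivalence} position by position. The key observation is that when position \(i\) is inspected, the current threshold is the value at the nearest record to its right. First I fix the convention: \(\pi^{-1}\in\mathcal L(Q_{\eps,R})\) means that \(a<_{Q_{\eps,R}}b\) implies \(\pi_a<\pi_b\), because \(\pi^{-1}\) lists positions in increasing order of value. \Cref{ex:record-tree} confirms this reading. A linear order extends the transitive closure exactly when it extends the generating relations. So it suffices to check, for each \(i<n\), the single defining relation between \(i\) and \(\rho_R(i)\).

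Next I rewrite the defining relations as comparisons of values. For each \(i<n\), the defining relation of \(Q_{\eps,R}\) says the following:
\begin{itemize}
\item if \(\eps_i=+\) and \(i\notin R\), then \(\pi_i<\pi_{\rho_R(i)}\);
\item if \(\eps_i=-\) and \(i\in R\), then \(\pi_i<\pi_{\rho_R(i)}\);
\item if \(\eps_i=+\) and \(i\in R\), then \(\pi_i>\pi_{\rho_R(i)}\);
\item if \(\eps_i=-\) and \(i\notin R\), then \(\pi_i>\pi_{\rho_R(i)}\).
\end{itemize}
Since the values of \(\pi\) are distinct, these say exactly that \(i\) satisfies the record condition relative to the threshold \(h=\pi_{\rho_R(i)}\) if and only if \(i\in R\).

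\emph{Forward direction.} Assume \(\operatorname{Rec}_\eps(\pi)=R\). In the scan, the threshold is reset only at records. So when \(i\) is inspected, the threshold equals \(\pi_r\), where \(r\) is the smallest record greater than \(i\), and this \(r\) is \(\rho_R(i)\). Whether \(i\) is a record is then decided by comparing \(\pi_i\) with \(\pi_{\rho_R(i)}\) in direction \(\eps_i\). Reading the four cases above gives every defining relation, hence \(\pi^{-1}\in\mathcal L(Q_{\eps,R})\).

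\emph{Converse.} Assume all defining relations hold. I prove by descending induction on \(i\) that the scan declares \(i\) a record if and only if \(i\in R\). The base case is \(i=n\), which is always a record, and \(n\in R\). For the inductive step, suppose the claim holds for all positions in \((i,n]\). Then the records to the right of \(i\) are exactly \(R\cap(i,n]\), so the threshold at step \(i\) is \(\pi_{\rho_R(i)}\). The relation at \(i\) then forces the record decision to agree with membership of \(i\) in \(R\). Hence \(\operatorname{Rec}_\eps(\pi)=R\).

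The only delicate point is the identification of the threshold at step \(i\) with \(\pi_{\rho_R(i)}\). This relies on the induction hypothesis, or on the assumption \(\operatorname{Rec}_\eps(\pi)=R\), to know the records to the right of \(i\). Once that is in place, the remainder is the four-case sign check above.
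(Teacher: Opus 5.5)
Your proposal is correct and follows essentially the same route as the paper's proof. Both identify the threshold at position \(i\) with \(\pi_{\rho_R(i)}\) and translate each defining relation of \(Q_{\eps,R}\) into the record comparison, using the fact that \(a\) precedes \(b\) in \(\pi^{-1}\) iff \(\pi_a<\pi_b\); the converse is the same descending induction on \(i\).
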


\begin{proof}
Assume first that \(\operatorname{Rec}_\eps(\pi)=R\).  When the
right-to-left scan reaches a position \(i<n\), the current threshold is
\(\pi_{\rho_R(i)}\), because \(\rho_R(i)\) is the closest record position to
the right of \(i\).  Since the entries of \(\pi\) are distinct, the condition
that \(i\) is, or is not, declared a record is equivalent to
\[
\pi_i<\pi_{\rho_R(i)}
\quad\Longleftrightarrow\quad
(\eps_i=+\text{ and }i\notin R)
\quad\text{or}\quad
(\eps_i=-\text{ and }i\in R).
\]
This is exactly the comparison encoded by the defining relation between
\(i\) and \(\rho_R(i)\) in \(Q_{\eps,R}\).

Now \(\pi^{-1}\), written in one-line notation, is the word of positions of
\(1,2,\ldots,n\) in \(\pi\).  Therefore a position \(a\) occurs before a
position \(b\) in \(\pi^{-1}\) if and only if \(\pi_a<\pi_b\).  Hence the
comparisons above say precisely that \(\pi^{-1}\) respects every defining
relation of \(Q_{\eps,R}\), and hence is a linear extension.

Conversely, suppose \(\pi^{-1}\in\mathcal L(Q_{\eps,R})\).  We show by
downward induction on \(i\) that the greedy scan of \(\pi\) has record set
exactly \(R\).  The terminal position \(n\) lies in \(R\) and is always a
record.  Assume that every position \(>i\) has been scanned and that the
record positions among them are exactly \(R\cap\{i+1,\ldots,n\}\); then the
current threshold when position \(i\) is reached is \(\pi_{\rho_R(i)}\).
Because \(\pi^{-1}\) is a linear extension, the relation between \(i\) and
\(\rho_R(i)\) in \(Q_{\eps,R}\) holds, which by the displayed equivalence
says that \(i\) satisfies the record comparison exactly when \(i\in R\).
Hence \(i\) is declared a record if and only if \(i\in R\), completing the
induction.  Therefore the greedy record set is exactly \(R\).
\end{proof}

\subsection{The record-set Bernstein refinement}

For \(R\subseteq[n]\) with \(n\in R\), set
\[
R_\eps^+(R)=\{i\in R\cap[n-1]:\eps_i=+\},\qquad
R_\eps^-(R)=\{i\in R\cap[n-1]:\eps_i=-\},
\]
and write \(\mathbf y^A=\prod_{i\in A}y_i\).  For a labeled poset \(Q\) on
\([n]\) and \(1\le k\le n\), define
\[
\mathcal L_k(Q)
 =\{\sigma=\sigma_1\cdots\sigma_n\in\mathcal L(Q):\sigma_k=n\}.
\]
For the record poset, define
\[
\mathfrak S_{\eps,R}(k)
 =\{\pi\in S_n:\operatorname{Rec}_\eps(\pi)=R,\ \pi_n=k\}
\]
and
\[
\ell_{\eps,R}(k)=|\mathcal L_k(Q_{\eps,R})|.
\]

Attach a separate variable to the record status of every inspected position.
If \(w=w_1\cdots w_r\), \(\mathbf z=(z_1,\ldots,z_r)\), and
\(\mathbf z'=(z_2,\ldots,z_r)\), define
\(\mathscr K_w^{(p,q)}(x;\mathbf z)\) by
\begin{align*}
\mathscr K_\varnothing^{(p,q)}(x;\varnothing)&=1,\\
\mathscr K_{+v}^{(p,q)}(x;z_1,\mathbf z')
 &=x\mathscr K_v^{(p,q)}(x;\mathbf z')
   +p z_1\int_x^1\mathscr K_v^{(p,q)}(u;\mathbf z')\dd u,\\
\mathscr K_{-v}^{(p,q)}(x;z_1,\mathbf z')
 &=(1-x)\mathscr K_v^{(p,q)}(x;\mathbf z')
   +q z_1\int_0^x\mathscr K_v^{(p,q)}(u;\mathbf z')\dd u.
\end{align*}
Thus \(\mathscr K_w^{(p,q)}(x;1,\ldots,1)=K_w^{(p,q)}(x)\).

For \((g,\theta)\in\mathcal M_w(k)\), put
\(J(g)=S_+(g)\cup S_-(g)\).  When \(n=r+1\) and
\(w=\eps_{n-1}\cdots\eps_1\), define
\[
\mathcal M_w(k;R)
 =\bigl\{(g,\theta)\in\mathcal M_w(k):
          \{n-j:j\in J(g)\}=R\setminus\{n\}\bigr\}.
\]

\begin{theorem}[Record-set Bernstein transfer]
\label{thm:record-set-master-transfer}
Let \(n\ge1\), \(\eps\in\{+,-\}^{n-1}\), and
\(w=\eps_{n-1}\cdots\eps_1\).  For every \(R\subseteq[n]\) with \(n\in R\)
and every \(k\in[n]\), there are explicit bijections
\[
\mathcal M_w(k;R)
 \longleftrightarrow \mathfrak S_{\eps,R}(k)
 \longleftrightarrow \mathcal L_k(Q_{\eps,R}).
\]
In particular,
\[
|\mathcal M_w(k;R)|
 =|\mathfrak S_{\eps,R}(k)|
 =\ell_{\eps,R}(k).
\]
Moreover, with \(r=n-1\),
\begin{align}
&r!\,\mathscr K_w^{(p,q)}
  (x;y_{n-1},y_{n-2},\ldots,y_1) \notag\\
&\quad=
\sum_{k=1}^{n}\ \sum_{\substack{R\subseteq[n]\\n\in R}}
 \ell_{\eps,R}(k)
 p^{|R_\eps^+(R)|}q^{|R_\eps^-(R)|}
 \mathbf y^{R\setminus\{n\}}B_{r,k}(x).
\label{eq:record-set-master-bernstein}
\end{align}
Consequently,
\begin{align}
&y_n n!\int_0^1
 \mathscr K_w^{(p,q)}(x;y_{n-1},\ldots,y_1)\dd x \notag\\
&\quad=
\sum_{\pi\in S_n}
 \mathbf y^{\operatorname{Rec}_\eps(\pi)}
 p^{\rec_\eps^+(\pi)}q^{\rec_\eps^-(\pi)} \notag\\
&\quad=
\sum_{\substack{R\subseteq[n]\\n\in R}}
 \mathbf y^R p^{|R_\eps^+(R)|}q^{|R_\eps^-(R)|}
 |\mathcal L(Q_{\eps,R})|.
\label{eq:record-set-master-integrated}
\end{align}
\end{theorem}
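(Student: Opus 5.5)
The plan is to assemble the theorem from three pieces already in place: the scan-order bijection in the proof of \Cref{thm:decorated-signed-transfer}, \Cref{lem:record-set-linear-extensions}, and the Bernstein induction of \Cref{lem:rank-packaging}, now run with position-marked weights.

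\textbf{First bijection.} Fix \(k\) and \(R\). Identify \(\pi\in S_n\) having \(\pi_n=k\) with the scan order \(\alpha=(\pi_{n-1},\ldots,\pi_1)\) of \([n]\setminus\{k\}\). The decorated-path bijection \(\alpha\mapsto(g,\theta)\), with \(g_i=h_i\) and \(\theta_i=a_i\) on non-record steps, sends record steps \(i\) exactly to strict steps \(i\in J(g)\). Scan time \(i\) is position \(n-i\), so \(\operatorname{Rec}_\eps(\pi)\setminus\{n\}=\{n-j:j\in J(g)\}\). Restricting that bijection therefore gives \(\mathfrak S_{\eps,R}(k)\leftrightarrow\mathcal M_w(k;R)\).

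\textbf{Second bijection.} By \Cref{lem:record-set-linear-extensions}, \(\pi\mapsto\pi^{-1}\) maps permutations with record set \(R\) bijectively onto \(\mathcal L(Q_{\eps,R})\). The condition \(\pi_n=k\) means that position \(n\) holds value \(k\), that is, \((\pi^{-1})_k=n\). Hence \(\pi\mapsto\pi^{-1}\) restricts to a bijection \(\mathfrak S_{\eps,R}(k)\to\mathcal L_k(Q_{\eps,R})\), and the cardinality equalities follow.

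\textbf{The Bernstein identity \eqref{eq:record-set-master-bernstein}.} Define the refined finite sum
\[
\mathscr R_w(k;\mathbf z)=\sum_{\alpha}\prod_{\text{record steps }i}(p\text{ or }q)\,z_i,
\]
where the factor is \(p\) at a \(+\)-record and \(q\) at a \(-\)-record, and \(\alpha\) ranges over scan orders of \([r+1]\setminus\{k\}\). I would prove
\[
r!\,\mathscr K_w^{(p,q)}(x;\mathbf z)=\sum_k \mathscr R_w(k;\mathbf z)B_{r,k}(x)
\]
by induction on \(r\). The first-step decomposition in \eqref{eq:R-plus-signed} and \eqref{eq:R-minus-signed} goes through verbatim. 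The first inspected step carries the variable \(z_1\) exactly when it is a record, and the tail is a scan with word \(v\) and variables \(\mathbf z'\). This gives the recurrences with \(p\) replaced by \(pz_1\) and \(q\) by \(qz_1\). The inductive step then uses the degree-raising identities of \Cref{lem:degree-raising} and the same coefficient comparison as in \Cref{lem:rank-packaging}.

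Now substitute \(z_i=y_{n-i}\). Scan time \(i\) is position \(n-i\), so a term of \(\mathscr R_w(k;\mathbf y)\) from a permutation with record set \(R\) contributes \(\mathbf y^{R\setminus\{n\}}p^{|R^+_\eps(R)|}q^{|R^-_\eps(R)|}\). Here the signs match because \(w_i=\eps_{n-i}\). Grouping the permutations by \(R\) and using \(|\mathfrak S_{\eps,R}(k)|=\ell_{\eps,R}(k)\) gives \eqref{eq:record-set-master-bernstein}.

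\textbf{The integrated identity \eqref{eq:record-set-master-integrated}.} Integrate over \([0,1]\) using \(\int_0^1B_{r,k}=1/(r+1)=1/n\), then multiply by \(n\) and by \(y_n\) to account for the terminal record. The first equality is the sum over \(k\) of the grouped expression. The second uses \(\sum_k\ell_{\eps,R}(k)=|\mathcal L(Q_{\eps,R})|\), since \(n\) occupies exactly one position in each linear extension.

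\textbf{Main obstacle.} The only delicate point is bookkeeping: the variable indices, the word reversal, and the direction weights must all stay aligned, so that \(z_1\) attaches to position \(n-1\) and the sign of each scan step matches \(\eps_{n-i}\). The analytic content is unchanged from earlier. I would check the alignment explicitly on the base case and on a single inductive step.
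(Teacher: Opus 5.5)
Your proposal is correct and follows essentially the same route as the paper. You restrict the decorated-path bijection from the proof of \Cref{thm:decorated-signed-transfer} to obtain $\mathcal M_w(k;R)\leftrightarrow\mathfrak S_{\eps,R}(k)$, compose it with inversion via \Cref{lem:record-set-linear-extensions} using $\pi_n=k\iff(\pi^{-1})_k=n$, and rerun the first-step Bernstein induction with $p,q$ replaced by $pz_1,qz_1$ before substituting $z_j=y_{n-j}$ and integrating.
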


\begin{proof}
Fix \(k\), and write the entries inspected from right to left as
\(\alpha=(a_1,\ldots,a_r)\), where \(a_j=\pi_{n-j}\).  Thus \(\alpha\) is a
permutation of \([n]\setminus\{k\}\).  The bijection in the proof of
\Cref{thm:decorated-signed-transfer} sends \(\alpha\) to its threshold history
\(g\) and stores \(a_j\) as \(\theta_j\) precisely when step \(j\) is not a
record.  A strict step at time \(j\) is equivalent to a record at position
\(n-j\).  Hence that bijection restricts to
\[
\mathcal M_w(k;R)\longleftrightarrow\mathfrak S_{\eps,R}(k).
\]
By \Cref{lem:record-set-linear-extensions}, the map
\(\pi\mapsto\pi^{-1}\) sends the latter set bijectively to linear extensions
of \(Q_{\eps,R}\).  Finally,
\[
\pi_n=k\quad\Longleftrightarrow\quad (\pi^{-1})_k=n,
\]
so its image is exactly \(\mathcal L_k(Q_{\eps,R})\).

For the Bernstein expansion, attach the additional weight \(z_j\) to a
record at scan time \(j\) in the proof of
\Cref{thm:decorated-signed-transfer}.  The same first-step recurrence, with
\(p,q\) replaced by \(pz_j,qz_j\), gives
\[
r!\,\mathscr K_w^{(p,q)}(x;\mathbf z)
 =\sum_{k=1}^{r+1}A_w(k;\mathbf z)B_{r,k}(x).
\]
Here \(A_w(k;\mathbf z)\) is the weighted generating polynomial of scan
orders on \([r+1]\setminus\{k\}\).  Taking \(z_j=y_{n-j}\), the two
bijections above give
\[
A_w(k;y_{n-1},\ldots,y_1)
 =\sum_{\substack{R\subseteq[n]\\n\in R}}
 \ell_{\eps,R}(k)
 p^{|R_\eps^+(R)|}q^{|R_\eps^-(R)|}
 \mathbf y^{R\setminus\{n\}},
\]
which proves \eqref{eq:record-set-master-bernstein}.  Integrating and using
\(\int_0^1B_{r,k}(x)\dd x=1/n\) gives
\eqref{eq:record-set-master-integrated}, since
\(\sum_k\ell_{\eps,R}(k)=|\mathcal L(Q_{\eps,R})|\).
\end{proof}

\subsection{A \texorpdfstring{\(P\)}{P}-partition consequence}
\label{sec:qsym-record-lift}

For a word \(\sigma=\sigma_1\cdots\sigma_n\) with distinct letters, set
\(\operatorname{Des}(\sigma)=\{i:\sigma_i>\sigma_{i+1}\}\).  For
\(S\subseteq[n-1]\), let
\[
F_{S,n}(\mathbf x)
=
\sum_{\substack{1\le a_1\le\cdots\le a_n\\
                 a_i<a_{i+1}\text{ for }i\in S}}
x_{a_1}\cdots x_{a_n}
\]
be Gessel's fundamental quasisymmetric function.  For a labeled poset \(Q\)
on \([n]\), define its pointed enumerators
\[
\Gamma_k(Q)=
\sum_{\sigma\in\mathcal L_k(Q)}
F_{\operatorname{Des}(\sigma),n}(\mathbf x).
\]
Their sum over \(k\) is the usual fundamental \(P\)-partition enumerator
\cite{StanleyPPartitions,Gessel1984}.

For \(k\in[n]\), define
\[
\mathcal R^\bullet_{\eps,k}(\mathbf x;\mathbf y;u,v)
=
\sum_{\substack{\pi\in S_n\\\pi_n=k}}
\mathbf y^{\operatorname{Rec}_\eps(\pi)\setminus\{n\}}
u^{\rec_\eps^+(\pi)}v^{\rec_\eps^-(\pi)}
F_{\operatorname{Des}(\pi^{-1}),n}(\mathbf x).
\]
\begin{corollary}[Fixed-terminal \(P\)-partition lift]
\label[corollary]{cor:qsym-record-lift}
For every \(k\in[n]\),
\[
\mathcal R^\bullet_{\eps,k}
=
\sum_{\substack{R\subseteq[n]\\n\in R}}
\mathbf y^{R\setminus\{n\}}
u^{|R_\eps^+(R)|}v^{|R_\eps^-(R)|}
\Gamma_k(Q_{\eps,R}).
\]
\end{corollary}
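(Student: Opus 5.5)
The identity should follow by regrouping the defining sum of \(\mathcal R^\bullet_{\eps,k}\) according to the record set, and then applying the bijection \(\pi\mapsto\pi^{-1}\) from \Cref{thm:record-set-master-transfer} fiber by fiber.

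First, partition \(\{\pi\in S_n:\pi_n=k\}\) by the value of \(\operatorname{Rec}_\eps(\pi)\). This set always contains \(n\), so the blocks are exactly the sets \(\mathfrak S_{\eps,R}(k)\) with \(R\subseteq[n]\) and \(n\in R\). On a fixed block the record statistics are constant:
\[
\mathbf y^{\operatorname{Rec}_\eps(\pi)\setminus\{n\}}=\mathbf y^{R\setminus\{n\}},\qquad
\rec_\eps^+(\pi)=|R_\eps^+(R)|,\qquad
\rec_\eps^-(\pi)=|R_\eps^-(R)|.
\]
The last two hold because, by \Cref{def:records-chain}, \(\rec_\eps^{\pm}\) counts the non-terminal records \(i_j\) (\(j\ge1\)) with \(\eps_{i_j}=\pm\), and these form precisely \(R\cap[n-1]\) split by sign. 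Hence \(\mathcal R^\bullet_{\eps,k}\) becomes a sum over \(R\) of these constant monomials times
\[
\sum_{\pi\in\mathfrak S_{\eps,R}(k)}F_{\operatorname{Des}(\pi^{-1}),n}(\mathbf x).
\]

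Second, identify each inner sum with \(\Gamma_k(Q_{\eps,R})\). By \Cref{thm:record-set-master-transfer}, which rests on \Cref{lem:record-set-linear-extensions} together with \(\pi_n=k\iff(\pi^{-1})_k=n\), the map \(\pi\mapsto\sigma=\pi^{-1}\) is a bijection \(\mathfrak S_{\eps,R}(k)\to\mathcal L_k(Q_{\eps,R})\). It carries \(\operatorname{Des}(\pi^{-1})\) to \(\operatorname{Des}(\sigma)\) tautologically. Reindexing the inner sum by \(\sigma\) therefore gives exactly the definition of \(\Gamma_k(Q_{\eps,R})\), and the corollary follows.

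The only point needing care is a convention check rather than a real obstacle. The descent set must be taken of the linear extension written as a word \(\sigma_1\cdots\sigma_n\) of elements of \([n]\), with \(\sigma_k=n\). This matches the orientation used in \Cref{lem:record-set-linear-extensions}, where \(\pi^{-1}\) lists the positions of \(1,\ldots,n\), and the definition of \(\mathcal L_k\). So no reversal or complementation of descents intervenes. Summing over \(k\) then recovers the usual fundamental \(P\)-partition enumerators weighted by record data, which serves as a consistency check.
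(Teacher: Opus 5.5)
Your proposal is correct and follows essentially the same route as the paper: group the defining sum by the record set $R$, then use inversion $\pi\mapsto\pi^{-1}$ together with $\pi_n=k\iff(\pi^{-1})_k=n$ to identify each fiber $\mathfrak S_{\eps,R}(k)$ with $\mathcal L_k(Q_{\eps,R})$ via \Cref{lem:record-set-linear-extensions}. Your explicit check that $\rec_\eps^{\pm}$ and $\mathbf y^{\operatorname{Rec}_\eps(\pi)\setminus\{n\}}$ are constant on each fiber, and that the descent convention needs no reversal, fills in details the paper leaves implicit.
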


\begin{proof}
By \Cref{lem:record-set-linear-extensions}, inversion maps the permutations
with record set \(R\) bijectively to \(\mathcal L(Q_{\eps,R})\).  Moreover,
\(\pi_n=k\) if and only if \((\pi^{-1})_k=n\), so the fixed-terminal fiber
maps to \(\mathcal L_k(Q_{\eps,R})\).  Grouping the defining sum by \(R\)
gives the identity.
\end{proof}

Summing over \(k\) and applying the standard stable principal specialization
\(\operatorname{ps}_q:x_i\mapsto q^{i-1}\), for which
\(\operatorname{ps}_q(F_{S,n})
=q^{\operatorname{comaj}_n(S)}/(q;q)_n\), where
\(\operatorname{comaj}_n(S)=\sum_{i\in S}(n-i)\), gives
\[
(q;q)_n\operatorname{ps}_q\!\left(
y_n\sum_{k=1}^n\mathcal R^\bullet_{\eps,k}\right)=
\sum_{\pi\in S_n}
q^{\operatorname{comaj}_n(\operatorname{Des}(\pi^{-1}))}
\mathbf y^{\operatorname{Rec}_\eps(\pi)}
u^{\rec_\eps^+(\pi)}v^{\rec_\eps^-(\pi)}.
\]
Here \((q;q)_n=\prod_{i=1}^n(1-q^i)\).

\section{Record caterpillars and terminal values}
\label{sec:record-tree-spectrum}

Atkinson's algorithm for posets whose cover graph is a tree proceeds through the
position spectrum of a distinguished element and runs in quadratic time
\cite{AtkinsonTree}.  Record posets have caterpillar cover graphs, so its
recursion has the following explicit form in record-gap coordinates.  By
\Cref{thm:record-set-master-transfer}, this distinguished-element spectrum is
the terminal-value distribution in a fixed record-set fiber.

\begin{lemma}[Record-caterpillar structure]
\label[lemma]{lem:record-caterpillar}
Write \(R=\{r_1<\cdots<r_s=n\}\) and set \(r_0=0\).  The undirected Hasse
diagram of \(Q_{\eps,R}\) is a caterpillar with spine
\[
r_1-r_2-\cdots-r_s.
\]
For \(r_{j-1}<i<r_j\), the vertex \(i\) is a leaf adjacent to \(r_j\), and
\[
i<_{Q_{\eps,R}}r_j\quad\Longleftrightarrow\quad\eps_i=+.
\]
For \(2\le j\le s\), the orientation of the \(j\)-th spine edge is
\[
r_{j-1}<_{Q_{\eps,R}}r_j
 \quad\Longleftrightarrow\quad \eps_{r_{j-1}}=-.
\]
\end{lemma}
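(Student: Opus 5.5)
The plan is to read everything off \Cref{def:record-tree}, which already says that the defining graph is a tree with edges \(\{i,\rho_R(i)\}\), \(i<n\). The work is then in three steps: show that these edges are exactly the covers, show that the tree has the stated caterpillar shape, and compute the orientation of each edge by specializing the defining rule.

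\emph{Shape.} For \(r_{j-1}<i<r_j\) we have \(i\notin R\) and \(\rho_R(i)=r_j\). No other index \(i'\) satisfies \(\rho_R(i')=i\), since \(\rho_R\) only takes values in \(R\). Hence \(i\) has degree one and is a leaf attached to \(r_j\). For \(j<s\), the record \(r_j\) has \(\rho_R(r_j)=r_{j+1}\). Removing all leaves therefore leaves the path \(r_1-r_2-\cdots-r_s\), which is the definition of a caterpillar with that spine.

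\emph{Orientations.} For a leaf \(i\notin R\), the rule gives \(i<\rho_R(i)=r_j\) exactly when \(\eps_i=+\), which is the second display. For a spine edge, take \(i=r_{j-1}\in R\) with \(\rho_R(i)=r_j\). The rule gives \(r_{j-1}<r_j\) exactly when \(\eps_{r_{j-1}}=-\), which is the third display.

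\emph{Covers.} The one point needing care is that the Hasse diagram of the transitive closure is exactly this tree. No defining edge may become a non-cover through a longer chain, and no new covers may appear. Every relation of \(Q_{\eps,R}\) comes from a directed path in the oriented tree. Suppose a defining edge \(a<b\) were not a cover. Then there would be some \(c\) with \(a<c<b\), giving directed paths \(a\to c\to b\) in the tree. Together with the edge \(a\to b\), these would form two distinct undirected paths between \(a\) and \(b\). That contradicts acyclicity unless the path \(a\to c\to b\) is just the edge itself, which forces \(c\in\{a,b\}\). So every defining edge is a cover. Conversely, every cover must be a single defining edge, since a longer directed path yields non-cover relations. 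Therefore the undirected Hasse diagram equals the defining tree.

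This last step is the only nonroutine one, and it follows directly from the tree structure established in \Cref{def:record-tree}.
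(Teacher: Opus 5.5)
Your proof is correct and follows essentially the same route as the paper's. Non-record vertices are leaves because \(\rho_R\) takes values only in \(R\), each spine edge comes from \(\rho_R(r_{j-1})=r_j\), and the orientations are read off \Cref{def:record-tree}. Your directed-path argument for the covers spells out the paper's one-line remark that no edge of a tree can become redundant under transitive closure.
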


\begin{proof}
If \(i\notin R\), then \(\rho_R(i)=r_j\) for the unique \(j\) satisfying
\(r_{j-1}<i<r_j\).  No defining edge can have such an \(i\) as its right
endpoint, so \(i\) is a leaf.  If \(i=r_{j-1}\in R\), then
\(\rho_R(i)=r_j\), producing the spine edge.  The two orientation statements
are immediate from \Cref{def:record-tree}.  The defining undirected graph is a
tree, so none of its edges can be made redundant by transitive closure;
therefore it is the undirected Hasse diagram.
\end{proof}

For \(1\le j\le s\), let \(Q_j\) be the subposet of \(Q_{\eps,R}\) induced by
\([r_j]\), and define the root spectrum
\[
h_j(k)=\#\{\sigma\in\mathcal L(Q_j):\sigma_k=r_j\}
\qquad(1\le k\le r_j).
\]
Also put
\[
a_j=\#\{i:r_{j-1}<i<r_j,\ \eps_i=+\},\qquad
b_j=\#\{i:r_{j-1}<i<r_j,\ \eps_i=-\}.
\]
Thus \(a_j+b_j=r_j-r_{j-1}-1\).

\begin{proposition}[Record-gap specialization of Atkinson's recursion]
\label{thm:record-tree-spectrum}
The initial spectrum is concentrated at one position:
\[
h_1(k)=
\begin{cases}
a_1!b_1!,&k=a_1+1,\\
0,&k\ne a_1+1.
\end{cases}
\]
For \(2\le j\le s\) and \(0\le d\le r_{j-1}\), define
\[
C_j(d)=
\begin{cases}
\displaystyle\sum_{t=1}^{d}h_{j-1}(t),
 &\eps_{r_{j-1}}=-,\\[3mm]
\displaystyle\sum_{t=d+1}^{r_{j-1}}h_{j-1}(t),
 &\eps_{r_{j-1}}=+.
\end{cases}
\]
Then
\begin{equation}
\label{eq:record-tree-spectrum-recursion}
h_j(d+a_j+1)
 =a_j!b_j!
  \binom{d+a_j}{a_j}
  \binom{r_{j-1}-d+b_j}{b_j}
  C_j(d).
\end{equation}
All other entries of \(h_j\) are zero.  In particular,
\[
\ell_{\eps,R}(k)=h_s(k),
\qquad
\#\{\pi\in S_n:\operatorname{Rec}_\eps(\pi)=R\}
 =\sum_{k=1}^{n}h_s(k).
\]
After factorials and binomial coefficients have been precomputed, the entire
vector \((h_s(1),\ldots,h_s(n))\) is obtained in \(O(n^2)\) arithmetic
operations.
\end{proposition}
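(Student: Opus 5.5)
We plan to prove the recursion by direct counting on the caterpillar of \Cref{lem:record-caterpillar}, building \(Q_j\) from \(Q_{j-1}\) by adjoining the new spine vertex \(r_j\) together with its leaves \(r_{j-1}<i<r_j\).

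\emph{Base case and structure.} The base case \(j=1\) is immediate. \(Q_1\) is a star centered at \(r_1\): the \(a_1\) leaves with \(\eps_i=+\) lie below \(r_1\), and the \(b_1\) leaves with \(\eps_i=-\) lie above it. In any linear extension, \(r_1\) therefore sits at position \(a_1+1\), and the two leaf groups are ordered freely, giving \(a_1!\,b_1!\) extensions.

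For \(j\ge2\), the vertex set of \(Q_j\) is \([r_{j-1}]\sqcup\{r_j\}\sqcup L_j\), where \(L_j\) is the set of new leaves. The only relations joining \(Q_{j-1}\) to the new part are the spine edge between \(r_{j-1}\) and \(r_j\) and the leaf edges attached to \(r_j\). Hence every relation between a vertex of \(Q_{j-1}\) and a new vertex factors through the spine edge. A linear extension of \(Q_j\) is therefore determined by the following data:
\begin{enumerate}[label=(\roman*)]
\item a linear extension \(\tau\) of \(Q_{j-1}\);
\item the number \(d\) of vertices of \(Q_{j-1}\) placed before \(r_j\);
\item the positions of the \(a_j\) lower leaves among the first \(d+a_j\) slots, relative to \(Q_{j-1}\), together with their order;
\item similarly, the positions and order of the \(b_j\) upper leaves among the \(r_{j-1}-d+b_j\) slots after \(r_j\).
\end{enumerate}
With these choices, \(r_j\) occupies position \(d+a_j+1\), and the leaf choices contribute \(a_j!\binom{d+a_j}{a_j}\,b_j!\binom{r_{j-1}-d+b_j}{b_j}\). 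Here we use that the lower leaves are only required to precede \(r_j\), and the upper leaves only to follow it.

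\emph{Spine constraint.} The remaining condition is the spine relation. By \Cref{lem:record-caterpillar}, \(r_{j-1}<r_j\) exactly when \(\eps_{r_{j-1}}=-\). In that case \(r_{j-1}\) must lie among the first \(d\) entries of \(\tau\), i.e.\ at a position \(t\le d\). Summing over \(\tau\) gives \(\sum_{t\le d}h_{j-1}(t)\). In the \(+\) case the condition is \(t>d\), giving \(\sum_{t>d}h_{j-1}(t)\). Both sums are exactly \(C_j(d)\).

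\emph{Main obstacle: bijectivity of the decomposition.} The step needing care is showing this decomposition is a bijection, in particular that the interleaving is arbitrary. The argument is that the restriction of a linear extension of \(Q_j\) to \([r_{j-1}]\) is a linear extension of \(Q_{j-1}\), because induced subposet relations are exactly those in \(Q_{j-1}\). Conversely, any shuffle respecting the spine and leaf constraints is a linear extension, since all other cross relations arise by transitivity through the spine edge. Positions outside \(\{d+a_j+1\}\) are unattainable, which accounts for the zero entries of \(h_j\).

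\emph{Consequences and complexity.} Since \(Q_s=Q_{\eps,R}\), we get \(\ell_{\eps,R}(k)=h_s(k)\). Summing over \(k\) and applying \Cref{thm:record-set-master-transfer} yields the count of \(\{\pi:\operatorname{Rec}_\eps(\pi)=R\}\). For the complexity, each \(C_j\) is obtained from prefix or suffix sums in \(O(r_{j-1})\) operations, and each \(h_j\) vector then costs \(O(r_j)\). The total is \(O(n^2)\).
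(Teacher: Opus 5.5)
Your proposal is correct and follows essentially the same route as the paper. You fix a linear extension \(\tau\) of \(Q_{j-1}\), split it at the insertion point of \(r_j\), use the spine orientation to count admissible \(\tau\) as \(C_j(d)\), interleave the lower and upper leaves independently, and justify reversibility by restriction; the final identities and the \(O(n^2)\) bound via prefix/suffix sums are obtained exactly as in the paper.
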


\begin{proof}
For \(j=1\), the \(a_1\) lower leaves must occur before \(r_1\), the \(b_1\)
upper leaves must occur after \(r_1\), and the leaves within either group may
be ordered arbitrarily.  This gives the stated initial spectrum.

Fix \(j\ge2\) and a linear extension \(\tau\) of \(Q_{j-1}\).  When \(r_j\)
is inserted, let \(d\) be the number of entries of \(\tau\) placed before
\(r_j\).  If \(r_{j-1}<_{Q_{\eps,R}}r_j\), the entry \(r_{j-1}\) must belong
to this prefix, so its position in \(\tau\) is at most \(d\).  If
\(r_j<_{Q_{\eps,R}}r_{j-1}\), it must belong to the suffix, so its position is
larger than \(d\).  By \Cref{lem:record-caterpillar}, the number of admissible
choices of \(\tau\) is therefore exactly \(C_j(d)\).

For a fixed admissible \(\tau\), interleave its ordered prefix of length \(d\)
with the \(a_j\) distinct lower leaves.  This can be done in
\[
a_j!\binom{d+a_j}{a_j}
\]
ways.  Independently, interleave its ordered suffix of length
\(r_{j-1}-d\) with the \(b_j\) distinct upper leaves, in
\[
b_j!\binom{r_{j-1}-d+b_j}{b_j}
\]
ways.  The root \(r_j\) lies between these two words and hence occupies
position \(d+a_j+1\).  This construction is reversible by restricting a
linear extension of \(Q_j\) to \([r_{j-1}]\) and to the two leaf sets, so it
counts every extension exactly once and proves
\eqref{eq:record-tree-spectrum-recursion}.  The final identities follow from
\Cref{thm:record-set-master-transfer}.  Prefix or suffix sums compute every
array \(C_j\) in \(O(r_{j-1})\) operations, and summing over \(j\) gives the
stated quadratic bound.
\end{proof}

\begin{remark}[Terminal-value range]\label{cor:terminal-value-interval}
For any finite poset \(Q\) and \(x\in Q\), the possible positions of \(x\)
among the linear extensions form the interval
\[
\bigl[|Q_{<x}|+1,\ |Q|-|Q_{>x}|\bigr]\cap\mathbb Z.
\]
For the record caterpillar this gives integers \(L_j\le U_j\) such that
\[
h_j(k)>0\quad\Longleftrightarrow\quad L_j\le k\le U_j.
\]
In gap coordinates, \(L_1=U_1=a_1+1\) and
\[
(L_j,U_j)=
\begin{cases}
(a_j+L_{j-1}+1,\ r_j-b_j),&\eps_{r_{j-1}}=-,\\
(a_j+1,\ a_j+U_{j-1}),&\eps_{r_{j-1}}=+.
\end{cases}
\]
Consequently,
\[
\{\pi_n:\operatorname{Rec}_\eps(\pi)=R\}=[L_s,U_s]\cap\mathbb Z.
\]
\end{remark}

\begin{remark}[Gap-data invariance]\label{cor:block-count-invariance}
Fix \(R=\{r_1<\cdots<r_s=n\}\).  Suppose that
\(\eps,\eps'\in\{+,-\}^{n-1}\) have the same signs at
\(r_1,\ldots,r_{s-1}\) and the same number of \(+\)-signs in every open gap
\((r_{j-1},r_j)\).  Then, for every \(k\in[n]\),
\[
\#\{\pi:\operatorname{Rec}_\eps(\pi)=R,\ \pi_n=k\}
=
\#\{\pi:\operatorname{Rec}_{\eps'}(\pi)=R,\ \pi_n=k\}.
\]
Indeed, the hypotheses give an isomorphism of rooted record caterpillars:
match the spine vertices and, in each gap, match \(+\)-leaves to \(+\)-leaves
and \(-\)-leaves to \(-\)-leaves.  The isomorphism fixes the distinguished
vertex \(n\), so the spectra agree.
\end{remark}

\begin{example}\label[example]{ex:record-tree-spectrum}
Let \(\eps=(+,-,+)\) and \(R=\{1,2,4\}\).  The record poset has cover
relations
\[
2<1,\qquad 2<4,\qquad 3<4.
\]
The recursion gives
\[
\begin{aligned}
(h_1(1))&=(1),&
(h_2(1),h_2(2))&=(1,0),\\
(h_3(1),\ldots,h_3(4))&=(0,0,2,3).
\end{aligned}
\]
Indeed, its five linear extensions are
\[
2134,\quad2314,\quad2341,\quad3214,\quad3241.
\]
The corresponding inverse permutations are
\[
2134,\quad3124,\quad4123,\quad3214,\quad4213.
\]
Two have terminal value \(3\), three have terminal value \(4\), and all have
record set \(R\).  Thus the contribution of this fixed record set to
\eqref{eq:record-set-master-bernstein} is
\[
pq\,y_1y_2\bigl(2B_{3,3}(x)+3B_{3,4}(x)\bigr).
\]
\end{example}

\section{Concluding remarks}

The path construction is refined above by endpoint, record-set, direction,
and terminal-value data, while the cyclic construction replaces the
distinguished endpoint by a canonical root.  It remains open whether the
record-set Bernstein transfer and its \(P\)-partition consequence admit
cyclic analogues.

\section*{Acknowledgements}
The author thanks Neil J.~Y. Fan for drawing attention to Problem~5.3 of
Ferroni--Morales--Panova and for his encouragement, and Quanyu Tang for helpful
discussions and suggestions.

\end{document}